\newcounter{thm}
\newtheorem{theorem}[thm]{Theorem}
\numberwithin{thm}{section}
\newtheorem{definition}{Definition}
\newtheorem{prop}[thm]{Proposition}
\newtheorem{cor}[thm]{Corollary}
\newtheorem{lemma}[thm]{Lemma}
\newtheorem{algorithm}[thm]{Algorithm}
\newtheorem{remark}{Remark}
\newtheorem{example}{Example}
\newtheorem*{example*}{Example}
\def \Frob{\text{Frob}}
\def\F{\mathbb F}
\def\C{\mathbb C}
\def\H{\mathbb H}
\def\O{\mathcal O}
\def\Q{\mathbb Q}
\def\Im{\mathrm{Im\,}}
\def\Z{\mathbb Z}
\def \l {\lambda}
\def\({\left(}
\def\){\right)}
\def\G{\Gamma}
\newcommand*\HYPERskip{&}
\newcommand*\pFq{
	\begingroup
	\catcode`\,\active
	\def ,{\HYPERskip}%
	\doHyper
}
\def\doHyper#1#2#3#4#5{%
	\, _{#1}F_{#2}\left[\begin{matrix}#3 \smallskip \\  #4\end{matrix} \; ; \; #5\right]%
	\endgroup
}
\def \Tr {\text{Tr}}
\DeclareMathOperator{\SL}{SL}
\DeclareMathOperator{\ord}{ord}
\DeclareMathOperator{\univ}{univ}
\DeclareMathOperator{\Gal}{Gal}
\DeclareMathOperator{\Leg}{Leg}
\DeclareMathOperator{\Spec}{Spec}
\DeclareMathOperator{\Tate}{Tate}
\DeclareMathOperator{\Sym}{Sym}
\DeclareMathOperator{\coker}{coker}
\DeclareMathOperator{\Res}{Res}
\DeclareMathOperator{\id}{id}
\DeclareMathOperator{\Hdg}{Hdg}
\DeclareMathOperator{\an}{an}
\DeclareMathOperator{\End}{End}
\let\originalleft\left
\let\originalright\right
\renewcommand{\left}{\mathopen{}\mathclose\bgroup\originalleft}
\renewcommand{\right}{\aftergroup\egroup\originalright}
\title{Atkin and Swinnerton-Dyer congruences for meromorphic modular forms}
\keywords{ASD congruences, Meromorphic modular forms, de Rham cohomology, Elliptic curves}
\subjclass[2020]{11F30, 11F33, 11F37, 11G15, 14F40}
\address{Department of Mathematics and Computer Science, Wesleyan University, Middletown, CT 06459}
\email{mallen02@wesleyan.edu}
\address{Department of Mathematics, Louisiana State University, Baton Rouge, LA 70803}
\email{llong@lsu.edu}
\address{Department of Mathematics, Louisiana State University, Baton Rouge, LA 70803}
\email{hsaad@lsu.edu}
\author{Michael Allen, Ling Long, and Hasan Saad}
\date{}
\begin{document}

\begin{abstract}
In the 1970's, Atkin and Swinnerton-Dyer conjectured that Fourier coefficients of holomorphic modular cusp forms on noncongruence subgroups of $\SL_2(\Z)$ satisfy certain $p$-adic recurrence relations which are analogous to Hecke's recurrence relations for congrunece subgroups. In 1985, this was proven in seminal work of Scholl and it was recently extended to weakly holomorphic modular forms by Kazalicki and Scholl. We show that Atkin and Swinnerton-Dyer type congruences extend to the setting of meromorphic modular forms and that the $p$-adic recurrence relations arise from Scholl's congruences in addition to a contribution of fibers of universal elliptic curves at the poles. Moreover, when the poles are located at CM points, we exploit the CM structure to reduce these $p$-adic recurrence relations to $2$-term relations and we give explicit examples. Using this framework, we partially prove conjectures that certain meromorphic modular forms are magnetic.
\end{abstract}

\maketitle
\tableofcontents

\section{Introduction and statement of results}\label{sec:intro}

In the early 1900's, Ramanujan conjectured and Mordell proved \cite{MordellRamanujan} that if we consider the cusp form of weight $12$ on $\SL_2(\Z)$
$$
\Delta(\tau) := q\prod\limits_{n=1}^\infty(1-q^n)^{24}=:\sum\limits_{n=1}^\infty \tau(n)q^n,
$$
then for all primes $p$ and $n\geq 1,$ we have
\begin{equation}
\tau(pn)=\tau(n)\tau(p)-p^{11}\cdot\tau(n/p).
\end{equation}
This was later generalized by Hecke who built the theory of Hecke operators for congruence subgroups of $\SL_2(\Z).$ Using this theory, Hecke showed that the space $S_k(\SL_2(\Z))$ of weight $k$ cusp forms decomposes into simultaneous Hecke eigenforms $\displaystyle f(\tau)=\sum_{n \geq 0} a(n)q^n$
for which for all $n,s\geq 1$ and all primes $p,$ we have
\begin{equation}\label{eq:Hecke-recurrence}
    a(np^{s}) = a(p)a(np^{s-1}) - p^{k-1} a(np^{s-2}).
\end{equation} 

When the assumptions above are loosened---in particular, if we consider noncongruence subgroups of $\SL_2(\Z)$ or meromorphic modular forms---the resulting spaces of modular forms are no longer equipped with a nice Hecke theory and such recurrences no longer hold.  However, Atkin and Swinnerton-Dyer observed \cite{ASD} that $p$-adic analogues of \eqref{eq:Hecke-recurrence} hold for non-congruence subgroups. To make this precise, suppose that $\Gamma$ is a finite-index subgroup of $\SL_2(\Z)$ defined over $\Q$ (see 5.1 of \cite{Scholl2}) and denote by $S_k(\Gamma)$ the space of weight $k$ cusp forms on $\Gamma.$ If $\dim S_k(\Gamma)=1$ and $\mu$ denotes the cusp width of $i\infty$ in $\Gamma,$ choose a nonzero form $f(\tau)$ on $\Gamma$ such that
$$
f(\tau):=\sum\limits_{n=1}^\infty a(n)e^{\frac{2\pi in\tau}{\mu}}\in S_k(\Gamma)
$$
with $k^n\cdot a(n)\in\Z\left[\frac{1}{M}\right]$ for some $M\geq 1$ and $k\in\C$ satisfying $k^\mu\in\Z\left[\frac{1}{M}\right]^\ast.$ In this notation, for almost all primes $p,$ there exists an integer $A_p$ with $|A_p|\leq 2p^{\frac{k-1}{2}}$ and a Dirichlet character $\chi$ such that if $m,s\geq 1,$ then we have (see Theorem in Section 1 of \cite{Scholl2})
\begin{equation}\label{eq:ASD-congruence-dim1}
    a(mp^s) \equiv A_p a(mp^{s-1}) -\chi(p) p^{k-1} a(mp^{s-2}) \mod{p^{(k-1)s}}.
\end{equation}
In his seminal work, Scholl proved \cite{Scholl2} that an analogue of \eqref{eq:ASD-congruence-dim1} holds for any $d=\dim S_k(\Gamma)$. Moreover, in recent work, Kazalicki and Scholl extended \cite{KS16} this result for weakly holomorphic modular forms, that is, modular forms which are holomorphic on the upper half-plane with possible poles at the cusps. 

More recently, there has been growing interest in the arithmetic properties of Fourier coefficients of modular forms when the poles lie at CM points on the upper half-plane (for example, see  \cite{Yingkun}, \cite{PasolZudilin},\cite{Bonisch}, \cite{Bogo-Li-Schwagenscheidt},\cite{Pengcheng}) and the interest in these forms originated in part due to work of Broadhurst and Zudilin \cite{Broadhurst-Zudilin-magnetic}.  To give an example of such properties, we first set some notation. If $\tau\in\mathfrak{H}$, the upper-half of the complex plane,  and $q:=e^{2\pi i\tau},$ let  $\phi_0(\tau)$ be the unique normalized Eisenstein series of weight $2$ on $\Gamma_0(2),$  $A(\tau)$ a Hauptmodul on $\Gamma_0(2)$, and $E_4(\tau)$ the normalized Eisenstein series of weigh $4$ on $\SL_2(\Z),$ given by the following $q$-expansions:

\begin{align*}
    \phi_0(\tau)&:=1+24\sum\limits_{m=1}^\infty\frac{mq^m}{1+q^m}, \\
    A(\tau)&:=q\prod\limits_{m=1}^\infty (1+q^m)^{24}, \\
    E_4(\tau)&:= 1+240\sum\limits_{m=1}^\infty\left(\sum\limits_{d\mid m} d^3\right)q^m.
\end{align*}

Moreover, consider the meromorphic modular form $C_4(\tau)$ of weight $4$ on $\Gamma_0(2)$ given by
\begin{equation}\label{eq:C4}
    C_4(\tau) := \frac{A(\tau)}{3(1+64A(\tau))}(4\phi_0^2(\tau)-E_4(\tau))=:\sum\limits_{n=1}^\infty c(n)q^n.
\end{equation}
In this notation, B\"onisch, Duhr, and Maggio \cite[Section B.1]{Bonisch} conjecture that the coefficients $\frac{c(n)}{n}$ have globally bounded denominators. As a consequence of our main theorem, we show that for all primes $p\geq 3$ and $m,s\geq 1,$ we have (see Subsection~\ref{subsec:Bonsich}) 
$$
c(mp^s)\equiv p\cdot\left(\frac{-1}{p}\right)c(mp^{s-1})\pmod{p^{3s}},
$$
where $\left(\frac{-1}{p}\right)$ is the Legendre symbol. In particular, this yields the desired globally bound denominators property for all odd $n$.

To do this, we extend the work of Kazalicki and Scholl to include meromorphic modular forms. To make this precise, we first introduce some notation (see \cite{Sch85b} for this setup).

In what follows, we denote by $K$ an algebraic number field and by $\mathfrak{o}_K$ its ring of integers.  If $N\geq 3,$ consider the modular curve $X(N)$ over $\mathfrak{o}_K,$ let $Y(N)$ be the open subset of $X(N)$ parameterizing non-degenerate elliptic curves and let $Z(N)\subset X(N)$ be the cuspidal subscheme. At the cusp $\underline{\infty}',$ we denote the formal uniformizing parameter by $q^{1/N}.$

Now, let $\Gamma$ be a finite-index subgroup of $\SL_2(\Z)$ and let $X$ be a realization of $\Gamma\backslash\overline{\mathfrak{H}},$ where $\overline{\mathfrak{H}}$ is $\mathfrak{H}$ with the cusps adjoined. We assume that $X$ is defined over $R:=\mathfrak{o}_K\left[\frac{1}{M}\right]$ for some $M\geq 1, N\mid M$ and that $X$ is equipped with a finite morphism $\kappa:X\to X(N)\left[\frac{1}{M}\right]$\footnote{In this paper, $\kappa$ always denotes this map.} such that $\kappa$ is \'etale over $Y(N)$ and such that there is a section $\underline{\infty}:\Z[\frac{1}{M}]\to X$ lying above the cusp $\underline{\infty}'$ of $X(N).$ We denote by $Y$ and $Z$ the reduced inverse images of $Y(N)$ and $Z(N)$ respectively. We have that 
$$
\widehat{\O_{X,\underline{\infty}}}=R[[t]],
$$
where there exists $\nu\geq 1$ and $D\in R^\ast$ such that $D\cdot t^\nu=q.$ Moreover, let $p\nmid 2M$ be a prime such that $p$-adic completion $K_p$ of $K$ is an unramified extension of $\Q_p.$ If $\mathfrak{o}_{K_p}$ denotes the ring of integers of $K_p,$ then by Hensel's lemma there exists a unique 
\begin{equation}\label{eq:gammapDef}
\gamma_p\in 1+p\mathfrak{o}_{K_p}\ \ \ \text{ such that }\ \ \ \gamma_p^\nu=\frac{D^p}{\sigma(D)},
\end{equation}
where $\sigma$ is the Frobenius endomorphism of $K_p.$
\begin{remark}
    We give the following remarks regarding the above setup.
    \begin{enumerate}
    \item When $X=X(N),$ we have that $D=\gamma_p=1$ and $\nu=N.$
    \item By a theorem of B\'elyi \cite{Belyi}, if $X/K$ is any smooth, projective, and irreducible curve, then it can be realized as $\Gamma\backslash\overline{\mathfrak{H}}$ for some finite-index subgroup $\Gamma$ of $\SL_2(\Z),$ where $\overline{\mathfrak{H}}$ is $\mathfrak{H}$ with the cusps adjoined. 
    \end{enumerate}
\end{remark}

In this notation, if $k\geq 3$ and $\mathfrak{u} = \{u_1,\ldots, u_t\}\subset Y(K),$ we define the space of $R$-valued meromorphic modular forms with poles in $\mathfrak{u}$ (see Subsection~\ref{subsec:alg-theory})
\begin{align*}
M_k(X,\star\mathfrak{u}, R) :=&\{f\text{ is a meromorphic modular form on $X$ of weight $k,$} \\ 
& \text{holomorphic on $X\setminus\mathfrak{u}$,}\text{ with Fourier coefficients in }R\}
\end{align*}
and the space of $R$-valued meromorphic cusp forms with poles in $\mathfrak{u}$
$$
S_k(X,\star\mathfrak{u},R):=\{f\in M_k(X,\star\mathfrak{u},R) |  f\text{ vanishes at the cusps}\}.
$$
If $\mathfrak{u}$ is empty, we denote the corresponding spaces by $M_k(X,R)$ and $S_k(X,R)$ respectively. Moreover, if $\mathfrak{u}$ is instead a subset of the cusps, we denote the space of weakly holomorphic modular forms by $M_k^{!}(X,R)$ and the subspace with vanishing constant terms in the $q$-expansions by $S_k^{!}(X,R).$

If $f\in S_k(X,\star\mathfrak{u}, R),$  it has a $t$-expansion $\sum\limits_{n\geq 1}a_f(n)t^n$ and this $t$-expansion determines $f$ uniquely (see 1.6.1 of \cite{Katz-padic}). Finally, if $u\in\mathfrak{u}$ and $f\in S_k(X,\star\mathfrak{u},R),$ we denote by $-\ord_u(f)$ the order of the pole of $f$ at $u$ and we write $-\ord_{\mathfrak{u}}(f):=\max\{0,-\ord_u(f) | u\in\mathfrak{u}\}.$ 

Moreover, to make our statements cleaner, we give the following definition.

\begin{definition}\label{def:goodPrime}
    If $M\geq 1, k\geq 3,$ $K$ is an algebraic number field, and $\mathfrak{u}\subset Y(K),$ we say that $p$ is a good prime for $(M,k,K,\mathfrak{u})$ if the following are true.
    \begin{enumerate}
        \item The $p$-adic completion $K_p$ of $K$ is an unramified extension of $\Q_p$ of degree $r.$
        \item The points $u\in\mathfrak{u}$ can be embedded into $\mathfrak{o}_{K_p}.$
        \item The reductions of $u\in\mathfrak{u}$ in $\F_{p^r}$ are distinct.
        \item $p\nmid 2M\cdot(k-2)!.$
    \end{enumerate}
\end{definition}

In this notation, we have an analogue of \eqref{eq:ASD-congruence-dim1} for meromorphic modular forms.

\begin{theorem}\label{thm:main-ASD}
    Let $\Gamma$ be a finite index subgroup of $\SL_2(\Z)$ such that $X_\Gamma:=\Gamma\backslash\overline{\mathfrak{H}}$ is defined over $K$ and satisfies the conditions above. Moreover, let $k\geq 3,$ $\mathfrak{u}\subset Y(K),$ and $d:=2\dim S_k(X_{\Gamma})+(k-1)\cdot\#\mathfrak{u}.$ If $p$ is a good prime for $(M,k,K,\mathfrak{u})$ and $[K_p:\Q_p]=r,$ then there exists a polynomial 
    $$
    P_{k,p^r,\mathfrak{u}}(T)=\sum\limits_{i=0}^{d} A_{i,p^r,\mathfrak{u}} T^i \in \Z[T]
    $$
    such that for any $f(\tau)=\sum\limits_{n\geq 1} a_f(n)t^n\in S_k(X_\Gamma,\star\mathfrak{u}, \mathfrak{o}_K\left[\frac{1}{M}\right])$ and any $m\geq 1, s\geq 1,$ we have
    \begin{equation}
    \sum\limits_{i=0}^d p^{r(k-1)i}\cdot A_{i,p^r,\mathfrak{u}}\cdot\gamma_p^{mp^s(p^{ri}-1)/(p-1)}\cdot a_{f}\left(\frac{mp^s}{p^{ri}}\right) \equiv 0\pmod{ p^{r(k-1)s+j_{f,p}}},
    \end{equation}
    where
    $$
    j_{f,p}:=\begin{cases}
        k+\ord_{\mathfrak{u}}(f) &\text{ if }\dim S_k(X) =0 \text{ and }-\ord_{\mathfrak{u}}(f)\leq k-1,\\
        \ord_p((-\ord_{\mathfrak{u}}(f)-1)!)&\text{ otherwise.}
    \end{cases}
    $$
\end{theorem}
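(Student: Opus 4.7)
The plan is to extend the Scholl--Kazalicki--Scholl rigid-cohomological framework from weakly holomorphic forms (poles at cusps) to meromorphic forms (poles at interior points $\mathfrak{u}\subset Y(K)$), and then read off the congruence as the Cayley--Hamilton identity for Frobenius acting on an enlarged de Rham cohomology group. First, I would realize $S_k(X_\Gamma,\star\mathfrak{u},R)$ algebraically: via Kodaira--Spencer and Eichler--Shimura, a form $f$ of weight $k$ with poles in $\mathfrak{u}$ determines a class in $H^1_{dR}(Y\setminus\mathfrak{u},\Sym^{k-2}\mathcal{H})$, where $\mathcal{H}=R^1\pi_*\Omega^{\bullet}_{\mathcal{E}/Y}$ with its Gauss--Manin connection. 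The hypothesis $p\nmid(k-2)!$ in the definition of a good prime is exactly what is needed to make the $\Sym^{k-2}$ construction behave well integrally. The excision/residue sequence
\begin{equation*}
0\to H^1_{dR}(X_\Gamma,\Sym^{k-2}\mathcal{H})\to H^1_{dR}(Y\setminus\mathfrak{u},\Sym^{k-2}\mathcal{H})\to \bigoplus_{u\in\mathfrak{u}}\Sym^{k-2}H^1_{dR}(E_u/K)\to 0
\end{equation*}
then gives the dimension count $d=2\dim S_k(X_\Gamma)+(k-1)\cdot\#\mathfrak{u}$, since the first term has dimension $2\dim S_k$ by Eichler--Shimura and each fiber $\Sym^{k-2}H^1_{dR}(E_u)$ has dimension $k-1$.

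Next, I would pass to rigid cohomology over $K_p$ to equip the middle group with a $\sigma$-semilinear Frobenius $\Frob$. Because the object descends to $\Q$ and the good-prime hypotheses ensure smoothness and separation of the $u\in\mathfrak{u}$ modulo $p$, the $K_p$-linear operator $\Frob^r$ has a characteristic polynomial with coefficients in $\Z$; this is the polynomial $P_{k,p^r,\mathfrak{u}}(T)=\sum_{i=0}^{d}A_{i,p^r,\mathfrak{u}}T^i$. Cayley--Hamilton yields the operator identity $P_{k,p^r,\mathfrak{u}}(\Frob^r)=0$ acting on the cohomology, and a Newton-polygon estimate (Mazur's inequality, exploited exactly as in Scholl) shows that the $i$-th iterate of $\Frob^r$ is divisible by $p^{r(k-1)i}$ on the weight--$(k-1)$ piece, accounting for the $p^{r(k-1)i}$ factors in the statement.

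To convert the cohomological identity into a congruence on $t$-expansions, I would use that at the cusp $\underline{\infty}$ Frobenius acts on $\widehat{\mathcal{O}_{X,\underline{\infty}}}=R[[t]]$ as $t\mapsto\gamma_p t^p$, with $\gamma_p$ the Hensel lift from \eqref{eq:gammapDef}; iterating, $\Frob^{ri}$ sends $t^n\mapsto \gamma_p^{n(p^{ri}-1)/(p-1)}t^{np^{ri}}$. Extracting the coefficient of $t^{mp^s}$ in $P_{k,p^r,\mathfrak{u}}(\Frob^r)f=0$ produces exactly the claimed linear combination, congruent to $0$ modulo $p^{r(k-1)s}$. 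The sharpening by $j_{f,p}$ is the delicate last step: in the generic case, a pole of order $n=-\ord_{\mathfrak{u}}(f)$ forces factorials $1/(n-1)!$ to appear when the local expansion of $f$ is equated, via divided-power integration of the Gauss--Manin connection, with a class in the integral rigid cohomology lattice, costing $\ord_p((n-1)!)$ of precision; in the exceptional case $\dim S_k(X_\Gamma)=0$ with $n\leq k-1$, the cohomology is entirely local and $f$ already lies in the "polynomial" part of $\Sym^{k-2}H^1_{dR}(E_u)$, so no factorials arise and the residual weight-filtration vanishing gives the better bound $k+\ord_\mathfrak{u}(f)$.

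The principal technical obstacle is the Frobenius analysis at the poles $u\in\mathfrak{u}$ themselves. Unlike Kazalicki--Scholl's cusps, where the formal neighbourhood is a Tate curve and Frobenius acts with tame structure, an interior point $u$ contributes a fiber $\Sym^{k-2}H^1_{dR}(E_u)$ on which Frobenius acts by the full local Frobenius of the CM or ordinary elliptic curve $E_u$; combining this local action with the global Frobenius and extracting sharp $p$-adic estimates, while simultaneously tracking the divided-power denominators that arise from the Laurent expansion of $f$ against the Gauss--Manin connection, is what forces the precise shape of $j_{f,p}$ and is where most of the work will sit.
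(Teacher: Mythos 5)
Your overall architecture is the same as the paper's: extend Scholl's complex logarithmically along $\mathfrak{u}$, use the residue exact sequence onto $\bigoplus_{u}\Sym^{k-2}H^1_{dR}(E_u)$ for the dimension count $d$, equip the cohomology with a $\sigma$-semilinear Frobenius, apply Cayley--Hamilton to $F^r$, and read the identity off on $t$-expansions, with a factorial estimate controlling $j_{f,p}$. However, the step you use to produce the factors $p^{r(k-1)i}$ and the modulus $p^{r(k-1)s}$ is wrong as stated: it is not true that the $i$-th iterate of $\Frob^r$ is divisible by $p^{r(k-1)i}$ on this cohomology. Its eigenvalues are Weil numbers of weight $k-1$ on the Scholl part and $p^r$ times weight-$(k-2)$ numbers on the residue part (see Remark~\ref{rem:computing-P-polynomial}), so in the ordinary case there are (near-)unit-root directions and no such divisibility; Mazur's inequality only gives divisibility by $p^i$ on the $i$-th Hodge filtration step, which is how the paper uses it later, in Lemma~\ref{lem:indDecomp}. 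In the paper the powers of $p$ come instead from the normalization of the Frobenius action on $t$-expansions, $F(\sum a(n)t^n)\equiv p^{k-1}\sum\sigma(a(n))\gamma_p^{n}t^{pn}$ modulo $\left(q\frac{d}{dq}\right)^{k-1}\mathfrak{o}_L[[t]]$ (Scholl's theorem quoted in Subsection~\ref{subsec:pAdicTheory}), and the modulus $p^{r(k-1)s}$ comes from the fact that the $t^{mp^s}$-coefficient of anything in $\left(q\frac{d}{dq}\right)^{k-1}\mathfrak{o}_L[[t]]$ is divisible by $(mp^s)^{k-1}$; you assert the modulus without this mechanism, which is the heart of how an ASD congruence is extracted from a cohomological identity. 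Relatedly, you assert $P_{k,p^r,\mathfrak{u}}\in\Z[T]$ by descent; the paper actually derives this from Scholl's $\ell$-adic comparison (Proposition~\ref{prop:charF}) together with the factorization of the characteristic polynomial along the residue sequence (Lemma~\ref{lem:charFLog}).

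The second genuine gap is the sharpened shift $j_{f,p}=k+\ord_{\mathfrak{u}}(f)$ when $\dim S_k(X)=0$ and $-\ord_{\mathfrak{u}}(f)\le k-1$. Your justification (``the cohomology is entirely local \ldots residual weight-filtration vanishing'') is not an argument and does not explain where a \emph{positive} gain of $k+\ord_{\mathfrak{u}}(f)$ powers of $p$ comes from, nor why $\dim S_k(X)=0$ is needed. In the paper this is the content of Proposition~\ref{prop:extraP}: one replaces $\Omega^\bullet_{\mathfrak{u}}$ by Scholl's $m=p$ twisted complex $\Omega^\bullet_{\mathfrak{u},p}$ built from $\mathcal{E}_{k-2,p}=\Sym^{k-2}\beta^{-1}(p\boldsymbol{\omega}^{-1})$, shows that $p^{-\ord_{\mathfrak{u}}(f)-1}f$ lies in the integral $\mathbb{H}^1(X,\Omega^\bullet_{\mathfrak{u},p})$, and then the $\left(p\,q\frac{d}{dq}\right)^{k-1}$ appearing in that complex yields the net gain $(k-1)-(-\ord_{\mathfrak{u}}(f)-1)=k+\ord_{\mathfrak{u}}(f)$. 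Your generic-case estimate $\ord_p((-\ord_{\mathfrak{u}}(f)-1)!)$ is, by contrast, essentially the paper's Proposition~\ref{prop:quasi} and Corollary~\ref{cor:jfp1} (the cokernel on $H^1$ of $\Omega^\bullet_{\mathfrak{u}}\to\Omega^\bullet_{\mathfrak{u}}(m\mathfrak{u})$ is killed by $m!$), so that part of your outline is sound; but without the correct $t$-expansion normalization and the twisted complex, the two quantitative claims that make the theorem precise are not established.
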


In Lemma~\ref{lem:ladicChar}, we give a formula for the polynomial $P_{k,p^r,\mathfrak{u}}(T)$ in terms of an $\ell$-adic representation. We illustrate Theorem~\ref{thm:main-ASD} by two examples.

\begin{example}\label{ex:ex1}
    Consider the Hauptmodul $\lambda(\tau)$ of $\Gamma_1(4)$ defined by
    \begin{equation}\label{eq:defLambda}
    \lambda(\tau):=16 q\cdot\frac{\prod\limits_{n=1}^{\infty} (1+q^{2n})^{16}}{\prod\limits_{n=1}^{\infty} (1+q^{n})^8}
    \end{equation}
    and the usual theta function given by
    \begin{equation}\label{eq:defTheta}
    \theta(\tau):=\sum\limits_{n\in\Z} q^{n^2}.
    \end{equation}
    If $u\in\overline{\Q}{ \setminus\{0,1\}},$ define the  modular form 
    $$
    f_u(\tau):=\frac{1}{2\pi i}\cdot \frac{\theta(\tau)^2\cdot\frac{d\lambda(\tau)}{d\tau}}{\lambda(\tau)-u} =:\sum\limits_{n\geq 1}a_f(n; u)q^n\in S_3\left(X_1(4), \star \{\lambda(\tau) = u\}, \Z\left[\frac{1}{6u}\right]\right).
    $$ 
    If $p\geq 5$ with $u\in\Z_p$ and $m,s\geq 1,$ then we have
    \begin{equation}\label{eq:ex1prelim}
        A_{0,p}(u)a_f(mp^s; u)+p^2A_{1,p}(u)a_f(mp^{s-1};u)+p^4A_{2,p}(u)a_f(mp^{s-2};u)\equiv 0\pmod{p^{2s+2}}
    \end{equation}
    for some $A_{0,p}(u),A_{1,p}(u),A_{2,p}(u)\in\Z_p.$ We know (see Lemma~\ref{lem:ladicChar} and Theorem 1 of \cite{HLLT}) that $A_{0,p}(u)=p^3,A_{2,p}(u)=1,$ and $A_{1,p}(u)=-pA_p(u),$ where 
    $$
    A_p(u) := p + 1 - \#E_u(\F_p),
    $$
    with $E_u$ the elliptic curve in Theorem~\ref{thm:Gamma1(4)Basis}.
    Dividing (\ref{eq:ex1prelim}) by $p^3,$ this is equivalent to
    \begin{equation}\label{eq:ex1}
    a_f(mp^s;u) - A_p(u)a_f(mp^{s-1};u) + pa_f(mp^{s-2};u) \equiv 0 \pmod {p^{2s-1}}.
    \end{equation}
    \end{example}

\begin{remark}
    To be precise, Theorem~\ref{thm:main-ASD} does not immediately apply to Example~\ref{ex:ex1} since we require $X_1(N)$ with $N\geq 5.$ In Subsection~\ref{subsec:ERCGamma1(4)}, we illustrate how to obtain (\ref{eq:ex1}) from the statement of the theorem by a standard trick.
\end{remark}

\begin{example}\label{ex:ex2}
To illustrate Theorem~\ref{thm:main-ASD} for a noncongruence subgroup of $\SL_2(\Z),$ we first introduce some notation. Let
$$
t(\tau):=q\prod\limits_{n=1}^\infty(1-q^n)^{5\left(\frac n5\right)}, 
$$ where $\left(\frac{\cdot}{5}\right)$ denotes the Kronecker symbol.
This is the Hauptmodul of $\Gamma_1(5)$ which  has a pole at the cusp $0$ and vanishes at the cusp $\infty.$ We have that 
$$
F(\tau):=\frac{1}{2\pi i}\cdot\frac{d{t(\tau)}} 
{d\tau}\cdot \sum_{n\ge 0} \left(\sum_{k=0}^n \binom{n}{k}^2\binom{n+k}{k}\right)t(\tau)^n\in M_3\left(\G_1(5), \Z\left[\frac{1}{5}\right]\right)
$$ is a weight-3 Eisenstein series for $\G_1(5)$ which vanishes at all cusps of $\G_1(5)$ except at infinity.
Let
$t_2(\tau)=t(\tau)^{1/2}$ be the Hauptmodul of a genus-$0$ index-2 subgroup $\Gamma_2$ of $\Gamma_1(5)$ defined in \cite{ALL}.  
If $u\in\Q\setminus\{0\},$ consider the meromorphic modular form on $\Gamma_2$
$$
\frac{t_2(\tau)F(\tau)}{(t_2(\tau)-u)}=:\sum\limits_{n\geq 1}b(n)q^{n/2} \in S_3\left(\Gamma_2, \star\{t_2=u\},\Z\left[\frac{1}{10u}\right]\right).
$$
In this notation, if $p\geq 3$ with $p\neq 5, m,s\geq 1$ and $E_u$ has good reduction modulo $p$,  then we have 
\begin{multline*}
    b(mp^s)-(a_p(u)+\chi(p)B_p)b(mp^{s-1})+(\chi(p) B_pa_p(u)+p+\chi(p) p^2)b(mp^{s-2})\\ -\chi(p) (B_p+a_p(u)p^2)b(mp^{s-3})+\chi(p) p^3b(mp^{s-4})\equiv 0\pmod {p^{2s-3}},
\end{multline*}
where $\chi=\left(\frac{-1}{\cdot}\right),$ $a_p(u)=p+1-\#E_u(\F_p)$ for the elliptic curve
$$
E_{u}:\quad y^2+(1-u^2)xy-u^2y=x^3-u^2x^2,
$$
and
$$
q\prod\limits_{n=1}^\infty(1-q^{4n})^6=:\sum\limits_{n\geq 1}B_nq^n.
$$
For more details, see Subsection~\ref{subsec:Gamma_2} and \cite{ALL} by Atkin, Li, and the second author.
\end{example}

The proof of Theorem~\ref{thm:main-ASD} shows that, under somewhat general conditions, the $(d+1)$-term recurrence relation in Theorem~\ref{thm:main-ASD} decomposes into $2$-term recurrence relations. To make this precise, we first prove that for all primes $p\nmid M\cdot(k-2)!,$ we have
$$
\dim_{K}\frac{S_k(X,\star \mathfrak{u},K)}{\partial^{k-1}M_{2-k}(X,\star \mathfrak{u}, K)}=2\dim_{K}S_k(X)+(k-1)\cdot\#\mathfrak{u},
$$
where $\partial$ is the differential operator whose effect on the $q$-expansions is given by $q\frac{d}{dq}.$ For each prime $p$ and $r$ as in Theorem~\ref{thm:main-ASD} such that $P_{k,p^r,\mathfrak{u}}(T)$ has distinct roots, the space $\frac{S_k(X,\star \mathfrak{u},K)}{\partial^{k-1}M_{2-k}(X,\star \mathfrak{u}, K)}$ has an $\mathfrak{o}_K$-basis $\{f_{i,p^r}=\sum\limits_{n\geq 1}a_{i,p^r}(n)t^n, 0\leq i\leq d-1\}$ such that for all $m\geq 1,s\geq 0,$ we have 
$$
p^{r(k-1)}\gamma_p^{mp^s(p^r-1)/(p-1)}a_{i,p^r}\left(\frac{mp^s}{p^r}\right)- r_i\cdot a_{i,p^r}(mp^s)\equiv 0\pmod{p^{r(k-1)s+j_{f_i,p}}},
$$
where $\{r_0,\ldots,r_{d-1}\}$ are the roots of the polynomial $P_{k,p^r, \mathfrak{u}}(T).$ When $\dim S_k(X)=0$ and the points $u\in\mathfrak{u}$ are preimages under $\kappa:X\to X(N)\left[\frac{1}{M}\right]$ of CM points for the universal elliptic curve $E^{\univ}$ over $X(N),$ the choice of $f_{i,p^r}$ is independent of $p^r.$

\begin{cor}\label{cor:ASD-basis}
Let $K$ be an algebraic number field, $k\geq 3$ such that $\dim S_k(X)=0$ and $\mathfrak{u}\subset Y(K)$ such that $E^{\univ}_{\kappa(u)}$ is a CM elliptic curve by a quadratic order in $K$ for each $u\in\mathfrak{u}.$ There exist modular forms $\{f_{u,i}=\sum\limits_{n\geq 1}a_{u,i}(n)t^n\in S_k(X,\star u,\mathfrak{o}_K\left[\frac{1}{M},u,\frac{1}{u}\right])\footnote{With this notation, we mean adjoining the ring of definition of $u$ and $\frac{1}{u}$ to $\mathfrak{o}_K\left[\frac{1}{M}\right]$}|u\in\mathfrak{u},0\leq i\leq k-2\}$ such that for all primes $p$ good for $(M,k,K,\mathfrak{u})$ with $E^{\univ}_{\kappa(u)}$ having good reduction, we have
$$
\frac{S_k(X,\star u,K)}{\partial^{k-1}M_{2-k}(X,\star u,K)}\cong \bigoplus\limits_{u\in\mathfrak{u}}\bigoplus\limits_{i=0}^{k-2}K\cdot f_{u,i}
$$
satisfying
$$
p^{k-1}\gamma_p^{mp^s}\sigma(a_{u,i}(mp^{s-1})) \equiv p\cdot c_{u,i} \cdot a_{u,\tilde{\sigma}(i)}(mp^s) \pmod{p^{(k-1)s+j_{f,p}}} \ \ \ \ \ \text{ for all }m,s\geq 1,
$$
where $\sigma$ is the Frobenius endomorphism of $K_p,$ $c_{u,i}\in p^{k-2-i}\mathfrak{o}_{K_p},$ and $\tilde{\sigma}(i):=i$ if $\sigma$ fixes the CM field in $K_p$ and $\tilde{\sigma}(i) := k-2 - i$ otherwise. If $[K_p:\Q_p]=r$ and $0\leq i\leq\frac{k-2}{2},$ then we have that
$$
d_{u,i}^r +\frac{p^r}{d_{u,i}^r}=p^{ri}\cdot\left(p^{r(k-2-2i)}+1-\#E(\F_{p^{r(k-2-2i)}})\right),
$$
where 
$$
d_{u,i}:=\sigma^{r-1}\left(c_{u,i}^{\lfloor(r+1)/2\rfloor}\right)\cdot\sigma^r(c_{u,\tilde{\sigma}(i)}^{\lfloor r/2\rfloor}).
$$
Moreover, if $E^{\univ}_{\kappa(u)}$ has ordinary reduction then $\ord_p(c_{u,i})=p^{k-2-i}.$
\end{cor}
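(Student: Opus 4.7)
The plan is to extract from the proof of Theorem~\ref{thm:main-ASD} a Frobenius-equivariant decomposition of the quotient space, and then use the CM action to split each local summand into one-dimensional Frobenius-stable pieces. The first step is to use the same de Rham/rigid cohomology construction underlying Theorem~\ref{thm:main-ASD} to identify, Frobenius-equivariantly over $K_p$,
$$
\frac{S_k(X,\star\mathfrak u,K_p)}{\partial^{k-1}M_{2-k}(X,\star\mathfrak u,K_p)}\;\cong\;\bigoplus_{u\in\mathfrak u}\Sym^{k-2}H^1_{\mathrm{dR}}\!\left(E^{\univ}_{\kappa(u)}/K_p\right).
$$
The hypothesis $\dim S_k(X)=0$ kills any parabolic contribution, and the goodness of $p$ ensures that the residue disks around distinct points of $\mathfrak u$ are disjoint, so the direct sum genuinely decouples. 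The local summand at $u$ is computed via polar expansions on the fiber of the universal elliptic curve, which is where $\Sym^{k-2}H^1$ of an elliptic curve enters.

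Next, exploit the CM structure. Because $E^{\univ}_{\kappa(u)}$ has CM by a quadratic order $\mathcal O$ whose fraction field embeds in $K$, the two embeddings $\mathcal O\hookrightarrow K$ diagonalize the CM action on $H^1_{\mathrm{dR}}(E^{\univ}_{\kappa(u)}/K)=L_+\oplus L_-$ into two $K$-lines. Taking $\Sym^{k-2}$ gives a $K$-rational decomposition $\bigoplus_{i=0}^{k-2}L_{u,i}$ with $L_{u,i}:=L_+^{\otimes(k-2-i)}\otimes L_-^{\otimes i}$, each one-dimensional. Pulling a generator of $L_{u,i}$ back through the identification of the previous paragraph and clearing denominators yields $f_{u,i}\in S_k(X,\star u,\mathfrak o_K[\tfrac1M,u,\tfrac1u])$; since the splitting is intrinsic to $K$, this choice is independent of $p$, as required.

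Now analyze the Frobenius $\phi$. On $H^1_{\mathrm{dR}}(E^{\univ}_{\kappa(u)}/K_p)$ it either commutes with the CM action (when $\sigma$ fixes the CM subfield of $K_p$) or intertwines the two CM characters (otherwise); after symmetric powers this is precisely the involution $\tilde\sigma$ on the index $i$. Hence $\phi(L_{u,i})\subseteq L_{u,\tilde\sigma(i)}$, and reading off the matrix coefficient defines $c_{u,i}\in K_p$. Translating this rigid-Frobenius statement into a congruence on the $t$-coefficients of $f_{u,i}$---tracking the factor $\gamma_p^{mp^s}$ coming from the $q\leftrightarrow t$ uniformizer comparison in \eqref{eq:gammapDef}, the $p^{k-1}$ from the Tate twist, and the integrality modulus $p^{(k-1)s+j_{f,p}}$ already produced by Theorem~\ref{thm:main-ASD}---yields the two-term congruence in the statement. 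The main obstacle is precisely this bookkeeping of normalization factors and the propagation of $j_{f,p}$ through the CM decomposition.

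Finally, iterate $r$ times. Because $\sigma^r=\id$ on $K_p$, $\phi^r$ is $K_p$-linear and preserves each pair $L_{u,i}\oplus L_{u,k-2-i}$; unraveling the iterated composition produces exactly the quantity $d_{u,i}$ of the statement, and the characteristic polynomial of $\phi^r$ on this two-dimensional subspace must coincide with that of crystalline Frobenius on the corresponding piece of $\Sym^{k-2}H^1_{\mathrm{dR}}(E^{\univ}_{\kappa(u)}/\F_{p^r})$. Its eigenvalues are $\alpha^{r(k-2-i)}\beta^{ri}$ and $\alpha^{ri}\beta^{r(k-2-i)}$ for $\alpha,\beta$ the Frobenius eigenvalues of $E^{\univ}_{\kappa(u)}/\F_p$; summing gives $p^{ri}(\alpha^{r(k-2-2i)}+\beta^{r(k-2-2i)})$, and applying $\alpha^n+\beta^n=p^n+1-\#E(\F_{p^n})$ proves the claimed trace identity. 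In the ordinary case, the Newton polygon of $\phi$ on $H^1_{\mathrm{dR}}(E^{\univ}_{\kappa(u)}/K_p)$ has slopes $0$ and $1$, so on $L_{u,i}$ the slope of $\phi$ is $k-2-i$ (with $L_\pm$ labelled to match the convention of the statement), giving $\ord_p(c_{u,i})=k-2-i$.
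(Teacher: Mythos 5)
Your overall route is the same as the paper's: use the vanishing of $S_k(X)$ to identify the quotient with $\bigoplus_{u}\Sym^{k-2}H^1_{dR}(E^{\univ}_{\kappa(u)})$ via the residue map (up to a Tate twist), diagonalize the CM action over $K$ to get a $p$-independent basis, use $\sigma$-semilinearity of Frobenius to see that it preserves or swaps the CM eigenlines according to whether $\sigma$ fixes the CM field (this is $\tilde\sigma$), and then compute the characteristic polynomial of $F^r$ on each two-dimensional CM-isotypic pair against the Frobenius eigenvalues of the elliptic curve to get the trace identity. All of that matches Lemmas~\ref{lem:CMMod}, \ref{lem:indDecomp}, and \ref{lem:indDecompOrd}.

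However, there is a genuine gap: you never establish the divisibility $c_{u,i}\in p^{k-2-i}\mathfrak{o}_{K_p}$ (indeed not even integrality of $c_{u,i}$), which is the content that makes the two-term congruence nontrivial modulo $p^{(k-1)s+j_{f,p}}$ and which must hold at \emph{all} good primes of good reduction, including supersingular ones. Your only valuation argument is the Newton-slope remark in the ordinary case, and slopes cannot give the general bound: at a supersingular prime the two eigenvalues of $F^r$ on the pair $L_{u,i}\oplus L_{u,k-2-i}$ have equal valuation, yet the claim forces the very unbalanced distribution $\ord_p(c_{u,i})\geq k-2-i$, $\ord_p(c_{u,\tilde\sigma(i)})$ correspondingly small. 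The paper obtains this from Mazur's theorem on Frobenius and the Hodge filtration \cite{MazurFrobHodgeFil} combined with Lemma~\ref{lem:quasiHodge}, which identifies the Hodge filtration level of the residue of $f_{u,i}$ in terms of its pole order; this same ingredient is also what justifies your implicit labelling of $L_\pm$ (i.e.\ which CM line is $F^1_{\Hdg}=\boldsymbol{\omega}$, hence which index $i$ carries which slope in the ordinary case), a point you dismiss with ``labelled to match the convention of the statement.'' Without the filtration-compatibility of the residue map and the Mazur divisibility estimate, both the membership $c_{u,i}\in p^{k-2-i}\mathfrak{o}_{K_p}$ and the ordinary-case equality $\ord_p(c_{u,i})=k-2-i$ remain unproved. (A smaller bookkeeping slip: the factor $p^{k-1}$ in the congruence comes from the action of $F$ on $t$-expansions, while the Tate twist in the residue sequence contributes only the single factor $p$ multiplying $c_{u,i}$.)
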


The basis in Corollary~\ref{cor:ASD-basis} can be computed explicitly for any choice of CM-poles at once. To illustrate the method, we compute the basis for $\SL_2(\Z)$ and $\Gamma_1(4)$ when we have one pole. 

For $\Gamma_1(4),$  we have the following result.
\begin{theorem}\label{thm:Gamma1(4)Basis}

If $K$ is an algebraic number field and $u\in K\setminus\{0,1\},$ consider the elliptic curve
$$
E_u:\ \ \ y^2=4x^3-\frac{(1+14u+u^2)}{12}x+\frac{1-33u-33u^2+u^3}{216}
$$
and denote by $\omega=\frac{dx}{y}$ and $\eta=\frac{xdx}{y}$ the differential forms on $E_u$ with poles at the origin of order $0$ and $2$ respectively. Assume that $E_u$ is a CM elliptic curve by an order $\mathcal{O}$ of  an imaginary subfield of $K$, $p\geq 5$ is a prime that splits in $K,$ and $c_1\omega+c_2\eta$ is an eigenvector of CM with $c_1,c_2\in K$ and $c_2\neq 0.$ Moreover, suppose that $E_u$ has good and ordinary reduction at $p$ and denote by $\mu_p$ the $p$-adic unit that is a solution of 
$$
\mu_p+\frac{p}{\mu_p}=p+1-\#E_u(\F_p).
$$
In this notation, the following are true.
\begin{enumerate}
    \item If $m,s\geq 1$ and 
    $$
    f(\tau):=\frac{1}{2\pi i}\frac{\theta^2(\tau)\lambda'(\tau)}{\lambda(\tau)-u}=:\sum\limits_{n\geq 1}a_{f}(n)q^n,
    $$
    then we have
    $$
    a_f(mp^s)\equiv \mu_pa_f(mp^{s-1})\pmod{p^{2s}}.
    $$
    \item  If $m,s\geq 1$ and
    \begin{align*}
    g(\tau):=\frac{1}{2\pi i}\Bigg(\frac{c_2\cdot u(1-u)}{(\lambda(\tau)-u)^2} 
    &+\left(c_1-c_2\cdot\frac{(5u-1)}{12}\right)\cdot\frac{1}{(\lambda(\tau)-u)}\Bigg)\theta^2(\tau)\lambda'(\tau) =:\sum\limits_{n\geq 1}a_g(n)q^n,
    \end{align*}
    then we have
    $$
    a_g(mp^s)\equiv \frac{p}{\mu_p}a_g(mp^{s-1})\pmod{p^{2s}}.
    $$

    \item If $m,s\geq 1$ and 
    $$
    f(\tau):=\frac{1}{2\pi i}\frac{\theta^4(\tau)\lambda'(\tau)}{\lambda(\tau)-u}=:\sum\limits_{n\geq 1}a_f(n)q^n,
    $$
    then we have
    $$
    a_f(mp^s)\equiv \mu_p^2 a_f(mp^{s-1})\pmod{p^{3s}}.
    $$

    \item If $m,s\geq 1$ and 
    \begin{align*}
    g(\tau):=\frac{1}{2\pi i}\Bigg(\frac{c_2u(1-u)}{2(\lambda(\tau)-u)^2}+\left(c_1-c_2\cdot \frac{(5u-1)}{12}\right)\frac{1}{(\lambda(\tau)-u)}\Bigg)\theta^4(\tau)\lambda'(\tau)=:\sum\limits_{n\geq 1}a_g(n)q^n,
    \end{align*}
    then we have
    $$
    a_g(mp^s)\equiv pa_g(mp^{s-1})\pmod{p^{3s}}.
    $$

    \item  If $m,s\geq 1$ and
    \begin{align*}
        h(\tau):=\frac{1}{2\pi i}&\Bigg(\frac{c_2^2u^2(1-u)^2}{(\lambda(\tau)-u)^3}+\frac{c_2^2Q(u)+c_1c_2u(1-u)}{(\lambda(\tau)-u)^2} \\
        &+\left(c_2^2P(u)+c_1^2-\frac{(5u-1)c_1c_2}{6}\right)\cdot\frac{1}{(\lambda(\tau)-u)}\Bigg)\theta^4(\tau)\lambda'(\tau)=:\sum\limits_{n\geq 1}a_h(n)q^n,
    \end{align*}
    with $P(u)=\frac{1}{144}(61u^2-46u+1)$ and $\displaystyle Q(u)=\frac{-u(1-u)(17u-7)}{12},$
    then we have
    $$
    a_h(mp^s)\equiv \frac{p^2}{\mu_p^2}a_h(mp^{s-1})\pmod{p^{3s}}.
    $$
    
\end{enumerate}
\end{theorem}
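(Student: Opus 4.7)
The plan is to deduce Theorem~\ref{thm:Gamma1(4)Basis} from Corollary~\ref{cor:ASD-basis} by realizing each of the explicit forms $f,g,h$ as a Frobenius eigenvector in the de Rham quotient attached to the CM fiber $E_u$, for both weights $k=3$ and $k=4$.

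Since $\dim S_k(X_1(4))=0$ for $k\in\{3,4\}$, Corollary~\ref{cor:ASD-basis} applies at the single pole $u$ and produces a basis of $k-1$ Frobenius eigenforms in
$$
\frac{S_k(X_1(4),\star u,K)}{\partial^{k-1}M_{2-k}(X_1(4),\star u,K)},
$$
which the Kodaira--Spencer / Gauss--Manin comparison identifies with $\Sym^{k-2}H^1_{\mathrm{dR}}(E_u/K)$. Because $E_u$ has CM by an order of an imaginary subfield of $K$, $p\ge 5$ splits in $K$, and $E_u$ is ordinary at $p$, Frobenius on $H^1_{\mathrm{dR}}(E_u/K_p)$ is simultaneously diagonalized by $\omega=dx/y$ (the unit-root eigenvector, eigenvalue $\mu_p$) and the hypothesized CM vector $c_1\omega+c_2\eta$ (eigenvalue $p/\mu_p$). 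Consequently $\{\omega^{k-2-j}(c_1\omega+c_2\eta)^{j}\}_{j=0}^{k-2}$ is a Frobenius eigenbasis of $\Sym^{k-2}H^1_{\mathrm{dR}}(E_u/K_p)$ with eigenvalues $\mu_p^{k-2-j}(p/\mu_p)^{j}$. Expanding the binomial already matches the $(c_1,c_2)$-structure visible in each part of the theorem: $c_1\omega+c_2\eta$ in part (2), $c_1\omega^2+c_2\omega\eta$ in part (4), and $c_1^2\omega^2+2c_1c_2\omega\eta+c_2^2\eta^2$ in part (5).

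The remaining task is to verify, for each form in the theorem, that its image in $\Sym^{k-2}H^1_{\mathrm{dR}}(E_u/K_p)$ is a nonzero scalar multiple of the claimed eigenvector $\omega^{k-2-j}(c_1\omega+c_2\eta)^j$. I would do this by computing leading Laurent expansions at $u$ in the local uniformizer $\lambda-u$: the principal parts $1/(\lambda-u)^{j+1}$ of weight-$k$ meromorphic forms translate, under Kodaira--Spencer for the Weierstrass family defining $E_u$, into cohomology classes of the shape $\omega^{k-2-j}\eta^{j}$, modulo lower-order pole terms that lie in the image of $\partial^{k-1}M_{2-k}$. Parts (1) and (3), which have only a simple pole, correspond directly to $\omega^{k-2}$; the higher-pole parts require the identification of $\theta^{k-1}\lambda'$ as the correct normalization of the Kodaira--Spencer differential for the given Weierstrass model, together with the explicit subleading corrections, which is how the auxiliary polynomials $P(u)=\tfrac{1}{144}(61u^2-46u+1)$ and $Q(u)=-\tfrac{u(1-u)(17u-7)}{12}$ enter.

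The main obstacle is this final bookkeeping: one must carefully expand $dx/y$ and $x\,dx/y$ against the local parameter $\lambda-u$ on the elliptic surface $\mathcal{E}\to X_1(4)$, use the given equation of the family to pin down the coefficients $P(u)$ and $Q(u)$ exactly, and confirm that tail terms are absorbed by elements of $\partial^{k-1}M_{2-k}$. Once the identification is in place, substituting into Corollary~\ref{cor:ASD-basis} and reading off the one-dimensional Frobenius eigenspaces (with $\gamma_p=1$, $\sigma=\mathrm{id}$ since $p$ splits, and $c_{u,i}\in p^{k-2-i}\mathfrak{o}_{K_p}^{\times}$ in the ordinary case) yields exactly the stated $2$-term congruences, and the precision $p^{(k-1)s}$ matches the $j_{f,p}=k+\ord_u(f)$ branch of the corollary since $-\ord_u(f)\le k-1$ in every part.
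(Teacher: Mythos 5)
Your overall architecture is the paper's: view the quotient $\frac{S_k(X,\star u,K)}{\partial^{k-1}M_{2-k}(X,\star u,K)}$ (for $k=3,4$, $\dim S_k=0$) through the residue map as $\Sym^{k-2}H^1_{dR}(E_u)$, check that the explicit forms $f,g,h$ have residues $\omega^{k-2-j}(c_1\omega+c_2\eta)^j$, and then read off two-term congruences from Corollary~\ref{cor:ASD-basis}. The "final bookkeeping" you defer is not incidental, though: it is exactly Lemmas~\ref{lem:Wt3Gamma1(4)Res} and~\ref{lem:Wt4Gamma1(4)Res}, proved via the Brown--Fonseca residue formula (Lemma~\ref{lem:FBResidue}) together with Lemmas~\ref{lem:E4E6Gamma1(4)}--\ref{lem:SomeExtraComps}; that is where $P(u)$ and $Q(u)$ are actually derived, and where the family $E_u$ in the statement comes from. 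Deferring it is acceptable in a sketch, but it is most of the work. You should also note that Corollary~\ref{cor:ASD-basis} does not literally apply to $X_1(4)$ (the setup requires a fine moduli curve, $N\geq 3$); the paper passes to $X_1(8)$ and treats the forms as $\Gamma_1(4)\backslash\Gamma_1(8)$-invariant, as in Subsection~\ref{subsec:ERCGamma1(4)}.

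The genuine problem is in the step that carries the content of the theorem: matching the constants $\mu_p^{k-2-j}(p/\mu_p)^j$ to the explicit forms. You assert that $\omega=dx/y$ is the unit-root eigenvector of Frobenius on $H^1_{dR}(E_u/K_p)$ with eigenvalue $\mu_p$, and that $c_1\omega+c_2\eta$ has eigenvalue $p/\mu_p$. This is backwards: $\omega$ spans $F^1_{\Hdg}$, so by Mazur's theorem (Theorem 1 of \cite{MazurFrobHodgeFil}, used in Lemmas~\ref{lem:quasiHodge} and~\ref{lem:indDecomp}) Frobenius is divisible by $p$ on its line; hence $\omega$ carries the non-unit eigenvalue $p/\mu_p$, while the unit-root line is $c_1\omega+c_2\eta$, which is transversal to $F^1$ precisely because $c_2\neq 0$. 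Your final constants nevertheless agree with the theorem only because of a second, compensating slip: you treat the Frobenius eigenvalue as the constant in $a(mp^s)\equiv\lambda\,a(mp^{s-1})$, whereas the action $F\big(\sum a(n)q^n\big)\equiv p^{k-1}\sum a(n)q^{pn}$ together with the Tate twist on the residue term (Lemma~\ref{lem:charFLog}) gives $p^{k-1}a(mp^{s-1})\equiv p\,\lambda_{\mathrm{Sym}}\,a(mp^s)$, so the congruence constant is $p^{k-2}/\lambda_{\mathrm{Sym}}$, inversely related to the eigenvalue. Done correctly, the form with residue $\omega^{k-2-j}(c_1\omega+c_2\eta)^j$ has twisted eigenvalue $p\,(p/\mu_p)^{k-2-j}\mu_p^{j}$ and hence constant $p^{j}\mu_p^{k-2-2j}$, which happens to coincide numerically with your claimed eigenvalue; but your justification, taken literally, would swap $\mu_p^{k-2}$ and $(p/\mu_p)^{k-2}$ in parts (1)/(2) and (3)/(5). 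To make the matching legitimate you need the Mazur/Hodge-filtration valuation argument of Lemmas~\ref{lem:indDecomp} and~\ref{lem:indDecompOrd}, plus the identification of $\mu_p$ with the unit root attached to the fiber at $\lambda=u$ (via 1.3 of \cite{Sch88} and Theorem 1 of \cite{HLLT}), rather than the assertion you gave.
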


\begin{remark}
    For the appearance of this family of elliptic curves $E_u,$ see Lemmas~\ref{lem:FBResidue} and \ref{lem:E4E6Gamma1(4)}.
    See Subsection~\ref{ss:Diagonalizing-CM-EC} for an algorithm to compute $c_1$ and $c_2$ for a given CM elliptic curve $E_u$.  Note that $f,g,$ and $h$ are unique up to scalar multiples. Moreover, their expressions can be computed using different models of the elliptic curve and using different methods. For example, see Remark~\ref{rem:GMHyper}.
\end{remark}

\begin{remark}
    If $p$ does not split in $K,$ then similar statements hold as in Corollary~\ref{cor:ASD-basis} for the same choices of $f,g,$ and $h.$
\end{remark}

\begin{example}\label{ex:ex3}
If $u=2$, $E_2$ has  CM by $\Z[2i]$ and we have $c_1=1$ and $c_2=4$ (see Algorithm~\ref{algorithm:H1dRDiag}). In the notation of the previous theorem, up to multiplicative constants, we have
$$
f(\tau)=\frac{1}{2\pi i}\cdot \frac{\theta(\tau)^2\cdot\frac{d\lambda(\tau)}{d\tau}}{\lambda(\tau)-2} =\sum\limits_{n\geq 1}a_f(n)q^n\in S_3\left(X_1(4), \star \{\lambda = 2\}, \Z\left[\frac{1}{2}\right]\right)
$$ 
and
$$
g(\tau)=f(\tau)\cdot\frac{\lambda(\tau)+2}{\lambda(\tau)-2}=\sum\limits_{n\geq 1}a_g(n)q^n.
$$
For each prime 
$p\equiv 1{\pmod 4},$ let $\mu_{2,p}$ be the unit root solution to $\mu_{2,p}+\frac{p}{\mu_{2,p}}=p+1- \#E_2(\F_p).$ 
In this notation, Theorem~\ref{thm:Gamma1(4)Basis} states that for all $m,s\geq 1,$ we have
\begin{equation*}
    a_f(mp^{s})\equiv\mu_{2,p} a_f(mp^{s-1})\pmod{p^{2s}} \ \ \ \text{ and }\ \ \ a_g(mp^{s})\equiv\frac{p}{\mu_{2,p}}a_g(mp^{s-1})\pmod{p^{2s}}.
\end{equation*}
Moreover, when $p\equiv 1\pmod 4,$ we can write 
$$
\mu_{2,p}=-\left(\frac2p\right)\G_p(\frac12)\G_p(\frac14)^2,
$$
where $\Gamma_p$ is the $p$-adic Gamma function (see (4.4) of \cite{Li-Long14} and Theorem 1 of \cite{LR}). Motivated by \cite[Theorem 3]{LR}, we find that if $p\equiv 3\pmod 4$ and $mp^s\leq 6000,$ then we have that
$$
a_g(mp^s)\equiv \frac{4}{\mu_p}a_f({m}p^{s-1}) \pmod{p^{2s}}\ \ \ \text{ and }\ \ \ a_f(mp^s)\equiv -\frac{p\mu_p}{4}a_g({m}p^{s-1}) \pmod{p^{2s}}.
$$
Although these two constants satisfy the relations in Corollary~\ref{cor:ASD-basis}, the authors are not aware of their geometric meaning when $p\equiv 3\pmod 4$.

\end{example}

For the group $\SL_2(\Z),$ we partially prove a recent conjecture of Zhang who constructs the basis using analytic methods. To make this precise, we first introduce some notation. If $k\geq 4,$ denote by $E_k(\tau)$ the normalized Eisenstein series of weight $k,$ by $\Delta(\tau)$ the unique normalized cusp form of weight $12$ and level $1,$ and by $j(\tau)$ the $j$-invariant. Moreover, if $k\in\Z$ and $f(\tau)$ is a function on $\mathfrak{H},$ we make use of the raising operator
\begin{equation}\label{eq:R(k,tau)}
    R_{k,\tau}f(\tau):=\frac{1}{2\pi i}\frac{\partial f}{\partial\tau}-\frac{k}{4\pi\Im(\tau)}\cdot f(\tau).
\end{equation}

In this notation, we have the following theorem.

\begin{theorem}\cite[Conjecture 4.2]{Pengcheng}\label{thm:SL2(Z)Basis}
Let $a,b,c\in\Z$ with $\gcd(a,b,c)=1, D=b^2-4ac, \alpha_D=\frac{-b+\sqrt{-D}}{2a}  \in\mathfrak{H}$ such that $j(\alpha_D)\neq 0,1728$ and consider the elliptic curve 
$$
E_{j(\alpha_D)}:\ \ \ \ y^2+xy=x^3-\frac{36}{j(\alpha_D)-1728}x-\frac{1}{{j(\alpha_D)-1728}}.
$$ For each prime $p$  which splits in $\Q(\sqrt{-D}, j(\alpha_D))$ such that $E_{\alpha_D}$ has good reduction at $p$, let $\mu_p$ be the $p$-adic unit solution to 
$$
\mu_p+\frac{p}{\mu_p}=p+1-\#E_{j(\alpha_D)}(\F_p).
$$
Moreover, for each $k\in\{4,6,8,10,14\}$ and $0\le r\le k-2,$ let \begin{equation}\label{eq:f(k,r,z)}
    f_{k,r,\alpha_D}(\tau):=E_k(\tau)\cdot R_{2-k,z}^r\left(\frac{E_{14-k}(z)}{\Delta(z)\cdot(j(\tau)-j(z))}\right)\Bigg|_{z=\alpha_D}=:\sum\limits_{n\geq 1} a_{{k},r,\alpha_D}(n)q^n,
\end{equation}
where $R_{2-k,z}^r:=R_{2r-k,z}\circ \cdots \circ R_{4-k,z} \circ R_{2-k,z}.$ 
If $p\nmid (k-2)!$ and $m,s\geq 1,$ then we have
$$
a_{k,r,\alpha_D}(mp^s)\equiv  p^r\mu_p^{k-2-2r}\cdot a_{k,r,\alpha_D}(mp^{s-1}) \pmod{ p^{(k-1)s}}.
$$
\end{theorem}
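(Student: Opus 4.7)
The plan is to derive Theorem~\ref{thm:SL2(Z)Basis} from Corollary~\ref{cor:ASD-basis} applied to $X=X(1)$ (via the standard descent from $X(N)$, $N\ge 3$, noted after Example~\ref{ex:ex1}), with weight $k\in\{4,6,8,10,14\}$ (so that $\dim S_k(\SL_2(\Z))=0$) and $\mathfrak{u}=\{\alpha_D\}$. Under the theorem's hypotheses, the corollary produces a basis $\{g_r\}_{r=0}^{k-2}$ of
\[
V_{\alpha_D}:=S_k(X(1),\star\alpha_D,K)/\partial^{k-1}M_{2-k}(X(1),\star\alpha_D,K)
\]
whose coefficients satisfy two-term $p$-adic congruences $a_{g_r}(mp^s)\equiv\lambda_r\cdot a_{g_r}(mp^{s-1})\pmod{p^{(k-1)s+j_{g_r,p}}}$. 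Since $p$ splits in $\Q(\sqrt{-D})$ and $E_{j(\alpha_D)}$ has good ordinary reduction, Frobenius on $H^1_{dR}(E_{j(\alpha_D)})$ has eigenvalues $\mu_p$ and $p/\mu_p$; hence on the $(k-1)$-dimensional space $\Sym^{k-2}H^1_{dR}(E_{j(\alpha_D)})$ the eigenvalues are precisely $\lambda_r=\mu_p^{k-2-r}(p/\mu_p)^r=p^r\mu_p^{k-2-2r}$ for $r=0,\ldots,k-2$, matching the eigenvalues in the theorem.

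The core step is to identify each $f_{k,r,\alpha_D}$, up to a nonzero scalar and modulo $\partial^{k-1}M_{2-k}$, with $g_r$ in $V_{\alpha_D}$. I would first verify membership $f_{k,r,\alpha_D}\in S_k(X(1),\star\alpha_D,K)$: the factor $E_{14-k}(z)/\Delta(z)$ is weight $2-k$ in $z$ with its only pole at the cusp, the kernel $1/(j(\tau)-j(z))$ contributes the $\tau$-pole at $\alpha_D$ (raised to order $r+1$ after $r$-fold application of $R_{2-k,z}$), and multiplication by $E_k(\tau)$ restores weight-$k$ modularity in $\tau$ and kills the constant Fourier coefficient. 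The key identification then proceeds through the canonical duality (residue pairing at $\alpha_D$) between $V_{\alpha_D}$ and $\Sym^{k-2}H^1_{dR}(E_{j(\alpha_D)})$: iterating the Maass-Shimura operator $R_{2-k,z}^r$ and evaluating at a CM point $z=\alpha_D$ corresponds, under the comparison between nearly holomorphic modular forms and algebraic de Rham cohomology, to extracting the $\omega^{k-2-r}\eta^r$-component of $\Sym^{k-2}H^1_{dR}(E_{j(\alpha_D)})$. The generating kernel $\frac{E_{14-k}(z)E_k(\tau)}{\Delta(z)(j(\tau)-j(z))}$ realizes the reproducing pairing between the two sides, so $f_{k,r,\alpha_D}$ is the element of $V_{\alpha_D}$ dual to the Frobenius eigenvector $\omega^{k-2-r}\eta^r$ with eigenvalue $p^r\mu_p^{k-2-2r}$. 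By the corollary this pins down $f_{k,r,\alpha_D}$ as a nonzero scalar multiple of $g_r$, and the claimed congruence modulo $p^{(k-1)s}$ follows since $j_{g_r,p}\ge 0$.

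The main obstacle is this second step: translating the analytic $R_{2-k,z}^r$ construction into the algebraic $\Sym^{k-2}H^1_{dR}$ picture underlying Corollary~\ref{cor:ASD-basis}. Concretely, one must compare the Maass-Shimura raising operator with the algebraic Gauss-Manin connection on $H^1_{dR}(E^{\mathrm{univ}}/Y(1))$, expand $R_{2-k,z}^r\left(\frac{E_{14-k}(z)}{\Delta(z)(j(\tau)-j(z))}\right)$ as a polynomial in $1/(4\pi\,\Im z)$ whose coefficients are algebraic at $z=\alpha_D$, and interpret its $z=\alpha_D$ specialization as a class in $\Sym^{k-2}H^1_{dR}(E_{j(\alpha_D)})$. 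Once this dictionary is in place, singling out the $r$-th eigencomponent and ensuring compatibility with the quotient by $\partial^{k-1}M_{2-k}$ gives the exact matching needed; the two-term congruence is then an immediate consequence of Corollary~\ref{cor:ASD-basis}.
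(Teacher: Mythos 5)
Your overall strategy coincides with the paper's: apply Corollary~\ref{cor:ASD-basis} with $\mathfrak{u}=\{\alpha_D\}$ and $\dim S_k=0$, and identify each $f_{k,r,\alpha_D}$ with a CM-eigenbasis element via the residue map. However, the step you yourself flag as "the main obstacle" is precisely the mathematical content of the deduction, and you do not supply it. To conclude a \emph{two}-term congruence you must show that the class of $f_{k,r,\alpha_D}$ in $S_k(X,\star\alpha_D,K)/\partial^{k-1}M_{2-k}(X,\star\alpha_D,K)$ maps under $\Res$ to a single eigenvector of the CM action on $\Sym^{k-2}H^1_{dR}(E_{\alpha_D})$ (proportional to $\omega^{k-2-r}\eta_{\mathrm{CM}}^{r}$, with $\eta_{\mathrm{CM}}$ the CM eigenvector complementary to $\omega$); if it were a nontrivial combination of eigenvectors, the congruence would mix eigenvalues and fail. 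The paper proves exactly this by an explicit computation: Lemma~\ref{lem:EkE14-kRel} rewrites the kernel, Lemma~\ref{lem:RaisingOpId} expands $R^r_{2-k,z}$ in powers of $\partial/\partial z$ and $1/\Im z$, Lemma~\ref{lem:LaurentSeries} gives the principal part $r!(\tau-\alpha_D)^{-r-1}$ of each term, Lemma~\ref{lem:FBResidue} converts the Laurent coefficients into de Rham classes, and the binomial theorem assembles them (Lemma~\ref{lem:ResSL2(Z)}); crucially, Lemma~\ref{lem:Exactness} then uses the Legendre relation to show that the quasi-period class $\frac{1}{\Im\alpha_D}-\frac1\pi\wp_{\alpha_D}-\frac\pi3 E_2$ really is a CM eigenvector. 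None of this is automatic from a "reproducing kernel/duality" heuristic, and your proposal gives no argument for it; as written it is an assertion of the conclusion.

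Two further points of bookkeeping are off. First, "the claimed congruence modulo $p^{(k-1)s}$ follows since $j_{g_r,p}\ge 0$" is not enough: Corollary~\ref{cor:ASD-basis} gives $p^{k-1}\gamma_p^{mp^s}\sigma(a(mp^{s-1}))\equiv p\,c_{u,r}\,a(mp^s)\pmod{p^{(k-1)s+j_{f,p}}}$ with $\ord_p(p\,c_{u,r})=k-1-r$, so after dividing you lose $k-1-r$ powers of $p$ and you need the sharper shift $j_{f,p}\ge k+\ord_{\mathfrak u}(f)=k-1-r$ coming from the $\dim S_k(X)=0$ refinement (Proposition~\ref{prop:extraP}), not merely $j\ge 0$. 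Second, matching the set of eigenvalues $\{p^r\mu_p^{k-2-2r}\}$ is not the same as attaching the correct eigenvalue to the correct form: identifying the unit part of $c_{u,r}$ with $\mu_p^{k-2-2r}$ (rather than some other unit with the right valuation) uses the unit-root/Hodge-filtration input the paper cites (1.3 of \cite{Sch88}, Theorem 1 of \cite{HLLT}, via Lemma~\ref{lem:quasiHodge} and Mazur's theorem), together with the residue computation that places $f_{k,r,\alpha_D}$ in the correct graded piece. So the skeleton is right, but the proof is incomplete at its central step and at the modulus count.
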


The proof of Theorem~\ref{thm:main-ASD} relies on the fact that the ring
$\frac{S_k(X,\star u,K)}{\partial^{k-1}M_{2-k}(X,\star u,K)}$ can be described as the first cohomology of a logarithmic de Rham complex. This logarithmic de Rham complex is an extension of the de Rham complex defined by Scholl in \cite{Scholl2} and used by Kazalicki and Scholl in \cite{KS16} for weakly holomorphic modular forms. Using this characterization, one obtains a short exact sequence
$$
\begin{tikzcd}
0 \arrow[r] & {\frac{S_k^{!}(X,K)}{\partial^{k-1}M^{!}_{2-k}(X,K)}} \arrow[r] & {\frac{S_k(X,\star u,K)}{\partial^{k-1}M_{2-k}(X,\star u,K)}} \arrow[r, "\Res"] & \bigoplus\limits_{u\in\mathfrak{u}} \Sym^{k-2}H_{dR}^1(E_u) \arrow[r] & 0,
\end{tikzcd}
$$
where $\Sym^{k-2}H_{dR}^1(E_u)$ is the $(k-2)$-th symmetric power of the de Rham cohomology of the fiber of an elliptic curve lying over the poles and $\Res$ is essentially a residue map.

In order to compute the congruences, we note that, after base-changing to $\Q_p$ (if $p$ splits in $K$), there is a Frobenius operator on the spaces in the short exact sequence, constructed by Scholl, which acts on the $q$-expansions as
$$
F\left(\sum\limits_{n\geq 1}a(n) q^n\right)\equiv p^{k-1}\sum\limits_{n\geq 1}a(n)q^{pn}\mod{\left(q\frac{d}{dq}\right)^{k-1}\Z_p[[q]]}.
$$
The characteristic polynomial of $F$ on $\frac{S_k(X,\star u,\Q_p)}{\partial^{k-1}M_{2-k}(X,\star u,\Q_p)}$ is then computed from the short exact sequence. This gives Theorem~\ref{thm:main-ASD} with an undetermined $j_{f,p}.$ We then apply an integral computation to explicitly determine $j_{f,p}.$

For Corollary~\ref{cor:ASD-basis}, we exploit the global CM action in order to find an ``appropriate'' basis for the action of the Frobenius operator $F.$ Moreover, we use a result of Mazur to determine the behavior of this basis under Frobenius. When the elliptic curve has ordinary reduction, the basis is diagonal and the same result gives the corresponding eigenvalues. In order to determine this basis explicitly, we explicitly determine the meromorphic modular forms whose residues are eigenvectors of the CM action on $\Sym^{k-2}H^1_{dR}(E_u).$

We put all this together to prove Theorem~\ref{thm:main-ASD} and Corollary~\ref{cor:ASD-basis}. Moreover, we use the explicit residue formulas as well as the explicit description of the characteristic polynomial in order to deduce Theorems~\ref{thm:Gamma1(4)Basis} and \ref{thm:SL2(Z)Basis}. 

Finally, we conclude with supplementary discussions on algorithmically diagonalizing de Rham cohomology for an elliptic curve, the examples in the introduction, some present conjectures, and some numerical observations for supersingular primes.

This paper is organized as follows. In Section~\ref{sec:de-Rham-weakly-holomorphic} we recall the necessary constructions and results by Scholl. In Section~\ref{sec:de-Rham-meromorphic}, we extend Scholl's construction to a logarithmic complex and express quotient spaces of meromorphic modular forms as a de Rham cohomology. Doing this, we prove a preliminary version of Theorem~\ref{thm:main-ASD} which has an undetermined shift in the congruence modulus. In Section~\ref{sec:integral-shift-computation}, we perform the necessary integral computations to determine the shift. In Section~\ref{sec:CM-de-Rham}, we prove the necessary lemmas that allow us to deduce Corollary~\ref{cor:ASD-basis} from Theorem~\ref{thm:main-ASD}. In Section~\ref{sec:ERC}, we compute the required residues that allow us to deduce Theorems~\ref{thm:Gamma1(4)Basis} and \ref{thm:SL2(Z)Basis} from the proof of Corollary~\ref{cor:ASD-basis}. In Section~\ref{sec:proofs}, we combine the results of Sections~\ref{sec:de-Rham-meromorphic} through \ref{sec:ExRD} to prove Theorems~\ref{thm:main-ASD}, \ref{thm:Gamma1(4)Basis} and \ref{thm:SL2(Z)Basis} and Corollary~\ref{cor:ASD-basis}. In Section~\ref{sec:ExRD}, we add some supplementary material and discussions.

\section*{Acknowledgment}

The authors are indebted to Jerome W. Hoffman for multiple discussions on the geometry present. The authors are also indebted to Fang-Ting Tu for multiple computations and suggestions which helped in forming the conjectures in this paper. The authors would also like to thank Pengcheng Zhang for sharing his conjectures early on which led to Theorem~\ref{thm:SL2(Z)Basis} as well as introducing the authors to the notion of magnetic modular forms. The second author would also like to thank Frits Beukers for a suggestion for supercongruences which led to her interest in meromorphic modular forms. The authors also thank Wen-Ching Winnie Li, Tong Liu, Ken Ono, and Anthony Scholl for helpful comments and discussions. Long is supported in part by the Simons Foundation grant \#MP-TSM-00002492 and the LSU Michael F. and Roberta Nesbit McDonald Professorship.

\section{de Rham cohomology for weakly holomorphic modular forms}\label{sec:de-Rham-weakly-holomorphic}

In order to prove Theorem~\ref{thm:main-ASD}, given an unramified extension $L/\Q_p$ of degree $r$ with Frobenius morphism $\sigma,$ we describe the quotient $\frac{S_k(X,\star u,L)}{\partial^{k-1}M_{2-k}(X,\star u,L)}$ as a de Rham cohomology group. This cohomology group is finite-dimensional and admits a $\sigma$-linear  Frobenius action $F$ whose effect on $t$-expansions is
$$
F\left(\sum\limits_{n\geq 1} a(n)t^n\right)\equiv p^{k-1}\sum\limits_{n\geq 1}\sigma(a(n))\gamma^{n}t^{pn} \mod{\left(\frac{d}{dt}\right)^{k-1}L[[t]]},
$$
where $\gamma$ is as in (\ref{eq:gammapDef}).  Applying the Cayley--Hamilton theorem to the linear map $F^r$ gives the required congruences. In this section, we recall the necessary background from \cite{Scholl2} that is required for this computation.

\subsection{Algebraic Theory}\label{subsec:alg-theory}

In his proof of Atkin and Swinnerton-Dyer congruences for modular forms on noncongruence subgroups of $\SL_2(\Z),$ Scholl describes the cusp forms on a noncongruence subgroups as a submodule of a de Rham cohomology with nonconstant coefficients. Here we recall the algebraic constructions and results (see Section 2 of \cite{Scholl2} and Section 3 of \cite{KS16}). 

We first describe the situation on $X(N)$ where $N\geq 3.$ We have a group action by $G_N:=\SL(\mu_n\times \Z/N\Z)$ on $X(N)$ such that $\{\pm 1\}$ acts trivially and we have
$$
G_N\backslash X(N)\cong X(1)\left[\frac{1}{N}\right],
$$
where for a curve $X$ and $M\geq 1,$ $X[\frac{1}{M}]$ denotes the base-change of $X$ to $\Spec\Z\left[\frac{1}{M}\right].$ Moreover, the curve $X(N)$ represents a fine moduli problem and we have a generalized universal elliptic curve
$$
\begin{tikzcd}
E^{\univ} \arrow[r, "\pi"] & X(N) \arrow[l, "e", bend left]
\end{tikzcd},
$$
where $e$ is the zero section and $G_N$ acts on $E^{\univ}$ over its action on $X(N).$ If $\Tate(q)/\Z[[q^{1/N}]]$ denotes the Tate curve with $N$ sides, then the canonical level $N$ structure gives us a morphism
$$
\psi:\Spec\Z\left[\frac{1}{N}\right][[q^{1/N}]]\to X(N)
$$
which identifies $\Z\left[\frac{1}{N}\right][[q^{1/N}]]$ with the formal completion of $X(N)$ at the cusp $\underline{\infty}.$ We define $Y(N)$ to be the subscheme that parameterizes actual elliptic curves and we denote by $Z(N)$ the complimentary reduced closed subscheme.

On $Y(N),$ we have the Hodge bundle $\boldsymbol{\omega}:=e^\ast\Omega^1_{E^{\univ}|_{Y(N)}}$ which is a locally free sheaf of $\O_Y$-modules of rank $1$ that admits a canonical extension to $X(N),$ which we also denote by $\boldsymbol{\omega}.$ In this notation, if $R$ is a $\Z\left[\frac{1}{N}\right]$-algebra, the $R$-module of $R$-valued modular forms of weight $k$ on $X(N)$ is defined by 
$$
M_k(X(N),R):=H^0(X(N),\boldsymbol{\omega}^{\otimes k}\otimes R)
$$
and if $k\geq 2,$ the submodule of cusp forms is defined by
$$
S_k(X(N),R):=H^0(X(N),\omega^{\otimes(k-2)}\otimes\Omega_{X(N)}^1\otimes R).
$$
Moreover, if $f$ is an $R$-valued modular form, the  $q$-expansion (at $\underline{\infty}$) is defined as 
$$
\psi^\ast(f)=\left(\sum\limits_{n=0}^\infty a_f(n)q^{\frac{n}{N}}\right)\cdot\omega^k\cdot\frac{dq}{q}\in \Z\left[\frac{1}{N}\right][[q^{1/N}]]\otimes_{\Z} R,
$$
where $\omega$ is a canonical generator for $\psi^\ast\boldsymbol{\omega}.$

The space of cusp forms sits in a short exact sequence that arises from the Hodge filtration of the relative de Rham cohomology of $E^{\univ}$ over $X(N).$ To make this precise, define $\mathcal{E}:=\mathbb{R}^1\pi_\ast\Omega^\bullet_{\pi^{-1}(Y(N))/Y(N)}.$ This sheaf has a canonical extension to a locally free sheaf of $\O_{X(N)}$-modules of rank $2$ and sits in the short exact sequence
$$
\begin{tikzcd}
0 \arrow[r] & \boldsymbol{\omega} \arrow[r] & \mathcal{E} \arrow[r, "\beta"] & \boldsymbol{\omega}^{-1} \arrow[r] & 0
\end{tikzcd}
$$
which ``is'' the Hodge filtration. Moreover, the sheaf $\mathcal{E}$ is equipped with the Gauss--Manin connection
$$
\nabla:\mathcal{E}\to\mathcal{E}\otimes\Omega_{X(N)}^1(\log Z(N))
$$
which has logarithmic poles at the cusps. If $k\geq 3,$ write 
$$
\mathcal{E}_{k-2}:=\Sym^{k-2}\mathcal{E}, \quad \nabla_{k-2}:=\Sym^{k-2}\nabla
$$
and consider the chain-complex
$$
\Omega^\bullet:=[\begin{tikzcd}
\mathcal{E}_{k-2} \arrow[r, "\nabla_{k-2}"] & {\nabla_{k-2}\left(\mathcal{E}_{k-2}\right)+\mathcal{E}_{k-2}\otimes\Omega_{X(N)}^1}
\end{tikzcd}].
$$
In this notation, Scholl proves the following.
\begin{theorem}\cite[Theorem 2.7]{Scholl2}\label{thm:SchollDecomp1}
    If $(k-2)!$ is invertible in $R,$ then the following are true.
    \begin{enumerate}
        \item We have a short exact sequence of $R$-modules
        $$
        \begin{tikzcd}
        0 \arrow[r] & {S_k(X(N),R)} \arrow[r] & {\mathbb{H}^1(X(N),\Omega^{\bullet}\otimes R)} \arrow[r] & {S_k(X(N),R)^\vee} \arrow[r] & 0
        \end{tikzcd},
        $$
        where $\vee$ denotes the $R$-dual and where this is compatible with the action of $G_N.$ Moreover, we have $\mathbb{H}^i(X(N),\Omega^\bullet\otimes R)=0$ for $i\neq 1.$
        \item There is an isomorphism 
        $$
        \begin{tikzcd}
        H^1(\psi^\ast\Omega^\bullet) \arrow[r, "\sim"] & \coker\left(\left(q\frac{d}{dq}\right)^{k-1}:q^{\frac{1}{N}}R[[q^{\frac{1}{N}}]]\to q^{\frac{1}{N}}R[[q^{\frac{1}{N}}]]\right).
        \end{tikzcd}
        $$
        \item (1) and (2) are compatible with base-change.
        \item If $f\in S_k(X(N),R),$ then its class in $H^1(\psi^\ast\Omega^{\bullet}\otimes R)$ is the class of its $q$-expansion.
    \end{enumerate}
\end{theorem}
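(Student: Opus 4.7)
The plan is to compute $\mathbb{H}^\bullet(X(N), \Omega^\bullet \otimes R)$ via the hypercohomology spectral sequence associated to the stupid filtration, $E_1^{p,q} = H^q(X(N), \Omega^p) \Rightarrow \mathbb{H}^{p+q}$. Since $X(N)$ is a smooth projective curve, $H^{\geq 2}$ vanishes for coherent sheaves, so the spectral sequence yields $\mathbb{H}^0 = \ker(H^0(\mathcal{E}_{k-2}) \xrightarrow{\nabla_{k-2}} H^0(\Omega^1))$, a four-term exact sequence
$$0 \to \coker\bigl(H^0(\mathcal{E}_{k-2}) \to H^0(\Omega^1)\bigr) \to \mathbb{H}^1 \to \ker\bigl(H^1(\mathcal{E}_{k-2}) \to H^1(\Omega^1)\bigr) \to 0,$$
and $\mathbb{H}^2 = \coker(H^1(\mathcal{E}_{k-2}) \to H^1(\Omega^1))$. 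The task is then to identify the cokernel with $S_k(X(N), R)$, the kernel with $S_k(X(N), R)^\vee$, and verify $\mathbb{H}^0 = \mathbb{H}^2 = 0$.

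The key structural tools are: the Hodge filtration $0 \to \boldsymbol{\omega} \to \mathcal{E} \to \boldsymbol{\omega}^{-1} \to 0$, which induces on $\mathcal{E}_{k-2} = \Sym^{k-2}\mathcal{E}$ a filtration with graded pieces $\boldsymbol{\omega}^{k-2-2j}$ for $j = 0,\ldots,k-2$; the Kodaira--Spencer isomorphism $\boldsymbol{\omega}^{\otimes 2} \cong \Omega^1_{X(N)}(\log Z(N))$; Griffiths transversality $\nabla(F^i) \subseteq F^{i-1} \otimes \Omega^1_{X(N)}(\log Z)$; and cohomology vanishing for powers of $\boldsymbol{\omega}$, exploiting Serre duality $H^1(\boldsymbol{\omega}^n)^\vee \cong H^0(\boldsymbol{\omega}^{2-n}(-Z))$. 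The assumption that $(k-2)!$ is invertible in $R$ ensures the symmetric power is well-behaved and the relevant extensions in the Hodge filtration split appropriately over $R$. Assembling these ingredients, the image of $\nabla_{k-2}$ on $H^0(\mathcal{E}_{k-2})$ accounts for a subspace of $H^0(\Omega^1)$ whose quotient, via Kodaira--Spencer, is identified with $H^0(\boldsymbol{\omega}^{k-2}\otimes\Omega^1_{X(N)}) = S_k(X(N), R)$; Serre duality combined with the same identifications controls $H^1$ and yields the $S_k^\vee$ term. Compatibility with the $G_N$-action is automatic from functoriality of the construction.

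For part (2), I would pull the complex back along $\psi$ to $\Spec R[[q^{1/N}]]$. The Tate curve $\Tate(q)$ provides a canonical basis $\{\omega, \eta\}$ of relative de Rham cohomology in which the Gauss--Manin connection takes the explicit form $\nabla(\omega) = 0$ and $\nabla(\eta) = \omega \otimes \frac{dq}{q}$, so on the basis $\{\omega^{k-2-j}\eta^j\}$ of $\psi^\ast \mathcal{E}_{k-2}$, the iterated action of $\nabla_{k-2}$ modulo the Hodge filtration is exactly the operator $\left(q\frac{d}{dq}\right)^{k-1}$. This directly identifies $H^1(\psi^\ast \Omega^\bullet)$ with the cokernel of $\left(q\frac{d}{dq}\right)^{k-1}$ on $q^{1/N} R[[q^{1/N}]]$. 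Part (3) is the functoriality of the hypercohomology and of the ingredients above in $R$. Part (4) then follows by tracing through the identifications: a cusp form $f$ is a section of $\boldsymbol{\omega}^k(-Z)$ which pulls back along $\psi$ to its $q$-expansion times the canonical local generator, matching the identification constructed in part (2).

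The main obstacle is part (1): managing the Hodge filtration on $\mathcal{E}_{k-2}$ with all $k-1$ graded pieces simultaneously, carefully tracking how Griffiths transversality interacts with the reduction modulo $\mathcal{E}_{k-2} \otimes \Omega^1$ that defines the complex $\Omega^\bullet$, and verifying that the resulting four-term sequence collapses to exactly the form stated. The invertibility of $(k-2)!$ is what makes the Hodge filtration splittings integral and enters essentially throughout the symmetric-power argument; without it the extensions among the $\boldsymbol{\omega}^{k-2-2j}$ would obstruct identifying the top and bottom graded contributions as $S_k$ and $S_k^\vee$.
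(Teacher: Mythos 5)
Your outline assembles the right ingredients (Hodge filtration on $\Sym^{k-2}\mathcal{E}$, Kodaira--Spencer, Griffiths transversality, Serre duality), but it stops exactly where the theorem lives. Scholl's argument, which the present paper leans on again in the proof of its Lemma 3.2, does not run the spectral sequence of the stupid filtration and then ``identify'' the resulting kernel and cokernel; it filters the complex $\Omega^\bullet$ itself by the symmetric power of the Hodge filtration. The point is that for $1\leq j\leq k-2$ the graded pieces of $\nabla_{k-2}$ are $j$ times the Kodaira--Spencer isomorphism, so the intermediate graded complexes are acyclic precisely because $1,2,\dots,k-2$ are invertible in $R$; only the two extreme graded pieces $\boldsymbol{\omega}^{2-k}$ (in degree $0$) and $\boldsymbol{\omega}^{k-2}\otimes\Omega^1_{X(N)}$ (in degree $1$) survive, and then $H^0(\boldsymbol{\omega}^{2-k})=0$, $H^0(\boldsymbol{\omega}^{k-2}\otimes\Omega^1)=S_k$, and duality give the short exact sequence together with $\mathbb{H}^0=\mathbb{H}^2=0$ and freeness (whence part (3); base change is not mere ``functoriality'' over a general $R$). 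In your plan this whole mechanism is compressed into ``assembling these ingredients, the image of $\nabla_{k-2}$ \dots is identified with $S_k$,'' and you yourself flag it as ``the main obstacle''; that is the missing proof, not a technical afterthought. Note also that the acyclicity of the middle graded pieces depends on the precise definition of the degree-one term as $\nabla_{k-2}(\mathcal{E}_{k-2})+\mathcal{E}_{k-2}\otimes\Omega^1_{X(N)}$ rather than $\mathcal{E}_{k-2}\otimes\Omega^1_{X(N)}(\log Z(N))$; your proposal never engages with this choice, yet without it the graded maps fail to be surjective at the cusps and the count comes out wrong. Relatedly, your account of why $(k-2)!$ must be invertible (``the Hodge filtration splittings become integral'') is off: the filtration is used as a filtration and is never split; the invertibility is needed for the integers appearing in the graded differentials (and for handling $\Sym^{k-2}$ versus its dual in the duality step).

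Two smaller points. In part (2) your explicit Gauss--Manin formulas on the Tate curve are swapped: the canonical differential is not horizontal; rather $\nabla(\omega_{\mathrm{can}})=u\otimes\frac{dq}{q}$ and $\nabla(u)=0$, which is forced by Kodaira--Spencer being an isomorphism. The intended conclusion (iterating $\nabla_{k-2}$ across the $k-1$ graded pieces produces $\left(q\frac{d}{dq}\right)^{k-1}$ and hence the stated cokernel description of $H^1(\psi^\ast\Omega^\bullet)$) is correct, but the computation as written would not produce it. Finally, with your stupid-filtration setup even the vanishing $\mathbb{H}^0=0$ (horizontal global sections of $\mathcal{E}_{k-2}$) is not addressed; it too follows from the graded analysis above, which is further evidence that the Hodge-filtration spectral sequence is the route to take rather than a tool to invoke after the fact.
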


Scholl extends this setting further to encompass non-congruence subgroups and also to a more general chain complex which gives higher congruence modulus for cusp forms.  The above results generalize almost verbatim if we replace $X(N)$ by a modular curve $X'$ which represents a fine moduli problem, and $G_N$ by an appropriate $G'.$\footnote{If $X'=X(N),$ then $G'$ here is just $G_N.$} Now, let $R$ be a $\Z\left[\frac{1}{N}\right]$-algebra, $X$ a smooth projective curve over $R$ equipped with a finite morphism $\kappa:X\to X'\left[\frac{1}{M}\right]$ where $M\geq 1$ and $N\mid M$ such that $\kappa|_{\kappa^{-1}(Y')}:\kappa^{-1}(Y')\to Y'$ is \'etale, where $Y'$ is the subset of $X'$ parameterizing actual elliptic curves. Moreover, we require that there is a section $\underline{\infty}$ such that $\kappa(\underline{\infty})=\underline{\infty}'$ and an action of a closed flat subgroup scheme $G\subset G'\left[\frac{1}{M}\right]$ that is compatible with the action of $G'$ on $X'.$ We denote by $Y$ and $Z$ the preimages of $Y'$ and $Z'$ respectively, and we abuse notation to denote the pullbacks of $\boldsymbol{\omega},\mathcal{E},\nabla$ by the same symbols.

At the cusps of $X,$ there exist $t$-expansion maps. In particular, the formal completion of $\O_X$ along $\underline{\infty}$ is of the form
$$
A := R\left[\frac{1}{M}\right][[t]],\ \ \ D\cdot t^{\nu} = q^{\frac{1}{N}},
$$
where $\nu\geq 1$ is invertible in $\Z\left[\frac{1}{M}\right]$ and $ D\in\Z\left[\frac{1}{M}\right]^\ast.$ As for $X(N),$ we have an associated morphism $\psi:\Spec A\to X$ (with similar morphisms $\psi_z$ at other cusps $z$) and the $R$-module of $R$-valued modular forms on $X$ of weight $k$ is defined as
$$
M_k(X,R):=H^0(X,\omega^{\otimes k}\otimes R)
$$
with the submodule of cusp forms (for $k\geq 2$) defined as
$$
S_k(X,R):=H^0(X,\omega^{\otimes(k-2)}\otimes\Omega_X^1).
$$
The $t$-expansion map is also defined similarly. If $m\geq 1,$ Scholl defines a generalization of $\mathcal{E}$ as
$$
\mathcal{E}_{1,m}:=\beta^{-1}(m\cdot\boldsymbol{\omega}^{-1})
$$
and
$$
\mathcal{E}_{k-2,m}:=\Sym^{k-2}(\mathcal{E}_{1,m}),
$$
where $\beta:\mathcal{E}\twoheadrightarrow\boldsymbol{\omega}^{-1}.$ Finally, consider the chain complex 
$$
\Omega_m^\bullet:=[\begin{tikzcd}
m\cdot \mathcal{E}_{k-2,m} \arrow[r, "\nabla_{k-2}"] & {\nabla_{k-2}\left(m\cdot \mathcal{E}_{k-2,m}\right)+\mathcal{E}_{k-2,m}\otimes\Omega_{X}^1}
\end{tikzcd}].
$$
In this notation, Scholl proves the following generalization of Theorem~\ref{thm:SchollDecomp1}.

\begin{theorem}\cite[Theorem 2.12]{Scholl2}\label{thm:SchollDecomp2}
    If $R$ is a $\Z\left[\frac{1}{M\cdot (k-2)!}\right]$-algebra and $m\geq 1$ is not a zero-divisor in $R,$ then the following are true.
    \begin{enumerate}
        \item We have a short exact sequence of $R$-modules
        $$
        \begin{tikzcd}
        0 \arrow[r] & {S_k(X,R)} \arrow[r] & {\mathbb{H}^1(X,\Omega_m^{\bullet}\otimes R)} \arrow[r] & {m^{k-1}\cdot S_k(X,R)^\vee} \arrow[r] & 0
        \end{tikzcd},
        $$
        and this is compatible with the action of $G.$ Moreover, we have $\mathbb{H}^i(X,\Omega_m^{\bullet}\otimes R)=0$ for $i\neq 1.$
        \item There is an isomorphism 
        $$
        \begin{tikzcd}
        H^1(\psi^\ast\Omega_m^\bullet\otimes R) \arrow[r, "\sim"] & \coker\left(\left(mq\frac{d}{dq}\right)^{k-1}:R[[t]]\to tR[[t]]\right),
        \end{tikzcd}
        $$
        where $q\frac{d}{dq} = \frac{1}{N\nu}t\frac{d}{dt}.$
        \item (1) and (2) are compatible with base-change.
        \item If $f\in S_k(X,R),$ its class in $H^1(\psi^\ast\Omega_m^{\bullet}\otimes R)$ is the class of its $t$-expansion.
    \end{enumerate}
\end{theorem}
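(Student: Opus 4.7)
The plan is to compute the hypercohomology $\mathbb{H}^i(X,\Omega_m^\bullet\otimes R)$ by exploiting the Hodge filtration on $\mathcal{E}_{k-2,m}$ together with the two-term nature of the complex, and then to perform the local calculation at the cusp by writing out the Gauss--Manin connection on the Tate curve. First I would filter $\Sym^{k-2}\mathcal{E}$ using the short exact sequence $0\to\boldsymbol{\omega}\to\mathcal{E}\to\boldsymbol{\omega}^{-1}\to 0$; this yields a decreasing filtration on $\mathcal{E}_{k-2,m}$ whose graded pieces are the line bundles $\boldsymbol{\omega}^{2j-(k-2)}$ (suitably twisted by powers of $m$), and a compatible filtration on $\Omega_m^\bullet$ whose associated graded is a direct sum of two-term complexes on line bundles.

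The key inputs at this stage are the Kodaira--Spencer isomorphism $\boldsymbol{\omega}^{k-2}\otimes\Omega_X^1\cong\boldsymbol{\omega}^k$ on $Y$ (with appropriate logarithmic extensions across the cusps), together with Serre duality on $X$. Via the hypercohomology spectral sequence of the filtered complex, the deepest subsheaf $\boldsymbol{\omega}^{k-2}$ contributes $H^0(X,\boldsymbol{\omega}^{k-2}\otimes\Omega_X^1)=S_k(X,R)$ in degree $1$, and the top quotient $\boldsymbol{\omega}^{-(k-2)}$ contributes $H^1(X,\boldsymbol{\omega}^{-(k-2)})$, which by Serre duality equals $m^{k-1}\cdot S_k(X,R)^\vee$; the factor $m^{k-1}$ arises from the $m$-twist that distinguishes $\mathcal{E}_{1,m}$ from $\mathcal{E}$. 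A degeneration argument, following the weakly-holomorphic case already established in Theorem~\ref{thm:SchollDecomp1}, shows that the middle graded pieces contribute nothing whenever $(k-2)!$ is invertible in $R$, since on each middle piece the induced differential is multiplication by a nonzero integer dividing $(k-2)!$. This yields (1), and the vanishing of $\mathbb{H}^i$ for $i\neq 1$ follows from the same spectral sequence since the complex has length two.

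For (2), I would work locally at $\underline{\infty}$, where $\widehat{\O}_{X,\underline{\infty}}=R[[t]]$ with $Dt^\nu=q^{1/N}$ and $D\in R^\ast$. On the Tate curve $\mathcal{E}$ has a canonical basis $\{\omega,\eta\}$ with $\nabla(\omega)$ a fixed multiple of $\eta\otimes\frac{dq}{q}$, so $\nabla_{k-2}$ acts as a raising operator on the basis $\{\omega^i\eta^{k-2-i}\}$ of $\Sym^{k-2}\mathcal{E}$. Unwinding the definition of $m\cdot\mathcal{E}_{k-2,m}$ and writing $\psi^\ast\Omega_m^\bullet$ in this basis reduces the complex to free $R[[t]]$-modules whose $H^1$ is computed by iterating $\nabla_{k-2}$ applied to $\omega^{k-2}$ a total of $k-1$ times; this iterate produces exactly the operator $(mq\frac{d}{dq})^{k-1}$ on $R[[t]]$ (using $q\frac{d}{dq}=\frac{1}{N\nu}t\frac{d}{dt}$), giving (2). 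Part (3) is automatic from the flatness of each step of the argument, and part (4) follows by tracing through the class construction: a cusp form $f$ maps to $\bigl(0,f\cdot\omega^{k-2}\otimes\frac{dq}{q}\bigr)$ whose image under $\psi^\ast$ is literally its $t$-expansion modulo the image of $\nabla_{k-2}$.

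The main obstacle is the bookkeeping in this local calculation: obtaining the precise operator $(mq\frac{d}{dq})^{k-1}$ rather than some constant multiple requires simultaneously tracking the $m$-twist in the definition of $\mathcal{E}_{k-2,m}$, the role of the unit $D$ relating $t$ and $q^{1/N}$, and the precise normalization of the Gauss--Manin connection on the Tate curve. Once this normalization is pinned down, the surjectivity in the short exact sequence of (1) is equivalent to the vanishing of a certain $H^2$-term in the filtered spectral sequence, which reduces back to the same local computation and completes the proof.
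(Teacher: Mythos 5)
Your sketch is correct and follows essentially the same route as the cited source: this statement is Scholl's Theorem 2.12, which the paper quotes rather than proves, and Scholl's argument (re-used by the paper in the proof of Lemma~\ref{lem:hypercohomology-mdular-comparison}) is exactly your filtration of $\Omega_m^{\bullet}$ by the Hodge filtration, cancellation of the middle graded pieces via Kodaira--Spencer using invertibility of $(k-2)!$, identification of the two surviving pieces with $S_k(X,R)$ and $m^{k-1}S_k(X,R)^\vee$ by duality, and the explicit Gauss--Manin computation on the Tate curve at the cusp for parts (2) and (4). The only points where you are thinner than the source are routine: the vanishing of $\mathbb{H}^i$ for $i\neq 1$ and the surjectivity in (1) come from $H^0(X,\boldsymbol{\omega}^{2-k})=0$ and $H^1(X,\boldsymbol{\omega}^{k-2}\otimes\Omega_X^1)=0$ (positivity of $\deg\boldsymbol{\omega}$ plus duality, and vanishing of $H^2$ on a relative curve), not merely from the complex having length two, and base-change in (3) uses freeness of the resulting modules rather than being automatic.
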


For completeness, we mention that Kazalicki and Scholl describe $\mathbb{H}^1(X,\Omega_m^\bullet\otimes R),$ when $m=1$ and $R$ is a field or a Dedekind domain of characteristic $0,$ in terms of weakly holomorphic modular forms. To make this precise, define the $R$-module of $R$-valued weakly exact cusp forms
\begin{align*}
S_k^{!\text{-ex}}(X,R):=\Bigg\{&f\in M_k^!(X,R) | f\text{ has vanishing constant terms at all the cusps and}\\
&\text{ for every cusp $z$ the principal part of $\psi_z^\ast(f)$ is in the image of $\left(q\frac{d}{dq}\right)^{k-1}$}\Bigg\}.
\end{align*}
In this notation, Kazalicki and Scholl prove the following result.
\begin{theorem}\cite[Theorem 4.3]{KS16}\label{thm:KSChar}
We have an isomorphism
$$
\begin{tikzcd}
{\mathbb{H}^1(X,\Omega_1^\bullet\otimes R)} \arrow[r, "\sim"] & {\frac{S_k^{!\text{-ex}}(X,R)}{\partial^{k-1} M_k^!(X,R)}}.
\end{tikzcd}
$$
\end{theorem}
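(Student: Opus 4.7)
My approach is via the Čech description of the hypercohomology $\mathbb{H}^1(X, \Omega_1^\bullet \otimes R)$ with respect to the two-set cover $\{V, W\}$, where $V = X \setminus Z$ is the complement of the cuspidal subscheme and $W$ is a formal neighborhood of $Z$ along which the $t$-expansion maps $\psi_z^\ast$ are defined. On $W$, the cohomology is governed by the isomorphism $H^1(\psi^\ast \Omega_1^\bullet \otimes R) \cong \coker(\partial^{k-1}: R[[t]] \to tR[[t]])$ supplied by Theorem~\ref{thm:SchollDecomp2}(2). Recall that a Čech $1$-hypercocycle for the two-term complex is a triple $(c_V, c_W, \omega_{VW})$ where $c_V$ and $c_W$ are sections of the degree-one term $\nabla_{k-2}(\mathcal{E}_{k-2,1}) + \mathcal{E}_{k-2,1} \otimes \Omega_X^1$ over $V$ and $W$ respectively, and $\omega_{VW}$ is a section of $\mathcal{E}_{k-2,1}$ on $V \cap W$ satisfying $c_V|_{V\cap W} - c_W|_{V\cap W} = \nabla_{k-2}(\omega_{VW})$; coboundaries come from pairs $(\alpha_V, \alpha_W)$ via $(\nabla(\alpha_V), \nabla(\alpha_W), \alpha_V - \alpha_W)$.

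To construct a natural map $\Phi: S_k^{!\text{-ex}}(X,R) \to \mathbb{H}^1(X,\Omega_1^\bullet\otimes R)$, given $f \in S_k^{!\text{-ex}}(X,R)$ I observe that $f \cdot \omega^{k-2} \cdot \frac{dq}{q}$ is a meromorphic section of $\mathcal{E}_{k-2,1} \otimes \Omega_X^1$ which is holomorphic on $V$. Near each cusp, the weakly exact hypothesis together with the vanishing of the constant term allows a unique decomposition of the $t$-expansion as $\partial^{k-1}(g) + h$ with $g \in t^{-1} R[t^{-1}]$ and $h \in t R[[t]]$. Setting $c_V := f \omega^{k-2} \frac{dq}{q}$, $c_W := h \omega^{k-2} \frac{dq}{q}$, and $\omega_{VW} := -g \omega^{2-k}$ (via the Hodge filtration embedding $\boldsymbol{\omega}^{2-k} \hookrightarrow \mathcal{E}_{k-2,1}$) then produces a $1$-hypercocycle whose class is $\Phi(f)$. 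For the kernel, if $\Phi(f)$ is a coboundary, the local primitives $\alpha_V, \alpha_W$ glue to a global section of $\mathcal{E}_{k-2,1}$ with poles only at cusps, which by Bol's identity corresponds to an element $\tilde\alpha \in M_{2-k}^!(X,R)$ satisfying $\partial^{k-1}(\tilde\alpha) = f$; this shows $\ker(\Phi) = \partial^{k-1} M_{2-k}^!(X,R)$ (matching the quotient in the theorem statement up to the apparent weight typo).

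The main obstacle is surjectivity. Given $\xi \in \mathbb{H}^1(X,\Omega_1^\bullet\otimes R)$, the plan is to exploit the short exact sequence in Theorem~\ref{thm:SchollDecomp2}(1), which embeds $S_k(X,R)$ into $\mathbb{H}^1$ with quotient $S_k(X,R)^\vee$. After subtracting the image of a cusp form (which lies in $S_k^{!\text{-ex}}$ trivially), one may assume $\xi$ pairs nontrivially only with the cuspidal residue data. The $t$-expansion of $\xi$ at each cusp lies in $\coker(\partial^{k-1})$ by Theorem~\ref{thm:SchollDecomp2}(2) and can be represented by a Laurent polynomial $g_z \in t^{-1}R[t^{-1}]$. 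I then need to globalize these principal parts to a weakly holomorphic modular form $f \in M_k^!(X,R)$ with prescribed $\partial^{k-1}(g_z)$-part at each cusp and vanishing constant terms — a Mittag--Leffler problem on the projective curve $X$, solvable by Riemann--Roch. The delicate point is preserving the $R$-structure rather than working only over the fraction field; this is handled via the flatness hypotheses and the base-change compatibility of Theorem~\ref{thm:SchollDecomp2}(3). The resulting $f$ is weakly exact by construction, and $\xi - \Phi(f)$ lies in the image of $S_k(X,R) \subset S_k^{!\text{-ex}}(X,R)$, closing the argument.
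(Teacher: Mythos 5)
Your overall strategy---computing $\mathbb{H}^1(X,\Omega_1^\bullet\otimes R)$ by gluing the affine complement of the cusps to formal neighbourhoods of $Z$, and using the decomposition of the principal part as $\partial^{k-1}(g)$ to build the gluing data---is in the spirit of the argument in \cite{KS16} (note the paper at hand does not reprove this statement; it only quotes \cite[Theorem 4.3]{KS16}), and you are right that the denominator should be read as $\partial^{k-1}M^!_{2-k}(X,R)$. However, your explicit cochain is not a cocycle: the Hodge filtration embeds $\boldsymbol{\omega}^{k-2}$ into $\mathcal{E}_{k-2}$, while $\boldsymbol{\omega}^{2-k}$ is the \emph{quotient}, so ``$-g\,\omega^{2-k}$ via the Hodge filtration embedding'' does not define a section of $\mathcal{E}_{k-2}$. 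Even after lifting along the canonical Tate-curve frame $\{\omega,\xi\}$ near the cusp (where $\nabla\omega=\xi\otimes\frac{dq}{q}$, $\nabla\xi=0$), one gets $\nabla_{k-2}\left(-g\,\xi^{k-2}\right)=-\partial(g)\,\xi^{k-2}\otimes\frac{dq}{q}$, which is not $c_V-c_W=\partial^{k-1}(g)\,\omega^{k-2}\otimes\frac{dq}{q}$. The correct glue is the full lift $\sum_{j=0}^{k-2}(-1)^{k-2-j}\frac{(k-2)!}{j!}\,\partial^{j}(g)\,\omega^{j}\xi^{k-2-j}$, whose image under $\nabla_{k-2}$ is exactly $\partial^{k-1}(g)\,\omega^{k-2}\otimes\frac{dq}{q}$; this is also where the invertibility of $(k-2)!$ in $R$ enters, and it is the precise form of the ``Bol'' mechanism your kernel computation appeals to.

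The genuine gap is surjectivity. The quotient $S_k(X,R)^\vee$ in Theorem~\ref{thm:SchollDecomp2}(1) is not ``cuspidal residue data,'' and Theorem~\ref{thm:SchollDecomp2}(2) identifies the local $H^1$ at a cusp with $tR[[t]]$ modulo $\left(q\frac{d}{dq}\right)^{k-1}R[[t]]$: its elements are classes of \emph{power series} tails (and over a characteristic-zero field this cokernel is even zero), so the claim that the local class of $\xi$ ``can be represented by a Laurent polynomial $g_z\in t^{-1}R[t^{-1}]$'' does not follow from anything you cite and is not the right kind of object. What the proof actually requires---and what \cite{KS16} do, in parallel with Lemma~\ref{lem:hypercohomology-mdular-comparison} of this paper for interior poles---is first to identify $\mathbb{H}^1$ of the affine curve $X\setminus Z$ with $S^!_k(X,R)/\partial^{k-1}M^!_{2-k}(X,R)$ via the Hodge-filtration spectral sequence on an affine, and then to characterize the image of $\mathbb{H}^1(X,\Omega_1^\bullet\otimes R)$ inside it by local conditions at each cusp: vanishing of the constant term (the residue of the logarithmic pole of $\nabla$ at $Z$) and weak exactness of the principal part. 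Your sketch never actually uses either condition, and the Riemann--Roch/Mittag--Leffler step constructs forms with prescribed principal parts without any statement tying those principal parts to the class $\xi$ you started from; the integrality over a Dedekind $R$ is a further point that needs more than an appeal to base-change compatibility.
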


\subsection{\texorpdfstring{$p$-adic Theory}{p-adic Theory}}\label{subsec:pAdicTheory}
In order to prove the Atkin and Swinnerton--Dyer congruences for weakly holomorphic modular forms and cusp forms on noncongruence subgroups, Scholl shows that the $p$-adic completion of $\mathcal{E}_{k-2}$ and $\nabla$ gives an $F$-crystal structure with logarithmic singularities. This induces a Frobenius action $F$ on the cohomology groups $\mathbb{H}^1(X,\Omega_m^{\bullet}\otimes\Z_p).$ Scholl computes the effect on the Fourier expansion explicitly and also computes the characteristic polynomial of $F$ in terms of the characteristic polynomial of Frobenius on an explicit $\ell$-adic representation. Here we recall the necessary $p$-adic theory (for more details, see Sections 3 and 4 of \cite{Scholl2}).

First, let $N\geq 3$ and let $p\nmid 2N$ be a prime. Let $(\cdot)^\infty$ denote $p$-adic completion. Scholl shows (see Proposition 3.2 of \cite{Scholl2}) that $(\mathcal{E}^\infty,\nabla^\infty)$ is the underlying differential equation of an $F$-crystal with logarithmic singularities on $Z(N)^\infty$ (for the definition of $F$-crystals with logarithmic singularities, see 1.5 of \cite{Scholl1}). Moreover, if $x_0\in X(N)(\F_p),$ $x\in X(N)(\Z_p)$ is a lift of $x_0,$ and $\phi$ is a lift of the absolute Frobenius on $X\otimes\F_p$ with $\phi(x)=x,$ then on the stalk $\mathcal{E}_x^\infty,$ the characteristic polynomial of $F_x$ (this is independent of the choice of $x$ and $\phi$) is given by (see Section 5.1 of \cite{Scholl1} and the construction in Section 3 of \cite{Scholl2})
$$
T^2-a_x(p)T+p,
$$
where $a_x(p):=p+1-\#E^{\univ}_x(\F_p).$ Now, let $X,G,M,\ldots$ be as in Subsection~\ref{subsec:alg-theory}. Using the fundamental theorem of proper morphisms, Scholl proves the following theorem.

\begin{theorem}\cite[Theorem 3.6]{Scholl2}
    Let $k\geq 3,$ $p\nmid M\cdot(k-2)!,$ and $m\in\{1,p\}.$ Moreover, let $L/\Q_p$ be an unramified extension of $\Q_p$ with ring of integers $\mathfrak{o}_L$ and Frobenius morphism $\sigma.$ There is a canonical endomorphism $F$ of $\mathbb{H}^1(X,\Omega^{\bullet}_m\otimes\mathfrak{o}_L)$ which commutes with the action of $G.$ Moreover, if $\rho$ denotes the canonical map (arising from taking $t$-expansions)
    $$
    \rho:\mathbb{H}^1(X,\Omega^{\bullet}_m\otimes\mathfrak{o}_L)\to H^1(\psi^\ast\Omega_{m}^\bullet\otimes\mathfrak{o}_L)\cong\frac{t\mathfrak{o}_L[[t]]}{\left(mq\frac{d}{dq}\right)^{k-1}\mathfrak{o}_L[[t]]}
    $$
    and if $f\in\mathbb{H}^1(X,\Omega_m^{\bullet}\otimes\mathfrak{o}_L)$ with
    $$
    \rho(f)\equiv\sum\limits_{n\geq 1} a_f(n)t^n \mod \left(mq\frac{d}{dq}\right)^{k-1}\mathfrak{o}_L[[t]],
    $$
    then we have
    $$
    \rho(F(f))\equiv \sum\limits_{n\geq 1}p^{k-1}\cdot \sigma(a_f(n))\gamma_p^nt^{np}\mod \left(mq\frac{d}{dq}\right)^{k-1}\mathfrak{o}_L[[t]].
    $$
\end{theorem}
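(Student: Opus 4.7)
The plan is to build the Frobenius on hypercohomology from the $F$-crystal structure on $\mathcal{E}$ by symmetric-power functoriality, and then compute the local effect on $t$-expansions by analyzing the Frobenius lift on the formal neighborhood of the cusp $\underline{\infty}$. The geometric input is already in place: Scholl's Proposition 3.2 (cited just above the theorem) gives that $(\mathcal{E}^\infty,\nabla^\infty)$ is the underlying module of an $F$-crystal with logarithmic singularities along $Z(N)^\infty$, and this structure pulls back to $X$ via the étale-over-$Y(N)$ morphism $\kappa$.

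First I would promote the $F$-crystal on $\mathcal{E}^\infty$ to one on $\mathcal{E}_{k-2}^\infty = \Sym^{k-2}\mathcal{E}^\infty$ by taking symmetric powers of the Frobenius; the hypothesis $p\nmid(k-2)!$ ensures that $\Sym^{k-2}$ behaves well integrally. Next I would verify that the Frobenius respects the twisted sheaves $m\cdot\mathcal{E}_{k-2,m}$ and the chain complex $\Omega_m^\bullet$. This is where the restriction $m\in\{1,p\}$ is used: on the Tate curve the crystalline Frobenius sends the Hodge line $\boldsymbol{\omega}^\infty$ to itself and acts on the quotient $\boldsymbol{\omega}^{-1}$ by multiplication by $p$, so a pole of order $m=1$ is sent into a pole of order $p$ — hence $F$ maps $\mathcal{E}_{1,1}$ into $\mathcal{E}_{1,p}$ and (trivially) $\mathcal{E}_{1,p}$ into $\mathcal{E}_{1,p}$. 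Combined with $F$-compatibility of $\nabla_{k-2}$ on the crystal, this yields an endomorphism of the complex $\Omega_m^\bullet$. By the general principle that $F$-crystal structure descends to de Rham hypercohomology on proper schemes (as set up in Section 1.5 of \cite{Scholl1}), one obtains the canonical $\sigma$-linear endomorphism $F$ of $\mathbb{H}^1(X,\Omega_m^\bullet\otimes\mathfrak{o}_L)$, and its $G$-equivariance is inherited from equivariance of the crystal.

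For the effect on $t$-expansions, I would localize at $\underline{\infty}$. A lift $\phi$ of absolute Frobenius on $\widehat{\O_{X,\underline{\infty}}}=\mathfrak{o}_L[[t]]$ compatible with $q\mapsto q^p$ on the pulled-back Tate curve is forced by the relation $Dt^\nu=q^{1/N}$: indeed if $\phi(t)=ut^p$ then $\sigma(D)(ut^p)^\nu=D^pt^{\nu p}$, so $u^\nu=D^p/\sigma(D)$, and the unique $u\in 1+p\mathfrak{o}_L$ solving this is $\gamma_p$. Thus $\phi(t)=\gamma_p t^p$. On the Tate curve there is a canonical basis $\{\omega_{\mathrm{can}},\eta_{\mathrm{can}}\}$ of $\psi^\ast\mathcal{E}^\infty$ with the property that the crystalline Frobenius fixes $\omega_{\mathrm{can}}$ and scales $\eta_{\mathrm{can}}$ by $p$. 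Transporting along the isomorphism of Theorem~\ref{thm:SchollDecomp2}(2), a class with $t$-expansion $\sum a_f(n)t^n$ corresponds, after applying $\nabla_{k-2}$, to a differential of the form $\bigl(\sum a_f(n)t^n\bigr)\omega_{\mathrm{can}}^{k-2}\cdot \tfrac{dq}{q}$; applying the crystalline Frobenius scales this by $p^{k-1}$ ($p^{k-2}$ from $\Sym^{k-2}$ on the quotient part, times $p$ from the differential $dq/q \mapsto p\,dq/q$), while the $\sigma$-linear pullback along $\phi$ replaces each coefficient $a_f(n)$ by $\sigma(a_f(n))$ and each $t^n$ by $\gamma_p^n t^{np}$. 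This yields the claimed formula modulo the image of $(mq\,d/dq)^{k-1}$, which is the precise quotient appearing in Theorem~\ref{thm:SchollDecomp2}(2).

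The main obstacle is the final bookkeeping: ensuring that the composite of the three operations (symmetric-power Frobenius on $\mathcal{E}_{k-2}$, the $p$ from the log $\Omega^1$-twist, and the $\sigma$-linear pullback $t\mapsto \gamma_p t^p$) produces exactly the constant $p^{k-1}$ and exactly the factor $\gamma_p^n$ on the $n$-th coefficient, together with the integrality claim that $F$ preserves $\mathbb{H}^1(X,\Omega_m^\bullet\otimes\mathfrak{o}_L)$ and not merely its base change to $L$. Integrality requires one to check that on the log extension $\mathcal{E}_{1,m}$ the Frobenius divided by $p^{\,?}$ lands in the correct integral lattice — this is the reason the $m\in\{1,p\}$ hypothesis is not cosmetic, and it is the point that must be verified by a direct calculation using the explicit description of $\mathcal{E}$ near the cusps of the Tate curve.
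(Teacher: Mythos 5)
First, a point of comparison: the paper does not prove this statement at all --- it is quoted verbatim from Scholl (\cite[Theorem 3.6]{Scholl2}) as background, so the only thing to measure your sketch against is Scholl's construction as summarized in Subsection~\ref{subsec:pAdicTheory}: the $F$-crystal with logarithmic singularities on $\mathcal{E}^\infty$, its symmetric powers, the induced Frobenius on hypercohomology, and the explicit local computation at the cusp. Your outline does follow that route, and your derivation of the Frobenius lift $\phi(t)=\gamma_p t^p$ from $D t^{\nu}=q^{1/N}$, giving the factor $\gamma_p^{n}$ on the $n$-th coefficient, is exactly the right local normalization.

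However, the decisive local computation is carried out with the Frobenius normalization on the Tate curve reversed, and this breaks the bookkeeping at precisely the step that is supposed to produce $p^{k-1}$. The crystalline Frobenius on $\psi^{\ast}\mathcal{E}^{\infty}$ satisfies $F(\omega_{\mathrm{can}})=p\,\omega_{\mathrm{can}}$ and acts as a unit on the complementary (unit-root) direction; this is forced by Mazur's theorem on Frobenius and the Hodge filtration --- the same divisibility this paper invokes in Lemma~\ref{lem:indDecomp} --- and is consistent with the characteristic polynomial $T^{2}-a_x(p)T+p$ recalled in Subsection~\ref{subsec:pAdicTheory}. You assert the opposite (``fixes $\omega_{\mathrm{can}}$, scales $\eta_{\mathrm{can}}$ by $p$''); with your convention the representative $\bigl(\sum_{n}a_f(n)t^{n}\bigr)\omega_{\mathrm{can}}^{k-2}\,\frac{dq}{q}$ would only pick up the single factor $p$ from $dq/q$, and the missing $p^{k-2}$ cannot come ``from $\Sym^{k-2}$ on the quotient part,'' since that representative lies in the $(k-2)$-nd power of the Hodge line, not in the quotient. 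With the correct normalization the factor $p^{k-1}=p^{k-2}\cdot p$ falls out for the stated reason, so the error is fixable, but as written the argument is internally inconsistent. The same reversed convention underlies your explanation of $m\in\{1,p\}$: the claim that $F$ maps $\mathcal{E}_{1,1}$ into $\mathcal{E}_{1,p}$ fails on the unit-root direction, where $\beta\circ F$ is a unit at ordinary points. Finally, you defer the integrality of $F$ on $\mathbb{H}^1(X,\Omega_m^{\bullet}\otimes\mathfrak{o}_L)$ (rather than on its base change to $L$) to ``a direct calculation''; but that integrality --- which is exactly where the lattices $m\mathcal{E}_{k-2,m}$, Frobenius divisibility on the Hodge filtration, and the behavior at supersingular fibres enter --- is the substantive content of Scholl's theorem, so the heart of the statement is left unproved in your sketch.
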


Moreover, Scholl provides a ``comparison theorem'' where the characteristic polynomial of $F$ is shown to be the characteristic polynomial of Frobenius acting on the cohomology of an explicit $\ell$-adic representation. We recall the construction. Let $N\geq 3,$ $M$ as before, $p\nmid 2M,$ and $\ell\neq p$ be a prime. Consider the following diagram
$$
\begin{tikzcd}
{E^{\univ}|_{Y(N)\left[\frac{1}{M\ell}\right]}} \arrow[d, "{\pi|_{Y(N)\left[\frac{1}{M\ell}\right]}}"'] & {Y\left[\frac{1}{\ell}\right]} \arrow[ld, "{h|_{Y\left[\frac{1}{\ell}\right]}}"] \arrow[rr, "j'"] &                                    & {X\left[\frac{1}{\ell}\right]} \arrow[ld, "h"] \arrow[dd, "c"] \\
{Y(N)\left[\frac{1}{M\ell}\right]} \arrow[rr, "j"', hook]                                               &                                                                                                   & {X(N)\left[\frac{1}{M\ell}\right]} &                                                                \\
&                                                                                                   &                                    & {\Spec\Z\left[\frac{1}{M\ell}\right]}                         
\end{tikzcd}
$$
In this notation, if $k\geq 3$ and $\mathcal{F}_{k-2}:=\Sym^{k-2}(R^1(\pi|_{Y(N)\left[\frac{1}{M\ell}\right]})_\ast\Q_\ell),$ then $\mathcal{W}_{k-2}:=R^1c_\ast(h^\ast j_\ast\mathcal{F}_{k-2})$ is a lisse $\ell$-adic sheaf on $\Spec\Z\left[\frac{1}{M\ell}\right].$ The associated $\Gal(\overline{\Q}/\Q)$ representation is unramified at $p$ and by choosing a prime $\mathfrak{p}$ of $\overline{\Q}$ lying over $p,$ we have isomorphisms
$$
G(\overline{\Q})\xrightarrow{\sim} G(\overline{\F}_p)\ \ \ \ \ \ \ \text{ and }\ \ \ \ \ \ \ \mathcal{W}_{k-2}(\overline{\Q})\xrightarrow{\sim} \mathcal{W}_{k-2}(\overline{\F}_p)
$$
which take $\Frob_{\mathfrak{p}}$ to the geometric Frobenius $F_p.$ In this notation, Scholl proves the following result.
\begin{prop}\cite[Proposition 4.4]{Scholl2}\label{prop:charF}
    If $G'$ is a closed subgroup scheme of $G,$ then we have
    $$
    \det(T-F_p|\mathcal{W}_{k-2}(\overline{\F_p})^{G'})=\det(T-F|\mathbb{H}^1(X,\Omega^{\bullet}\otimes L)^{G'})\in\Z[T].
    $$
\end{prop}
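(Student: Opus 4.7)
The plan is to derive this comparison of characteristic polynomials by realizing both cohomologies as pieces of the cohomology of a single smooth projective variety---a Kuga--Sato--type variety---and then invoking the standard Weil-cohomology comparison between $\ell$-adic \'etale Frobenius and crystalline Frobenius (which recovers the de Rham Frobenius after base change to $L$).

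First, I would reduce to the case $X=X(N)$ and $G'=\{1\}$. Both $F_p$ on $\mathcal{W}_{k-2}$ and Scholl's $F$ on $\mathbb{H}^1(X,\Omega^\bullet\otimes L)$ commute with the $G$-action by construction, so applying the same idempotent associated with $G'$ on each side reduces the claim to matching the full Frobenius on the ambient cohomology before taking invariants. The structure morphism $\kappa:X\to X(N)[1/M]$ is \'etale on the open part, and both $\mathcal{F}_{k-2}$ and the filtered module with connection $(\mathcal{E}_{k-2},\nabla_{k-2})$ are pulled back from $X(N)$; compatibility of the two Frobenius constructions under this pullback then reduces the task to the case of $X(N)$ itself.

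Next, over $X(N)$, I would introduce the Kuga--Sato variety $W_{k-2}$, a smooth projective model obtained by a toroidal desingularization of the $(k-2)$-fold fiber self-product of $E^{\univ}$. The symmetric group $\mathfrak{S}_{k-2}$ acts by permuting factors, and the $\Sym^{k-2}$-isotypic component in the Leray spectral sequence for the composition $W_{k-2}\to X(N)\to\Spec\Z[1/(M\ell)]$ identifies $\mathcal{W}_{k-2}$ with a direct summand of the $\ell$-adic cohomology $H^{k-1}_{\mathrm{et}}(W_{k-2,\overline{\Q}},\Q_\ell)$. In parallel, Scholl's logarithmic complex $\Omega^\bullet$ is by construction $\Sym^{k-2}$ of the relative de Rham cohomology of $E^{\univ}/X(N)$ with its Gauss--Manin connection (carrying logarithmic poles at the cusps), so the relative de Rham Leray spectral sequence realizes $\mathbb{H}^1(X(N),\Omega^\bullet\otimes L)$ as the matching $\Sym^{k-2}$-summand of $H^{k-1}_{dR}(W_{k-2}/L)$.

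Finally, I would invoke the comparison between \'etale and crystalline cohomology for smooth projective varieties with good reduction (Katz--Messing, refined by Fontaine--Messing). This yields that the characteristic polynomials of geometric Frobenius on $H^{k-1}_{\mathrm{et}}$ and on $H^{k-1}_{\mathrm{crys}}\cong H^{k-1}_{dR}$ agree, and the equality is preserved after restricting to the $\mathfrak{S}_{k-2}$-isotypic piece and then to $G'$-invariants. The principal obstacle is confirming that Scholl's Frobenius $F$, constructed abstractly via his log $F$-crystal $(\mathcal{E}_{k-2}^\infty,\nabla_{k-2}^\infty)$, agrees under the above identifications with the crystalline Frobenius on the Kuga--Sato variety. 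This requires showing that the log $F$-crystal structure of Proposition~3.2 of \cite{Scholl2} coincides with the relative crystalline cohomology of $E^{\univ}/X(N)$ equipped with its canonical Frobenius, and that this identification extends across the toroidal compactification used to build $W_{k-2}$; integrality of the resulting polynomial then follows from the fact that the Frobenius on the $\ell$-adic side acts on a $\Z_\ell$-lattice that is independent of $\ell$.
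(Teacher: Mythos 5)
There is a genuine gap, and it occurs at the very first step. Note first that the paper does not reprove this statement; it quotes it from \cite{Scholl2}, where the comparison is carried out directly over the curve $X$ with the log-$F$-crystal and the $\ell$-adic sheaf. Your reduction ``to the case $X=X(N)$ and $G'=\{1\}$'' is not valid. In Scholl's setting (and the one used here), $X$ is a finite \emph{covering} of $X(N)$ --- for a noncongruence $\Gamma$ one takes essentially $X=X_{\Gamma\cap\Gamma(N)}$ --- and both sides of the proposition, $\mathcal{W}_{k-2}=R^1c_*(h^*j_*\mathcal{F}_{k-2})$ and $\mathbb{H}^1(X,\Omega^\bullet\otimes L)$, are cohomology \emph{over $X$}; only the coefficient sheaves are pulled back from $X(N)$. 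Pulling back coefficients along a finite map of degree $>1$ does not identify cohomology over $X$ with cohomology over $X(N)$, and no idempotent in the $G$-action recovers the former from the latter: taking $G'$-invariants cuts the $X$-cohomology down (e.g.\ to the noncongruence piece corresponding to $S_k(\Gamma)$), it does not build it up from congruence-level data. The case $G'=\{1\}$ over a noncongruence cover is exactly the point of the proposition, and it is lost in your reduction. A Kuga--Sato argument, if one wants it, must be run over $X$ itself, using the pulled-back universal curve.

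Even in the congruence case $X=X(N)$, the two identifications your plan rests on are the real content and are left unproved. First, $\mathbb{H}^1(X,\Omega^\bullet\otimes L)$ is the hypercohomology of Scholl's specific complex (built to compute parabolic cohomology), and identifying it with the $\Sym^{k-2}$-isotypic summand of $H^{k-1}_{dR}$ of a smooth compactification of the fibre power of $E^{\univ}$ requires controlling the boundary of the toroidal resolution and showing that the relevant projector kills the extra boundary classes; this is the substance of Scholl's later construction of motives for modular forms, not a routine Leray argument. Second, you yourself name the agreement of Scholl's abstractly constructed $F$ (coming from the log $F$-crystal of \cite{Scholl2}, Proposition 3.2) with the crystalline Frobenius of the Kuga--Sato model as ``the principal obstacle,'' but no argument is given; together with the point above, this is where essentially all of the work lies, so the proposal assumes what it needs to prove. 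Two smaller issues: Katz--Messing compares full cohomology groups, so passing to the $\mathfrak{S}_{k-2}$- and $G'$-pieces needs an argument that the projectors are algebraic correspondences acting compatibly in both theories (e.g.\ by comparing traces of powers of Frobenius composed with the projector); and the closing integrality remark is not correct as stated --- a ``$\Z_\ell$-lattice independent of $\ell$'' does not make sense; integrality comes from $\ell$-independence and rationality together with $p$-integrality on the crystalline side.
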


\begin{remark}
    The proof of Scholl extends to the case where $\Q$ is replaced by an algebraic number field $K$ whose $p$-adic completion $K_p$ is an unramified extension of $\Q_p.$
\end{remark}

\section{Meromorphic modular forms and de Rham cohomology}\label{sec:de-Rham-meromorphic}

Here we obtain a preliminary version of Theorem~\ref{thm:main-ASD}. To this end, in the notation of Section~\ref{sec:de-Rham-weakly-holomorphic}, let $\mathfrak{u}\subset Y(K)$ be a finite subset of closed points, and consider the chain complex
$$
\Omega_{\mathfrak{u}}^\bullet:=[\begin{tikzcd}
\mathcal{E}_{k-2} \arrow[r, "\nabla_{k-2}"] & {\nabla_{k-2}\left(\mathcal{E}_{k-2}\right)+\mathcal{E}_{k-2}\otimes\Omega_{X}^1(\log\mathfrak{u})}
\end{tikzcd}].
$$
If $R$ is a flat $\Z\left[\frac{1}{M\cdot (k-2)!}\right]$-algebra, our desired space $\frac{S_k(X,\star \mathfrak{u},R)}{\partial^{k-1}M_{2-k}(X,\star \mathfrak{u}, R)}$ will be described in terms of the cohomology group $\mathbb{H}^1(X,\Omega^\bullet_{\mathfrak{u}}\otimes R).$ In what follows, we assume that $u\in\mathfrak{u}$ are defined over $R.$

\begin{remark}
    We note that in recent work, Fonseca and Brown \cite{FonsecaBrown} express this space, when $R$ is a field, in terms of the same cohomology group. We write the details here for completeness.
\end{remark}

We first compute the rank of the cohomology group. 

\begin{lemma}\label{lem:hypercohomology-free}
$\mathbb{H}^1(X,\Omega^\bullet_{\mathfrak{u}}\otimes R)$ is a free $R$-module of rank $(2\dim S_k(X)+(k-1)\cdot\#\mathfrak{u})$ and if $i\neq 1,$ then we have that $\mathbb{H}^i(X,\Omega^\bullet_{\mathfrak{u}}\otimes R)=0.$
\end{lemma}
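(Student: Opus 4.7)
The plan is to compare the complex $\Omega^\bullet_{\mathfrak{u}}$ with Scholl's complex $\Omega^\bullet_1$ (the $m=1$ case of Theorem~\ref{thm:SchollDecomp2}) and extract the extra contribution at $\mathfrak{u}$ via a hypercohomology long exact sequence. First, I would exhibit a short exact sequence of complexes of sheaves on $X$,
$$
0 \to \Omega^\bullet_1 \otimes R \to \Omega^\bullet_{\mathfrak{u}} \otimes R \to Q^\bullet \to 0.
$$
The two complexes agree in degree $0$, and the natural inclusion in degree $1$ comes from $\Omega^1_X \subset \Omega^1_X(\log \mathfrak{u})$. Since $\mathfrak{u}$ is disjoint from the cuspidal subscheme $Z$ (as $\mathfrak{u} \subset Y(K)$), the Gauss--Manin connection $\nabla_{k-2}$ has no pole along $\mathfrak{u}$, so locally at each $u \in \mathfrak{u}$ the degree-$1$ term of $\Omega^\bullet_1$ reduces to $\mathcal{E}_{k-2} \otimes \Omega^1_X$ and that of $\Omega^\bullet_{\mathfrak{u}}$ to $\mathcal{E}_{k-2} \otimes \Omega^1_X(\log \mathfrak{u})$. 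Hence the quotient complex $Q^\bullet$ is concentrated in degree $1$ with $Q^1 \cong \mathcal{E}_{k-2} \otimes (\Omega^1_X(\log \mathfrak{u})/\Omega^1_X)$; via the Poincar\'e residue this is a skyscraper sheaf supported on $\mathfrak{u}$ whose stalk at each $u$ is the rank-$(k-1)$ free $R$-module $\mathcal{E}_{k-2}|_u \otimes R$.

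Next, I would take hypercohomology. Because $Q^1$ has $0$-dimensional support, $\mathbb{H}^i(X, Q^\bullet) = H^{i-1}(X, Q^1) = 0$ for $i \neq 1$, and
$$
\mathbb{H}^1(X, Q^\bullet) = H^0(X, Q^1) = \bigoplus_{u \in \mathfrak{u}} \mathcal{E}_{k-2}|_u \otimes R
$$
is free of rank $(k-1)\cdot \#\mathfrak{u}$. By Theorem~\ref{thm:SchollDecomp2} applied with $m=1$, we have $\mathbb{H}^i(X, \Omega^\bullet_1 \otimes R) = 0$ for $i \neq 1$, and $\mathbb{H}^1(X, \Omega^\bullet_1 \otimes R)$ is an extension of $S_k(X,R)^\vee$ by $S_k(X,R)$, hence free of rank $2\dim S_k(X)$.

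Finally, the long exact sequence in hypercohomology collapses to
$$
0 \to \mathbb{H}^1(X, \Omega^\bullet_1 \otimes R) \to \mathbb{H}^1(X, \Omega^\bullet_{\mathfrak{u}} \otimes R) \to H^0(X, Q^1) \to 0,
$$
with $\mathbb{H}^i(X, \Omega^\bullet_{\mathfrak{u}} \otimes R) = 0$ for $i \neq 1$. Since the right-hand term is a free $R$-module, this short exact sequence of $R$-modules splits, so the middle is free of the required rank $2\dim S_k(X) + (k-1)\cdot \#\mathfrak{u}$. I expect the main technical point to be the local identification of $Q^\bullet$ at the interior poles $\mathfrak{u}$ — in particular checking that $\nabla_{k-2}(\mathcal{E}_{k-2})$ contributes nothing beyond $\mathcal{E}_{k-2} \otimes \Omega^1_X$ in a neighborhood of each $u$; once this is in place, the freeness and rank follow formally from Scholl's results and the splitting of the hypercohomology exact sequence.
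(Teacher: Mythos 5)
Your proposal is correct and follows essentially the same route as the paper: the paper likewise forms the short exact sequence of complexes $0\to\Omega^\bullet\to\Omega^\bullet_{\mathfrak{u}}\to R_{k-2,\mathfrak{u}}[-1]\to 0$ (your $Q^\bullet$, a degree-$1$ skyscraper with stalks $\mathcal{E}_{k-2,u}\otimes R$ via the residue map), then uses the hypercohomology long exact sequence together with the vanishing and freeness from Theorem~\ref{thm:SchollDecomp2} and the splitting over the free quotient. Your explicit local check that $\nabla_{k-2}(\mathcal{E}_{k-2})$ contributes nothing extra near $\mathfrak{u}$ (since $\mathfrak{u}$ is disjoint from the cusps) is exactly the point the paper leaves implicit in asserting its commutative diagram.
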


\begin{proof}
First, we denote by $R_{k-2,\mathfrak{u}}$ the skyscraper sheaf supported on $\mathfrak{u}$ whose stalk at each $u\in\mathfrak{u}$ is the stalk $\mathcal{E}_{k-2,u}$ of $\mathcal{E}_{k-2}$ at $u.$ Denote by $\Res_{\mathfrak{u}}$ the residue map over $\mathfrak{u}$
$$
\Res_{\mathfrak{u}}:\mathcal{E}_{k-2}\otimes\Omega_X^1(\log (Z\cup\mathfrak{u}))\to R_{k-2,\mathfrak{u}}.
$$
It is clear that the following diagram commutes.
$$
\begin{tikzcd}
0 \arrow[r] & \mathcal{E}_{k-2} \arrow[r, "\id"] \arrow[d, "\nabla_{k-2}"]                       & \mathcal{E}_{k-2} \arrow[r] \arrow[d, "\nabla_{k-2}"]                                                                 & 0 \arrow[r] \arrow[d]            & 0 \\
0 \arrow[r] & \nabla_{k-2}(\mathcal{E}_{k-2})+\mathcal{E}_{k-2}\otimes\Omega_X^1 \arrow[r, hook] & \nabla_{k-2}(\mathcal{E}_{k-2})+\mathcal{E}_{k-2}\otimes\Omega_X^1(\log\mathfrak{u}) \arrow[r, "\Res_{\mathfrak{u}}"] & {R_{k-2,\mathfrak{u}}} \arrow[r] & 0,
\end{tikzcd}
$$
where $\id$ is the identity map and the first map in the second row is the natural inclusion. This implies that we have a short exact sequence of chain complexes
$$
\begin{tikzcd}
0 \arrow[r] & \Omega^{\bullet} \arrow[r,hook] & \Omega^{\bullet}_{\mathfrak{u}} \arrow[r] & {R_{k-2,\mathfrak{u}}[-1]} \arrow[r] & 0,
\end{tikzcd}
$$
where $R_{k-2,\mathfrak{u}}[-1]:=[0\longrightarrow R_{k-2,\mathfrak{u}}].$ This induces a long exact sequence on hypercohomology
$$
\begin{tikzcd}[row sep=2ex, column sep=3ex]
             & 0 \arrow[r]                                                        & {\mathbb{H}^0(X,\Omega^\bullet\otimes R)} \arrow[r]                & {\mathbb{H}^0(X,\Omega_{\mathfrak{u}}^\bullet\otimes R)} \arrow[r] & 0                                         \\
& \arrow[r] & {\mathbb{H}^1(X,\Omega^\bullet\otimes R)} \arrow[r]                & {\mathbb{H}^1(X,\Omega_{\mathfrak{u}}^\bullet\otimes R)} \arrow[r] & {H^0(X,R_{k-2,\mathfrak{u}}\otimes R)} \arrow[r]                   & {\mathbb{H}^2(X,\Omega^\bullet\otimes R)} \\
& \arrow[r] & {\mathbb{H}^2(X,\Omega_{\mathfrak{u}}^\bullet\otimes R)} \arrow[r] & {H^1(X,R_{k-2,\mathfrak{u}}\otimes R)} \arrow[r]                   & 0                                                                  &                                          
\end{tikzcd}
$$
By (1) of Theorem~\ref{thm:SchollDecomp2}, we have that $\mathbb{H}^0(X,\Omega^\bullet\otimes R)=\mathbb{H}^2(X,\Omega^\bullet\otimes R)=0.$ Clearly $H^0(X,R_{k-2,\mathfrak{u}}\otimes R)=\bigoplus\limits_{u\in\mathfrak{u}}\mathcal{E}_{k-2,u}\otimes R.$ Moreover, since $R_{k-2,\mathfrak{u}}$ is a skyscraper sheaf, it is acyclic and we have that $H^1(X,R_{k-2,\mathfrak{u}})=0.$ Therefore, we have that $\mathbb{H}^i(X,\Omega^\bullet_{\mathfrak{u}}\otimes R)=0$ for $i\neq 1$ and a short exact sequence
\begin{equation}\label{eq:sesAlgLog}
\begin{tikzcd}
0 \arrow[r] & {\mathbb{H}^1(X,\Omega^\bullet\otimes R)} \arrow[r] & {\mathbb{H}^1(X,\Omega_{\mathfrak{u}}^\bullet\otimes R)} \arrow[r] & {\bigoplus\limits_{u\in\mathfrak{u}}\mathcal{E}_{k-2,u}\otimes R} \arrow[r] & 0.
\end{tikzcd}
\end{equation}
Since ${\bigoplus\limits_{u\in\mathfrak{u}}\mathcal{E}_{k-2,u}\otimes R}$ is a free $R$-module, the short exact sequence splits. Similarly, from the short exact sequence in (1) of Theorem~\ref{thm:SchollDecomp2} and the fact that $S_k(X,R)$ is free, we have that $\mathbb{H}^1(X,\Omega^\bullet\otimes R)$ is free of rank $2\dim S_k(X,R)$, and therefore, $\mathbb{H}^1(X,\Omega_{\mathfrak{u}}^\bullet\otimes R)$ is a free $R$-module. Moreover, the rank of $\bigoplus\limits_{u\in\mathfrak{u}}\mathcal{E}_{k-2,u}\otimes R$ is clearly $(k-1)\cdot\#\mathfrak{u}$ and this gives the desired statement on the rank.
\end{proof}

If $R$ is a field of characteristic $0,$ we describe $\mathbb{H}^1(X,\Omega^\bullet_{\mathfrak{u}})$ in terms of meromorphic modular forms.

\begin{lemma}\label{lem:hypercohomology-mdular-comparison}
Assume $R$ is a field of characteristic $0$ and let $\mathfrak{u}\subset Y(R)$ be a non-empty finite subset of closed points. Denote by $U$ the open affine subset $X\setminus\mathfrak{u}$ and let $j:U\hookrightarrow X$ be the inclusion. The inclusion map $\Omega_{\mathfrak{u}}^\bullet\hookrightarrow j_\ast(\Omega^\bullet|_U)$ induces an isomorphism
$$
\mathbb{H}^1(X,\Omega^\bullet_{\mathfrak{u}}\otimes R)\cong\frac{S_k(X,\star \mathfrak{u},R)}{\partial^{k-1}M_{2-k}(X,\star \mathfrak{u}, R)}.
$$
\end{lemma}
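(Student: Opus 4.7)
The plan is to exploit the affineness of $U := X \setminus \mathfrak{u}$ (which holds since $X$ is a smooth projective curve and $\mathfrak{u}$ is non-empty) in order to convert the hypercohomology into a concrete cokernel. First I would argue that the inclusion of complexes $\Omega^\bullet_{\mathfrak{u}} \hookrightarrow j_\ast(\Omega^\bullet|_U)$ is a quasi-isomorphism. This can be verified stalk-by-stalk at each $u \in \mathfrak{u}$: because $\mathfrak{u} \subset Y$, the Gauss--Manin connection on $\mathcal{E}_{k-2}$ is regular (in fact, holomorphic) at $u$, so the standard pole-reduction identity $\nabla_{k-2}(t^{-n}v) = -n\,t^{-n-1}v\,dt + t^{-n}\nabla_{k-2}(v)$ shows inductively that every meromorphic representative is $\nabla_{k-2}$-cohomologous to one with at worst a logarithmic pole at $u$, and that an exact log-form is exact via a holomorphic primitive. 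Combined with the vanishing $H^i(U,\mathcal{F})=0$ for $i\geq 1$ (which holds on the affine $U$ for all quasicoherent $\mathcal{F}$), this gives
\[
\mathbb{H}^1(X,\Omega^\bullet_{\mathfrak{u}}\otimes R) \;\cong\; \operatorname{coker}\!\Bigl(\nabla_{k-2}\colon H^0(U,\mathcal{E}_{k-2})\otimes R \longrightarrow H^0\bigl(U,\mathcal{E}_{k-2}\otimes\Omega^1_X\bigr)\otimes R\Bigr).
\]

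Next I would identify this cokernel with $S_k(X,\star\mathfrak{u},R)/\partial^{k-1}M_{2-k}(X,\star\mathfrak{u},R)$ by defining a natural map out of $S_k(X,\star\mathfrak{u},R)$. Using the Hodge inclusion $\boldsymbol{\omega}^{\otimes(k-2)}\hookrightarrow \mathcal{E}_{k-2}$ together with the Kodaira--Spencer isomorphism $\boldsymbol{\omega}^{\otimes 2}\cong \Omega^1_X(\log Z)$, one has $S_k(X,\star\mathfrak{u},R)=H^0(X,\boldsymbol{\omega}^{\otimes(k-2)}\otimes\Omega^1_X(\star\mathfrak{u}))$ mapping into $H^0(U,\mathcal{E}_{k-2}\otimes\Omega^1_X)\otimes R$. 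Surjectivity of the induced map to the cokernel is the statement that every class has a ``Hodge-top'' representative: the Hodge filtration $0 = F^{k-1}\subset F^{k-2}=\boldsymbol{\omega}^{\otimes(k-2)}\subset\cdots\subset F^0=\mathcal{E}_{k-2}$ with graded pieces $F^i/F^{i+1}\cong \boldsymbol{\omega}^{\otimes(2i-k+2)}$ is shifted strictly by $\nabla_{k-2}$, and the induced map on graded pieces is the Kodaira--Spencer isomorphism. By descending induction on the filtration index, any section of $\mathcal{E}_{k-2}\otimes\Omega^1_X|_U$ is congruent modulo $\nabla_{k-2}$ to one lying in $F^{k-2}\otimes\Omega^1_X|_U$, i.e.\ to an element of $S_k(X,\star\mathfrak{u},R)$.

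For the kernel computation, suppose $f\in S_k(X,\star\mathfrak{u},R)$ equals $\nabla_{k-2}g$ for some $g\in H^0(U,\mathcal{E}_{k-2})\otimes R$. Because $f$ lies in the top filtration piece and $\nabla_{k-2}$ strictly lowers the filtration, one can modify $g$ by elements of the form $\nabla_{k-2}g'$ with $g'\in F^1$ so that after this reduction only the projection of $g$ to the bottom graded quotient $F^0/F^1\cong \boldsymbol{\omega}^{\otimes(2-k)}$ contributes; that projection produces an element $h\in M_{2-k}(X,\star\mathfrak{u},R)$. Bol's identity then identifies $\nabla_{k-2}g$ with $\partial^{k-1}h$ inside the cokernel, which can be checked by comparing $t$-expansions at $\underline{\infty}$ via part (4) of Theorem~\ref{thm:SchollDecomp2} (or more directly by a local coordinate computation using the Jordan form of $\nabla_{k-2}$ at a cusp). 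This shows that the kernel is exactly $\partial^{k-1}M_{2-k}(X,\star\mathfrak{u},R)$, yielding the claimed isomorphism.

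The main obstacle will be the kernel computation in the third paragraph: tracking how $\nabla_{k-2}$ interacts with the Hodge filtration and verifying that a $\nabla_{k-2}$-primitive of a top-filtration form can be chosen (modulo exact terms) in the bottom filtration piece requires a careful induction whose identification with Bol's operator depends on the explicit form of the Kodaira--Spencer symbol along the filtration. The compatibility with $q$-expansions is crucial here, as this is what translates the abstract de Rham statement into the concrete differential operator $\partial^{k-1} = (q\,d/dq)^{k-1}$ on Fourier coefficients; fortunately, the bulk of this translation is already furnished by Scholl's $t$-expansion comparison theorems recalled in Section~\ref{sec:de-Rham-weakly-holomorphic}.
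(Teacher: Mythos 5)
Your proposal is correct and follows essentially the same route as the paper: one first passes from the log complex $\Omega^\bullet_{\mathfrak{u}}$ to the meromorphic complex on the affine open $U$ (the paper does this via Proposition~\ref{prop:quasi} and Corollary~\ref{cor:jfp1}, where the relevant factors are invertible in characteristic $0$), and then uses affineness of $U$ together with the Hodge filtration, Kodaira--Spencer, and the Bol-type identification of the resulting map with $\partial^{k-1}$, which the paper imports directly from the proof of Theorem 2.7(i) of \cite{Scholl2} rather than re-deriving it by your filtration induction. One cosmetic correction: the degree-one term of the complex over $U$ is $\nabla_{k-2}(\mathcal{E}_{k-2})+\mathcal{E}_{k-2}\otimes\Omega^1_X$ (logarithmic poles at the cusps are allowed), not $\mathcal{E}_{k-2}\otimes\Omega^1_X$, so your displayed cokernel should be taken with respect to that term.
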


\begin{proof}
    If $R$ is a field of characteristic $0,$ we have that (see Corollary~\ref{cor:jfp1} which is a modification of Corollary~2.2.6 of \cite{kedlaya})
    $$
    \mathbb{H}^1(X,\Omega^\bullet_{\mathfrak{u}}\otimes R)\cong \mathbb{H}^1(X,j_\ast(\Omega_{\mathfrak{u}}^\bullet|_U)\otimes R)=\mathbb{H}^1(U,\Omega_{\mathfrak{u}}^\bullet|_U\otimes R).
    $$
    The spectral sequence for hypercohomology filtered by the $(k-2)$-th symmetric power of the Hodge filtration gives (see proof of part (i) of Theorem 2.7 in \cite{Scholl2}) an exact sequence
    $$
    \begin{tikzcd}
             & 0 \arrow[r]                                           & {H^0(U,\boldsymbol{\omega}^{2-k}\otimes R)} \arrow[r] & {H^0(U,\boldsymbol{\omega}^{k-2}\otimes\Omega^1_U\otimes R)} \arrow[r] & {\mathbb{H}^1(U,\Omega_{\mathfrak{u}}^\bullet|_U\otimes R)} \\
& \arrow[r] & {H^1(U,\boldsymbol{\omega}^{2-k}\otimes R)} \arrow[r] & {H^1(U,\boldsymbol{\omega}^{k-2}\otimes\Omega^1_U\otimes R)} \arrow[r] & 0                                                                  &                                                            
    \end{tikzcd}
    $$
    Since $U$ is affine and both $\boldsymbol{\omega}^{2-k}$ and $\boldsymbol{\omega}^{k-2}\otimes\Omega^1_U$ are coherent (in fact, locally free) sheaves of $\O_U$-modules, they are acyclic (see Section 44, Corollary 1 of \cite{serreFaisceaux}) and $H^1(U,\boldsymbol{\omega}^{k-2}\otimes\Omega^1_U\otimes R)=H^1(U,\boldsymbol{\omega}^{2-k}\otimes R)=0.$ Moreover, the proof of (i) of Theorem 2.7 gives that the map $H^0(U,\boldsymbol{\omega}^{2-k}\otimes R)\to H^0(U,\boldsymbol{\omega}^{k-2}\otimes\Omega_U^1\otimes R)$ is the map $\partial^{k-1},$ and we have our result.
\end{proof}

\begin{remark}
If $R=\mathbb{C},$ a theorem of Deligne (see Corollary 3.15 of \cite{DelDE}) shows that 
    $$
    \mathbb{H}^1(X,\Omega^\bullet_{\mathfrak{u}}\otimes \C)\cong \mathbb{H}^1(X,j_\ast(\Omega_{\mathfrak{u}}^\bullet|_U)\otimes \C)=\mathbb{H}^1(U,\Omega_{\mathfrak{u}}^\bullet|_U\otimes \C).
    $$
\end{remark}

In order to work in the $p$-adic setting, note that if $L/\Q_p$ is an unramified extension of degree $r$ with ring of integers $\mathfrak{o}_L$ and Frobenius morphism $\sigma,$ the $F$-crystal structure on $(\mathcal{E}_{k-2}^\infty,\nabla_{k-2}^\infty)$ induces a $\sigma$-linear Frobenius morphism $F$ on $\mathbb{H}^1(X,\Omega^\bullet_{\mathfrak{u}}\otimes\mathfrak{o}_L).$ Moreover, we assume that the reductions of $u\in\mathfrak{u}$ in $\F_{p^r}$ are distinct and $p\nmid M\cdot (k-2)!$. We determine the characteristic polynomial of the linear map $F^r.$

\begin{lemma}\label{lem:charFLog}
    If $G'$ is a closed subgroup scheme of $G$ that preserves $\mathfrak{u},$  then we have that\footnote{By $R_{k-2,\mathfrak{u}}$ and $F|R_{k-2,\mathfrak{u}}$ we mean the direct sum of stalks $\Sym^{k-2}\mathcal{E}_{u}^{\infty}$ as in Subsection~\ref{subsec:pAdicTheory} and the corresponding Frobenius on this $\mathfrak{o}_L$-module.}
    $$
    \det(T-F^r|\mathbb{H}^1(X,\Omega^\bullet_{\mathfrak{u}}\otimes\mathfrak{o}_L)^{G'})=\det(T-F^r|\mathbb{H}^1(X,\Omega^\bullet\otimes\mathfrak{o}_L)^{G'})\cdot\det(T-p^r\cdot F^r|R_{k-2,\mathfrak{u}}^{G'}\otimes\mathfrak{o}_L).
    $$
\end{lemma}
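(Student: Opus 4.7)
The plan is to obtain the factorization by applying multiplicativity of characteristic polynomials across the short exact sequence of hypercohomology groups
$$
0\to\mathbb{H}^1(X,\Omega^\bullet\otimes\mathfrak{o}_L)\to\mathbb{H}^1(X,\Omega_{\mathfrak{u}}^\bullet\otimes\mathfrak{o}_L)\to R_{k-2,\mathfrak{u}}\otimes\mathfrak{o}_L\to 0
$$
constructed in the proof of Lemma~\ref{lem:hypercohomology-free}, after making it $G'$-equivariant and $F^r$-equivariant (with the correct twist on the quotient). Since $G'$ preserves $\mathfrak{u}$ by hypothesis, the sequence is automatically $G'$-equivariant, and since all three terms are free $\mathfrak{o}_L$-modules of finite rank, passing to $G'$-invariants preserves exactness and produces a short exact sequence of free $\mathfrak{o}_L$-modules.

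The essential new point is to extend Scholl's $F$-crystal structure with logarithmic singularities along $Z$ (Subsection~\ref{subsec:pAdicTheory}) to one with logarithmic singularities along $Z\cup\mathfrak{u}$, thereby defining a Frobenius $F$ on $\mathbb{H}^1(X,\Omega_{\mathfrak{u}}^\bullet\otimes\mathfrak{o}_L)$ which restricts to Scholl's $F$ on the subspace $\mathbb{H}^1(X,\Omega^\bullet\otimes\mathfrak{o}_L)$. I then compute the induced Frobenius on the residue quotient locally: fixing $u\in\mathfrak{u}$, a local parameter $t$ at $u$, and a local lift $\phi$ of absolute Frobenius with $\phi(t)\equiv t^p\pmod p$, for a log-residue class represented by $\omega\otimes dt/t$ one has
$$
\phi^\ast\!\left(\omega\otimes\frac{dt}{t}\right)=\phi^\ast\omega\otimes\frac{d\phi(t)}{\phi(t)}\equiv p\cdot\phi^\ast\omega\otimes\frac{dt}{t}\pmod{\mathcal{E}_{k-2}\otimes\Omega_X^1}.
$$
Hence the induced $F$ on $R_{k-2,\mathfrak{u}}\otimes\mathfrak{o}_L$ equals $p$ times the fiberwise $F$-crystal Frobenius on each stalk $\mathcal{E}_{k-2,u}^\infty$, and iterating $r$ times (using $\sigma(p)=p$) yields that $F^r$ on the quotient equals $p^r\cdot F_{\mathrm{fib}}^r$, where $F_{\mathrm{fib}}$ denotes the bare fiberwise Frobenius used on the right-hand side of the lemma. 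The assumption that the reductions of $u\in\mathfrak{u}$ are distinct in $\F_{p^r}$ ensures $F^r$ stabilizes each stalk summand, so this quotient Frobenius decomposes as a direct sum over $\mathfrak{u}$.

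Given these compatibilities, the $G'$-invariant short exact sequence is $F^r$-equivariant between free finite-rank $\mathfrak{o}_L$-modules, and multiplicativity of characteristic polynomials of endomorphisms across such a sequence gives the claimed factorization.

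The main obstacle is the construction of the logarithmic $F$-crystal extension of $(\mathcal{E}_{k-2}^\infty,\nabla_{k-2}^\infty)$ over $Z\cup\mathfrak{u}$, together with the verification that the local residue computation above is canonical (independent of the choices of $t$ and $\phi$) and $G'$-equivariant. This closely parallels Scholl's construction at the cusps via the Tate curve; the situation here is in some ways simpler because $\pi:E^{\mathrm{univ}}\to X$ is smooth above $\mathfrak{u}\subset Y$, so genuine crystalline structures are available, but one still needs to check that the $p$-twist on residues is intrinsic and glues coherently across the $G'$-orbits on $\mathfrak{u}$.
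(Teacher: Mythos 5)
Your proposal is correct and follows essentially the same route as the paper: the short exact sequence from the proof of Lemma~\ref{lem:hypercohomology-free}, the observation that Frobenius acts on the residue quotient as $p$ times the fiberwise Frobenius because $\phi^\ast\left(\frac{dt}{t}\right)=p\frac{dt}{t}$ modulo regular forms, $G'$-compatibility from the hypothesis that $G'$ preserves $\mathfrak{u}$, and multiplicativity of characteristic polynomials. The technical points you flag as the main obstacle (canonicity of the twist and equivariance) are exactly what the paper disposes of by citing 5.6 and 7.2 of \cite{Scholl1}, the Frobenius on $\mathbb{H}^1(X,\Omega^\bullet_{\mathfrak{u}}\otimes\mathfrak{o}_L)$ having already been induced from Scholl's $F$-crystal structure since $\mathfrak{u}\subset Y$.
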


\begin{proof}
    By the proof of Lemma~\ref{lem:hypercohomology-free}, we have a short exact sequence 
    \begin{equation}\label{eq:sesAlgLogFrob}
    \begin{tikzcd}
    0 \arrow[r] & {\mathbb{H}^1(X,\Omega^\bullet\otimes \mathfrak{o}_L)} \arrow[r] & {\mathbb{H}^1(X,\Omega_{\mathfrak{u}}^\bullet\otimes \mathfrak{o}_L)} \arrow[r] & {\bigoplus\limits_{u\in\mathfrak{u}}\mathcal{E}_{k-2,u}\otimes \mathfrak{o}_L(-1)} \arrow[r] & 0,
    \end{tikzcd}
    \end{equation}
    where $F$ acts on $\bigoplus\limits_{u\in\mathfrak{u}}\mathcal{E}_{k-2,u}\otimes \mathfrak{o}_L(-1)$ as $pF$ on $\bigoplus\limits_{u\in\mathfrak{u}}\mathcal{E}_{k-2,u}\otimes \mathfrak{o}_L.$ This follows from the construction of the action of $F$ on the cohomology and from the fact that if $V$ is a neighborhood of $u,$ $\phi$ is an admissible lift of the absolute Frobenius map on $V\otimes\F_{p^r}$ to $V$ and $t$ is a local uniformizer at $u,$ then $\phi^\ast(\frac{dt}{t})=p\frac{dt}{t}$ (see 5.6 of \cite{Scholl1}).  This short exact sequence is compatible with the action of $G'$ since $G'$ stabilizes $\mathfrak{u}$ (see 7.2 of \cite{Scholl1}) and our claim follows.
\end{proof}

Moreover, in the notation of Proposition~\ref{prop:charF}, we describe $\det(T-F^r|\H^1(X,\Omega^\bullet_{\mathfrak{u}})^{G'})$ as the trace of Frobenius acting on the cohomology of an $\ell$-adic representation.

\begin{lemma}\label{lem:ladicChar}
    If $G'$ is a closed subgroup scheme of $G$ that preserves $\mathfrak{u},$ then we have
    $$
    \det(T-F^r|\H^1(X,\Omega^\bullet_{\mathfrak{u}}\otimes\mathfrak{o}_L)^{G'})=\det(T-F_p^r|\mathcal{W}_{k-2}(\overline{\F_p})^{G'})\cdot\det(T-F_p^r|\iota_\ast\iota^\ast\mathcal{G}_{k-2}(\overline{\F_p})(-1)),
    $$
    where $\mathcal{G}_{k-2}$ is the sheaf $h^\ast j_\ast \mathcal{F}_{k-2}$ on $X\left[\frac{1}{\ell}\right]$ in the notation of Subsection~\ref{subsec:alg-theory} and where $\iota:\mathfrak{u}\to X$ is the natural embedding.
\end{lemma}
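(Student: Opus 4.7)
The strategy is to combine Lemma~\ref{lem:charFLog} with Proposition~\ref{prop:charF}, reducing the remaining work to a stalk-by-stalk comparison between the $p$-adic and $\ell$-adic Frobenius actions on the fibers of $E^{\univ}$ above $\mathfrak{u}$. The factorization provided by the short exact sequence \eqref{eq:sesAlgLogFrob} already separates the ``weakly holomorphic'' contribution from the ``residue'' contribution, so I plan to identify each with the corresponding factor on the right-hand side.

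First, I apply Lemma~\ref{lem:charFLog} to factor the left-hand side as
\[
\det(T-F^r\mid\mathbb{H}^1(X,\Omega^\bullet\otimes\mathfrak{o}_L)^{G'})\cdot\det(T-p^r F^r\mid R_{k-2,\mathfrak{u}}^{G'}\otimes\mathfrak{o}_L).
\]
The first factor is handled directly by Proposition~\ref{prop:charF}, applied to $F^r$ and $F_p^r$, which identifies it with $\det(T-F_p^r\mid\mathcal{W}_{k-2}(\overline{\F_p})^{G'})$. This takes care of the weakly holomorphic piece verbatim from Scholl's comparison theorem.

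For the residue factor, fix $u\in\mathfrak{u}$. Since $u\in Y(K)$ and $p$ is good for $(M,k,K,\mathfrak{u})$, the fiber $E^{\univ}_{\kappa(u)}$ is an elliptic curve with good reduction at the prime below $u$. As recalled in Subsection~\ref{subsec:pAdicTheory}, the stalk $\mathcal{E}_u^\infty$ is the $p$-adic de Rham (crystalline) cohomology of this fiber, and the Frobenius on it has characteristic polynomial $T^2-a_{\kappa(u)}(p)T+p$. On the other hand, the stalk of $\iota^\ast\mathcal{G}_{k-2}$ at $u$ is canonically $\Sym^{k-2}H^1(E^{\univ}_{\kappa(u)}\otimes\overline{\F_p},\Q_\ell)$, on which the geometric Frobenius $F_p$ has the same characteristic polynomial $T^2-a_{\kappa(u)}(p)T+p$. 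Compatibility of $\Sym^{k-2}$ with both realizations, together with summing over $u\in\mathfrak{u}$ and taking $G'$-invariants (using that $G'$ permutes $\mathfrak{u}$ and that reductions of points in $\mathfrak{u}$ modulo $p$ are distinct by Definition~\ref{def:goodPrime}, so there is no mixing of stalks), gives
\[
\det(T-F^r\mid R_{k-2,\mathfrak{u}}^{G'}\otimes\mathfrak{o}_L)=\det(T-F_p^r\mid(\iota_\ast\iota^\ast\mathcal{G}_{k-2}(\overline{\F_p}))^{G'}).
\]

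Finally, the Tate twist $(-1)$ on the right-hand side multiplies each eigenvalue of $F_p^r$ by $p^r$, which matches exactly the extra $p^r$ appearing in $p^r F^r$ on the left side of the factorization coming from Lemma~\ref{lem:charFLog}. Combining these identifications yields the claimed identity. The main obstacle is the classical pointwise comparison of the crystalline and $\ell$-adic realizations of $H^1(E^{\univ}_{\kappa(u)})$; this is standard for elliptic curves of good reduction, but the step requiring care is verifying that the $G'$-equivariant structures on both sides align under this comparison, which follows from functoriality of both constructions with respect to endomorphisms of the fiber coming from the $G$-action.
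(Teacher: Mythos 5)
Your proposal is correct and follows essentially the same route as the paper, whose entire proof is the one-line observation that the lemma follows immediately from Proposition~\ref{prop:charF} and Lemma~\ref{lem:charFLog}. Your stalkwise comparison of the crystalline and $\ell$-adic Frobenius characteristic polynomials on the fibers over $\mathfrak{u}$ (via $T^2-a_{\kappa(u)}(p)T+p$, compatibility with $\Sym^{k-2}$, and the Tate twist matching the $p^r$ from Lemma~\ref{lem:charFLog}) is exactly the content the paper treats as immediate, having already recalled it in Subsection~\ref{subsec:pAdicTheory}.
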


\begin{proof}
    This follows immediately from Proposition~\ref{prop:charF} and Lemma~\ref{lem:charFLog}.
\end{proof}

We are now ready to prove a weak version of Theorem~\ref{thm:main-ASD}. 

\begin{prop}\label{prop:main-ASD-weak}
    Let $K$ be an algebraic number field, $k\geq 3$ and $\mathfrak{u}\subset Y(K).$ If  $d:=2\dim S_k(X)+(k-1)\cdot\#\mathfrak{u}$ and $p$ is a good prime for $(M,k,K,\mathfrak{u})$ with $[K_p:\Q_p]=r,$ then there exists a polynomial 
    $$
    P_{k,p^r,\mathfrak{u}}(T)=\sum\limits_{i=0}^{d} A_{i,p^r,\mathfrak{u}} T^i \in \Z[T]
    $$ 
    such that for any $f=\sum\limits_{n\geq 1} a_f(n)t^n\in S_k(X_\Gamma,\star\mathfrak{u}, \mathfrak{o}_K\left[\frac{1}{M},u,\frac{1}{u}\right])$ and any $m\geq 1, s\geq 0,$ we have
    \begin{equation}
    \sum\limits_{i=0}^d p^{r(k-1)i}\cdot A_{i,p^r,\mathfrak{u}}\cdot\gamma_p^{mp^s(p^{ri}-1)/(p-1)}\cdot a_{f}\left(\frac{mp^s}{p^{ri}}\right) \equiv 0\pmod{p^{(k-1)s+j_{f,p}}}  
    \end{equation}
    for some $j_{f,p}\in\Z.$
\end{prop}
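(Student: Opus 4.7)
The plan is to apply the Cayley--Hamilton theorem to the $\mathfrak{o}_{K_p}$-linear operator $F^r$ on the integral cohomology $\mathbb{H}^1(X,\Omega^\bullet_{\mathfrak{u}}\otimes\mathfrak{o}_{K_p})$ and then descend the resulting identity to the $t$-expansion by iterating Scholl's explicit formula for $F$.

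Since $p$ is a good prime, $\mathfrak{o}_{K_p}$ is flat over $\mathbb{Z}[\tfrac{1}{M(k-2)!}]$, so Lemma~\ref{lem:hypercohomology-free} gives that $\mathbb{H}^1(X,\Omega^\bullet_{\mathfrak{u}}\otimes\mathfrak{o}_{K_p})$ is a free $\mathfrak{o}_{K_p}$-module of rank $d=2\dim S_k(X)+(k-1)\cdot\#\mathfrak{u}$. After base-change to $K_p$, Lemma~\ref{lem:hypercohomology-mdular-comparison} identifies the resulting space with $S_k(X,\star\mathfrak{u},K_p)/\partial^{k-1}M_{2-k}(X,\star\mathfrak{u},K_p)$, and the integral cohomology sits inside it as a full $\mathfrak{o}_{K_p}$-lattice. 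The $F$-crystal structure extends to the logarithmic complex (as implicit in the proof of Lemma~\ref{lem:charFLog}), producing a $\sigma$-linear endomorphism $F$ on this lattice whose $r$-th iterate is $\mathfrak{o}_{K_p}$-linear since $\sigma^r=\id$. I would then set
$$P_{k,p^r,\mathfrak{u}}(T):=\det\bigl(T-F^r\mid \mathbb{H}^1(X,\Omega^\bullet_{\mathfrak{u}}\otimes\mathfrak{o}_{K_p})\bigr),$$
which has coefficients in $\mathbb{Z}$ by Lemma~\ref{lem:ladicChar}.

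Cayley--Hamilton yields $P_{k,p^r,\mathfrak{u}}(F^r)=0$ on the lattice. For any $f$ as in the statement, the class $[f]\in\mathbb{H}^1(X,\Omega^\bullet_{\mathfrak{u}}\otimes K_p)$ lies in $p^{-j_{f,p}}$ times the integral lattice for some integer $j_{f,p}$, so applying Cayley--Hamilton to $p^{-j_{f,p}}[f]$ and then the $t$-expansion map $\rho$ gives
$$\sum_{i=0}^d A_{i,p^r,\mathfrak{u}}\, \rho(F^{ri}(f))\,\in\, p^{j_{f,p}}\cdot (q\tfrac{d}{dq})^{k-1}\mathfrak{o}_{K_p}[[t]].$$
Iterating Scholl's $\sigma$-linear formula $\rho(F(h))\equiv p^{k-1}\sum_n\sigma(a_h(n))\gamma_p^n t^{np}$ through $ri$ applications, one computes that the $t^{mp^s}$-coefficient of $\rho(F^{ri}(f))$ takes the form $p^{ri(k-1)}\,\gamma_p^{mp^s(p^{ri}-1)/(p-1)}\,a_f(mp^s/p^{ri})$ (with the convention that $a_f(x)=0$ when $x$ is not a positive integer). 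Since the $t^{mp^s}$-coefficient of any element of $(q\tfrac{d}{dq})^{k-1}\mathfrak{o}_{K_p}[[t]]$ is divisible by $p^{(k-1)s}$, extracting this coefficient produces the stated congruence modulo $p^{(k-1)s+j_{f,p}}$.

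The principal obstacle is the careful iteration of the $\sigma$-linear Frobenius: a direct iteration of Scholl's formula produces a gamma-factor of the form $\prod_{j=0}^{ri-1}\sigma^{ri-1-j}(\gamma_p)^{p^j}$ rather than a clean power of $\gamma_p$, and one must verify, via the defining relation $\gamma_p^\nu=D^p/\sigma(D)$ combined with $\sigma^r=\id$ on $\mathfrak{o}_{K_p}$ and the way the formal uniformizer $t$ transforms under Frobenius, that this product agrees with the expression $\gamma_p^{mp^s(p^{ri}-1)/(p-1)}$ appearing in the statement (at least modulo the image of $\partial^{k-1}$, which is all that is required). Once that identification is in hand, the integrality of $P_{k,p^r,\mathfrak{u}}$ from Lemma~\ref{lem:ladicChar} and the valuation estimate on $(q\tfrac{d}{dq})^{k-1}$ combine to give the proposition.
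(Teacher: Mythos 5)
Your proposal is correct and takes essentially the same route as the paper's proof: identify $\mathbb{H}^1(X,\Omega^\bullet_{\mathfrak{u}})\otimes K_p$ with $S_k(X,\star\mathfrak{u},K_p)/\partial^{k-1}M_{2-k}(X,\star\mathfrak{u},K_p)$ via Lemma~\ref{lem:hypercohomology-mdular-comparison}, take $P_{k,p^r,\mathfrak{u}}$ to be the characteristic polynomial of the linear operator $F^r$ (integral by Lemma~\ref{lem:charFLog} and Proposition~\ref{prop:charF}, i.e.\ Lemma~\ref{lem:ladicChar}), and apply Cayley--Hamilton together with Scholl's formula for $F$ on $t$-expansions, absorbing all denominators into the unspecified $j_{f,p}$. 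The $\sigma$-linear bookkeeping you flag as the main obstacle is harmless at the level of this statement: since $D\in\Z[1/M]^\ast$ one has $\sigma(\gamma_p)=\gamma_p$, so the iterated gamma-factor telescopes to an honest power of $\gamma_p$ (a $p$-adic unit in $1+p\mathfrak{o}_{K_p}$), which is exactly the point the paper's own proof passes over silently.
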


\begin{proof}
Let $G'=1,$ and denote by $F\otimes\id$ the extension of $F$ to $\mathbb{H}^1(X,\Omega^\bullet_{\mathfrak{u}})\otimes K_p,$ where we fix an embedding $K\hookrightarrow K_p.$ By Lemma~\ref{lem:hypercohomology-mdular-comparison} and the compatibility of the constructions with base change, we have that
$$
\mathbb{H}^1(X,\Omega^\bullet_{\mathfrak{u}})\otimes K_p\cong\frac{S_k(X,\star \mathfrak{u},K_p)}{\partial^{k-1}M_{2-k}(X,\star \mathfrak{u}, K_p)}.
$$
The characteristic polynomial of the linear map $F^r\otimes\id$ on $\mathbb{H}^1(X,\Omega_{\mathfrak{u}}^\bullet)\otimes K_p$ lands in $\Z[T]$ by Lemma~\ref{lem:charFLog} and Proposition~\ref{prop:charF}. Moreover, we have that $F$ acts on the $t$-expansion by
$$
F\left(\sum\limits_{n\geq 1}a_f(n)t^n\right)\equiv p^{k-1}\sum\limits_{n\geq 1}\sigma(a_f(n))\gamma_p^{n}t^{pn}\mod \left(q\frac{d}{dq}\right)^{k-1}(\mathfrak{o}_{K_p}[[t]]\otimes K_p),
$$
where $\sigma$ is the Frobenius endomorphism of $K_p.$ If we denote the characteristic polynomial of $F^r$ by $P_{k,p^r\mathfrak{u}},$ then we have that
$$
P_{k,p^r,\mathfrak{u}}(F)\left(\sum\limits_{n\geq 1}a_f(n)t^n\right)\equiv 0\mod \left(q\frac{d}{dq}\right)^{k-1}(\mathfrak{o}_{K_p}[[t]]\otimes {K_p}),
$$
and this is precisely our statement.
\end{proof}
\begin{remark}\label{rem:BaseChangeMeaning}
    $\mathbb{H}^1(X,\Omega^\bullet_{\mathfrak{u}}\otimes K)\cong \mathbb{H}^1(X,\Omega^\bullet_{\mathfrak{u}}\otimes\mathfrak{o}_K\left[\frac{1}{M},u,\frac{1}{u}\right])\otimes K$ implies that for a meromorphic modular form $f$ with Fourier coefficients in $K,$ there exists $c\in \mathfrak{o}_K$ such that $cf$ has Fourier coefficients in $\mathfrak{o}_K\left[\frac{1}{M},u,\frac{1}{u}\right].$ 
\end{remark}

\begin{remark}\label{rem:computing-P-polynomial}
    {By Lemma~\ref{lem:ladicChar}, we can write the polynomial $P_{k,p^r,\mathfrak{u}}(T)$ as a product of polynomials 
    $$
    P_{k,p^r,\mathfrak{u}}(T)=\det(T-F_p^r|\mathcal{W}_{k-2}(\overline{\F_p})^{G'})\prod\limits_{u\in\mathfrak{u}}\det(T-F_p^r|\left((\iota_u)\ast(\iota_u)^\ast\mathcal{G}_{k-2}(\overline{\F_p})(-1)\right)^{G'}),
    $$
    where $\mathcal{G}_{k-2}$ is as in Lemma~\ref{lem:ladicChar} and $\iota_u$ is the embedding of $u$ into $X.$
    The first polynomial $\det(T-F_p^r|\mathcal{W}_{k-2}(\overline{\F_p})^{G'})$ is the same as that of \cite[\S4]{Scholl2} and has roots of modulus $p^{\frac{r(k-1)}{2}}$ under any embedding $\overline{\Q_\ell}\hookrightarrow\C.$  On the other hand, for $u\in\mathfrak{u}$ and $j(E^{\univ}_{\kappa(u)}/\F_p)\not\in\{0,1728\},$ we have that 
    $$
    \det(T-F_p^r|\left((\iota_u)\ast(\iota_u)^\ast\mathcal{G}_{k-2}(\overline{\F_p})(-1)\right)^{G'})=\prod_{j=0}^{k-2}(T- p^r\cdot \alpha^j (p^r/\alpha)^{k-2-j}),
    $$
    where 
    $$
    \alpha+\frac{p^r}{\alpha}=p^r+1-\#E^{\univ}_{\kappa(u)}(\F_{p^r}).
    $$
    Therefore, in contrast to $\det(T-F_p|\mathcal{W}_{k-2}(\overline{\F_p})^{G'})$, the roots have modulus $p^{\frac{rk}{2}}.$ When $j(E^{\univ}_{\kappa(u)}/\F_p)\in\{0,1728\},$ see Subsection~\ref{subsec:Bonsich} for an example.
    }
\end{remark}

\section{An integral computation}\label{sec:integral-shift-computation}

In order to derive Theorem~\ref{thm:main-ASD} from Proposition~\ref{prop:main-ASD-weak}, it remains to determine the shift $j_{f,p}.$ This shift arises from the identification in Lemma~\ref{lem:hypercohomology-mdular-comparison}, where one passes to the field $K$ (in addition, see Remark~\ref{rem:BaseChangeMeaning}). Here we provide an integral computation that allows us to track the ``division by $p$''. In what follows, we follow the computations in Section~2 of \cite{kedlaya}.

For $m\geq 0,$ denote by $\Omega_{\mathfrak{u}}^\bullet(m\mathfrak{u})$ the chain complexes obtained by tensoring each term with $\O_X(m\mathfrak{u}).$ In this notation, we have the following proposition.

\begin{prop}\label{prop:quasi}
    If $m\geq 0,$ then the cokernels of the maps on the $0$th and $1$st homology sheaves induced by the natural maps of complexes
    $$\Omega^\bullet_{\mathfrak{u}}\to\Omega_{\mathfrak{u}}^\bullet(m\mathfrak{u})
    $$
    are killed by $1$ and $m!$ respectively.
\end{prop}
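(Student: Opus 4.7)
The statement concerns maps of homology sheaves, so the claim is local on $X$; away from $\mathfrak{u}$ the two complexes coincide and the induced maps on homology sheaves are identities, so there is nothing to prove. I would therefore work in the formal neighborhood of a fixed $u\in\mathfrak{u}$, choose a local uniformizer $t$ and a local frame $e_1,\dots,e_{k-1}$ of $\mathcal{E}_{k-2}$. Since $u\in Y$ (not a cusp), the Gauss--Manin connection is regular at $u$, i.e.\ $\nabla_{k-2}(e_i)=\sum_j a_{ij}(t)\,dt\otimes e_j$ with $a_{ij}\in\mathcal{O}_{X,u}$, and in particular $\nabla_{k-2}(\mathcal{E}_{k-2})\subset \mathcal{E}_{k-2}\otimes\Omega^1_X$ near $u$. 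This lets me identify the second term of $\Omega^\bullet_{\mathfrak{u}}$ locally at $u$ with $\mathcal{E}_{k-2}\otimes\Omega^1_X(\log u)$, and the analogous identification holds after twisting by $\mathcal{O}(mu)$. For $\mathcal{H}^0=\ker\nabla_{k-2}$, I would show the inclusion $\ker\nabla_{k-2}|_{\mathcal{E}_{k-2}}\hookrightarrow \ker\nabla_{k-2}|_{\mathcal{E}_{k-2}(mu)}$ is an isomorphism: writing a killed section as $v=\sum_i c_i(t)e_i$ with $c_i$ allowed to have a pole of order at most $m$, the flatness equation $c_j'=-\sum_i a_{ij}(t)c_i(t)$ forces a pole-order contradiction unless every $c_i$ is regular (the right-hand side has pole order bounded by the maximum pole order of the $c_i$, while the left-hand side has pole order one larger at every index attaining that maximum). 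Thus the cokernel on $\mathcal{H}^0$ is zero, hence killed by $1$.

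For $\mathcal{H}^1=\coker\nabla_{k-2}$, I would perform an explicit pole-order reduction. A local section of the twisted complex takes the form $\omega=\sum_i b_i(t)\,t^{-m-1}\,dt\otimes e_i$ with $b_i\in\mathcal{O}_{X,u}$, whereas a class in $\mathcal{H}^1(\Omega^\bullet_{\mathfrak{u}})$ is represented by a form of pole order at most $1$ in the ``$dt$''-normalization. The key mechanism is the identity
\begin{equation*}
\nabla_{k-2}\!\bigl(-\tfrac{1}{j}\,b(t)\,t^{-j}\,e_i\bigr) \;=\; b(t)\,t^{-j-1}\,dt\otimes e_i \;+\; R_{j,i}(b),
\end{equation*}
valid for $j\ge 1$ and any $b\in\mathcal{O}_{X,u}$, where the remainder $R_{j,i}(b)$ comes from $db$ together with the regular connection matrix $(a_{i\ell})$ and therefore has pole order at most $j$ (and introduces no new denominators). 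Starting from $\omega$ of pole order $m+1$, I would successively cancel the leading components at pole orders $m+1,m,\dots,2$, which costs dividing by $m,m-1,\dots,1$ respectively, for a total denominator of exactly $m!$. After multiplying by $m!$ the class of $\omega$ becomes equivalent, modulo $\nabla_{k-2}(\mathcal{E}_{k-2}(mu))$, to a section of $\mathcal{E}_{k-2}\otimes\Omega^1_X(\log u)$, exhibiting $m!$ as an annihilator of the cokernel of the map on $\mathcal{H}^1$.

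The point that requires care is the bookkeeping in the $\mathcal{H}^1$ induction: I must verify that each step truly lowers the pole order (so the induction terminates after exactly $m$ steps) and that the sole denominators that appear come from $d(t^{-j})=-jt^{-j-1}dt$, cumulating to $m!$ and not a larger factorial. The regularity of $\nabla_{k-2}$ at $u\in Y$ is essential in both arguments: in the $\mathcal{H}^1$ step it ensures that $R_{j,i}$ is regular of strictly smaller pole order, and in the $\mathcal{H}^0$ step it underlies the order-of-pole comparison in the flatness equation.
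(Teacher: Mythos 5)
Your proposal is correct and follows essentially the same route as the paper: reduce to a stalkwise computation at $u$, use that the Gauss--Manin connection is regular at non-cuspidal points (so the degree-one term of the complex is locally $\mathcal{E}_{k-2}\otimes\Omega^1_X(\log u)$), check directly that flat sections with poles are regular for $\mathcal{H}^0$, and run the same pole-order-reduction induction for $\mathcal{H}^1$, with the denominators $m,m-1,\dots,1$ accumulating to exactly $m!$. The only cosmetic difference is that the paper clears each denominator immediately (multiplying by $m$ before correcting by $\nabla_{k-2}\bigl(-\sum_i A_i\,\omega^{k-2-i}\xi^i\,T^{-m}\bigr)$ and then invoking the inductive hypothesis), whereas you track the factors $1/j$ and clear them at the end; your remainder $R_{j,i}(b)$ does carry the factor $1/j$, but your final accounting of $m!$ is the same as the paper's.
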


\begin{proof}

This can be verified stalkwise. Let $R=\mathfrak{o}_K\left[\frac{1}{M}\right].$ Without loss of generality (see Remark 2.2.3 and the proof of Theorem 2.2.5 of \cite{kedlaya}), we can replace $X$ by $\Spec R[T]$ with $u\in\mathfrak{u}$ replaced with the point $T=0.$ Let $\omega$ and $\xi$ be generators of $\mathcal{E}_u,$ and locally, write the Gauss-Manin connection as
$$
\nabla=d+\gamma\otimes dT,
$$
where $\gamma\in M_{2\times 2}(R[T]).$

For the $0$th homology sheaf, the claim is the following. If $s$ is a local section of $\mathcal{E}_{k-2}\otimes_{R[T]}\frac{1}{T^m}R[T]$ such that $\nabla_{k-2}(s)=0,$ then $s$ is a local section of $\mathcal{E}_{k-2}.$ This is obvious.

For the $1$st homology sheaf, the claim in the lemma is the following: Let $s_1$ and $s_2$ be local sections of $\mathcal{E}_{k-2}\otimes_{R[T]}\frac{1}{T^m}R[T]$ and $P_1(T),P_2(T)\in R[T].$ If we have a local section
$$
g=\frac{P_1(T)}{T^m}\cdot\nabla_{k-2}(s_1)+s_2\cdot\frac{P_2(T)}{T^{m+1}}dT,
$$
then there exists a local section $t_1$ of $\mathcal{E}_{k-2}\otimes_{R[T]}\frac{1}{T^m}R[T],$ a local section $t_2$ of $\mathcal{E}_{k-2},$ and $P_3(T)\in R[T]$ such that
$$
(m!)\cdot g=\nabla_{k-2}(t_1)+t_2\cdot\frac{P_3(T)dT}{T}.
$$
We prove this by induction. If $m=0,$ there is nothing to prove. Suppose the statement is true for $m-1\geq 0,$ and consider $g$ as above. We then have
$$
g=\nabla_{k-2}(s_1)\cdot\frac{P_1(T)}{T^m}+\sum\limits_{i=0}^{k-2}\omega^{k-2-i}\xi^i\cdot Q_{i}(T)\frac{dT}{T^{m+1}},
$$
where $Q_i(T)\in R[T].$ For $0\leq i\leq k-2,$ write $Q_i=A_i+TB_i(T), A_i\in R, B_i\in R[T].$ By the Leibniz rule for $\nabla_{k-2},$ we have that
$$
\nabla_{k-2}\left(-\sum\limits_{i=0}^{k-2}\omega^{k-2-i}\xi^i T^{-m}\right)=m\sum\limits_{i=0}^{k-2}\omega^{k-2-i}\xi^i\frac{dT}{T^{m+1}}+t\otimes\frac{dT}{T^m},
$$
where $t$ is a local section of $\mathcal{E}_{k-2}\otimes\frac{1}{T^m}dT.$ This implies that
$$
mg=\nabla_{k-2}\left(-\sum\limits_{i=0}^{k-2}A_i\omega^{k-2-i}\xi^i T^{-m}\right)+\nabla_{k-2}(ms_1)\cdot\frac{P_1(T)}{T^m}+s_3\cdot\frac{P_3(T)}{T^m}dT,
$$
where $s_3$ is a local section of $\mathcal{E}_{k-2}$ and $P_3(T)\in R[T].$ Now, the induction hypothesis applies to $h=\nabla_{k-2}(ms_1)\cdot\frac{P_1(T)}{T^m}+s_3\cdot\frac{P_3(T)}{T^m}dT,$ that is, $(m-1)!h$ can be written in the required form. This implies that $m!g$ can be written in the required form.
\end{proof}

As a corollary, we obtain some control on the shift $j_{f,p}$ in Proposition~\ref{prop:main-ASD-weak}.

\begin{cor}\label{cor:jfp1}
    If $K$ is a field of characteristic $0,$ then we have that $\mathbb{H}^1(X,\Omega^\bullet_{\mathfrak{u}})\otimes K\cong \mathbb{H}^1(U,\Omega^\bullet|_U)\otimes K.$ Moreover, if $f\in S_k(X,\star\mathfrak{u},\mathfrak{o}_K\left[\frac{1}{M},u\right]), $ then the $j_{f,p}$ in Proposition~\ref{prop:main-ASD-weak} satisfies the inequality 
    $$
    j_{f,p}\geq -\ord_p((-\ord_{\mathfrak{u}}(f)-1)!).
    $$
\end{cor}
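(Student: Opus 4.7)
The plan is to use Proposition~\ref{prop:quasi} in two different ways: qualitatively to collapse the factorials in characteristic zero and obtain the isomorphism, and then quantitatively at the integral level to pin down the $p$-adic shift.

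For the isomorphism, I would first observe that the proof of Proposition~\ref{prop:quasi} actually shows that the $\mathcal{H}^0$-map is an isomorphism (it is injective on the underlying sheaves, and the kernel argument in the proof shows that $\nabla_{k-2}(s)=0$ with $s\in\mathcal{E}_{k-2}\otimes\tfrac{1}{T^m}R[T]$ forces $s\in\mathcal{E}_{k-2}$). After tensoring with $K$, the factor $m!$ appearing in the $\mathcal{H}^1$-statement becomes invertible. Hence for every $m\geq 0$ the natural map $\Omega^\bullet_{\mathfrak u}\otimes K\to\Omega^\bullet_{\mathfrak u}(m\mathfrak u)\otimes K$ is a quasi-isomorphism. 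Passing to the direct limit, which is exact and commutes with cohomology sheaves, and using $\varinjlim_m\Omega^\bullet_{\mathfrak u}(m\mathfrak u)=j_\ast(\Omega^\bullet|_U)$, one obtains a quasi-isomorphism $\Omega^\bullet_{\mathfrak u}\otimes K\xrightarrow{\sim}j_\ast(\Omega^\bullet|_U)\otimes K$. Since $j:U\hookrightarrow X$ is an affine open immersion, $Rj_\ast=j_\ast$, and taking hypercohomology yields $\mathbb{H}^1(X,\Omega^\bullet_{\mathfrak u})\otimes K\cong\mathbb{H}^1(U,\Omega^\bullet|_U)\otimes K$.

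For the inequality, set $\ell=-\ord_{\mathfrak u}(f)$ and $m=\ell-1$. Since $\Omega^1_X(\log\mathfrak u)(m\mathfrak u)$ accommodates poles of order $m+1=\ell$ along $\mathfrak u$, the form $f$ represents an integral class $[f]\in\mathbb{H}^1(X,\Omega^\bullet_{\mathfrak u}(m\mathfrak u))$ over $\mathfrak{o}_K[\tfrac{1}{M},u]$. Let $C$ denote the cone of the inclusion $\Omega^\bullet_{\mathfrak u}\hookrightarrow\Omega^\bullet_{\mathfrak u}(m\mathfrak u)$. The long exact sequence of cohomology sheaves, together with the $\mathcal{H}^0$-isomorphism above and the $\mathcal{H}^1$-statement of Proposition~\ref{prop:quasi}, gives $\mathcal{H}^0(C)=0$ and $m!\cdot\mathcal{H}^1(C)=0$. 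Because $\mathcal{H}^1(C)$ is supported on the zero-dimensional set $\mathfrak u$, the hypercohomology spectral sequence for $C$ collapses to give $\mathbb{H}^1(X,C)=H^0(X,\mathcal{H}^1(C))$, which is annihilated by $m!$. The long exact hypercohomology sequence associated with $\Omega^\bullet_{\mathfrak u}\to\Omega^\bullet_{\mathfrak u}(m\mathfrak u)\to C$ then shows that $m!\cdot[f]$ lifts to an integral class in $\mathbb{H}^1(X,\Omega^\bullet_{\mathfrak u})$ over $\mathfrak{o}_K[\tfrac{1}{M},u]$.

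To conclude, I would apply Proposition~\ref{prop:main-ASD-weak} to this integral lift; its $t$-expansion is exactly $m!$ times that of $f$, so the resulting congruence holds with shift $j=0$. Dividing through by $m!$ recovers the congruence for $f$ itself with shift at least $-\ord_p(m!)=-\ord_p((-\ord_{\mathfrak u}(f)-1)!)$. The main technical obstacle is bridging the local sheaf-level statement of Proposition~\ref{prop:quasi} to a global hypercohomology statement: this is handled by the cone argument above, which works cleanly precisely because $C$ has only one nonzero cohomology sheaf, supported on a finite set of closed points, so the hypercohomology spectral sequence degenerates and one does not need to worry about higher direct images.
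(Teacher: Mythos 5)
Your argument is correct and follows essentially the paper's own route: the isomorphism is obtained exactly as in the paper (the maps $\Omega^\bullet_{\mathfrak u}\otimes K\to\Omega^\bullet_{\mathfrak u}(m\mathfrak u)\otimes K$ become quasi-isomorphisms once $m!$ is invertible, then one uses $\varinjlim_m\Omega^\bullet_{\mathfrak u}(m\mathfrak u)=j_\ast(\Omega^\bullet|_U)$ and the commutation of hypercohomology with direct limits), and your mapping-cone computation is only a repackaging of the paper's comparison of the $E_\infty$-pages of the two hypercohomology spectral sequences, arriving at the same key point that $m!$ kills the obstruction to lifting the integral class of $f$ from $\mathbb{H}^1(X,\Omega^\bullet_{\mathfrak u}(m\mathfrak u))$ to $\mathbb{H}^1(X,\Omega^\bullet_{\mathfrak u})$, after which applying Proposition~\ref{prop:main-ASD-weak} to the lift and dividing by $m!$ gives the stated bound on $j_{f,p}$. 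One small inaccuracy: $\mathcal{H}^0(C)=0$ does not follow from the cokernel statements of Proposition~\ref{prop:quasi} alone (it would require injectivity of $\mathcal{H}^1(\Omega^\bullet_{\mathfrak u})\to\mathcal{H}^1(\Omega^\bullet_{\mathfrak u}(m\mathfrak u))$, which is true but needs a separate one-line pole-order argument in characteristic $0$); fortunately you never actually need it, since every cohomology sheaf of the cone is a skyscraper supported on $\mathfrak u$, so $H^1(X,\mathcal{H}^0(C))=0$ and the conclusion $m!\cdot\mathbb{H}^1(X,C)=0$ holds regardless.
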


\begin{proof}
If $K$ is a field, then Proposition~\ref{prop:quasi} tells us that for all $m\geq 0,$  we have that $\Omega^\bullet_{\mathfrak{u}}\otimes K$ and $\Omega^\bullet_{\mathfrak{u}}(m\mathfrak{u})\otimes K$ are quasi-isomorphic as chain complexes. The statement then follows by noting that $\mathbb{H}^1(U,\Omega^\bullet|_U)\otimes K=\mathbb{H}^1(X,\varinjlim\limits_{m} \Omega^\bullet_{\mathfrak{u}}(m\mathfrak{u})\otimes K)$ and that the construction of hypercohomology over Noetherian spaces commutes with direct limits (see III.2.9 of \cite{HartshorneAG}).

To compute $j_{f,p},$ if $m\geq 0$ and $q\in\{0,1\},$ then let $\mathcal{H}^q_m$ denote the homology sheaves of $\Omega^\bullet_{\mathfrak{u}}(m\mathfrak{u})$.  We have the Leray spectral sequence (for example, see Theorem 14.14 of \cite{BottTu})
$$
E_{2,m}^{p,q}=H^p(X,\mathcal{H}^q_m)\Rightarrow\mathbb{H}^{p+q}(X,\Omega^\bullet_{\mathfrak{u}}(m\mathfrak{u})).
$$
It is clear that $E_{2,m}^{p,q}=0$ if $q\not\in\{0,1\}$ and that $E_{\infty,m}=E_{3,m}.$ Proposition~\ref{prop:quasi} tells us that the natural maps $E_{\infty,0}^{1,0}\to E_{\infty,m}^{1,0}$ and $E_{\infty,0}^{0,1}\to E_{\infty,m}^{1,0}$ are killed by $1$ and $m!$ respectively. This implies that $m!$ kills the cokernel of the natural map $\mathbb{H}^1(X,\Omega^\bullet_{\mathfrak{u}})\to\mathbb{H}^1(X,\Omega_{\mathfrak{u}}^{\bullet}(m\mathfrak{u})).$ Therefore, if $m:=-\ord_{\mathfrak{u}}(f)-1,$ then $m!f$ lies in the image of $\mathbb{H}^1(X,\Omega^\bullet_{\mathfrak{u}})$ in $\mathbb{H}^1(U,\Omega^\bullet|_U)$ which gives our claim.

\end{proof}

\begin{remark}
    If the poles are located at cusps, the method of proof above gives the condition of weakly exact cusp form in Theorem~\ref{thm:KSChar}. The major difference is that $\nabla$ has logarithmic singularities at the cusps which alters the computation in Proposition~\ref{prop:quasi}.
\end{remark}

When we have $\dim S_k(X)=0,$ we require a stronger bound on $j_{f,p}$ in order to obtain the extra $p^{k-i}$ in Theorem~\ref{thm:main-ASD}. To this end, following Scholl, we consider a modified version of the chain complex $\Omega^\bullet_{\mathfrak{u}}.$ Namely, in the notation of Subsection~\ref{subsec:alg-theory}, define
$$
\Omega_{\mathfrak{u},p}^\bullet:=[\begin{tikzcd}
p\mathcal{E}_{k-2,p} \arrow[r, "\nabla_{k-2}"] & {\nabla_{k-2}\left(p\mathcal{E}_{k-2,p}\right)+\mathcal{E}_{k-2,p}\otimes\Omega_{X}^1(\log\mathfrak{u})}
\end{tikzcd}].
$$
We clearly have that the following diagram commutes
$$
\begin{tikzcd}[row sep=2ex, column sep=3ex]
0 \arrow[r] & p\mathcal{E}_{k-2,p} \arrow[r, "\id"] \arrow[d, "\nabla_{k-2}"]                       & p\mathcal{E}_{k-2,p} \arrow[r] \arrow[d, "\nabla_{k-2}"]                                                                 & 0 \arrow[r] \arrow[d]            & 0 \\
0 \arrow[r] & \nabla_{k-2}(p\mathcal{E}_{k-2,p})+\mathcal{E}_{k-2,p}\otimes\Omega_X^1 \arrow[r, hook] & \nabla_{k-2}(p\mathcal{E}_{k-2,p})+\mathcal{E}_{k-2,p}\otimes\Omega_X^1(\log\mathfrak{u}) \arrow[r, "\Res_{\mathfrak{u}}"] & {R_{k-2,\mathfrak{u},p}} \arrow[r] & 0,
\end{tikzcd}
$$
where $R_{k-2,\mathfrak{u},p}$ is the skyscraper sheaf supported on $\mathfrak{u}$ with the fiber at $u$ being the fiber of $\mathcal{E}_{k-2,p}.$
The proofs of Lemmas~\ref{lem:hypercohomology-free} and ~\ref{lem:hypercohomology-mdular-comparison}, Proposition~\ref{prop:quasi}, and Corollary~\ref{cor:jfp1} go through almost verbatim to give the following proposition.

\begin{prop}\label{prop:extraP}
    Let $K$ be an algebraic number field, $k\geq 3, \mathfrak{u}\subset Y(K),$ and $p$ a good prime for $(M,k,K,\mathfrak{u}).$  If $\dim S_k(X)=0,$ then the following are true.
    \begin{enumerate}
    \item $\mathbb{H}^1(X,\Omega^\bullet_{\mathfrak{u},p})$ is a free $\mathfrak{o}_K\left[\frac{1}{M}\right]$-module of rank $(k-1)\cdot\#\mathfrak{u}$ and if $i\neq 1,$ then $\mathbb{H}^i(X,\Omega^\bullet_{\mathfrak{u},p})=0.$
    \item If $f\in S_k(X,\star\mathfrak{u},\mathfrak{o}_K\left[\frac{1}{M},\mathfrak{u}\right]),$ then 
    $$
    \left(p^{-\ord_{\mathfrak{u}}(f)-1}f \mod \left(p\cdot q\frac{d}{dq}\right)^{k-1}M_{2-k}\left(X,\star\mathfrak{u},\mathfrak{o}_K\left[\frac{1}{M},\mathfrak{u}\right]\right)\right)\in \H^1(X,\Omega^\bullet_{\mathfrak{u},p}).
    $$
    \item If $f\in S_k(X,\star\mathfrak{u},\mathfrak{o}_K\left[\frac{1}{M},\mathfrak{u}\right]),$ then the $j_{f,p}$ in Proposition~\ref{prop:main-ASD-weak} satisfies the inequality
    $$
    j_{f,p}\geq k+\ord_{\mathfrak{u}}(f).
    $$
    \end{enumerate}
\end{prop}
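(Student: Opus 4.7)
The plan is to adapt the arguments of Lemmas~\ref{lem:hypercohomology-free} and~\ref{lem:hypercohomology-mdular-comparison}, Proposition~\ref{prop:quasi}, and Corollary~\ref{cor:jfp1} with the complex $\Omega^\bullet_{\mathfrak{u},p}$ in place of $\Omega^\bullet_\mathfrak{u}$. The key new input is that the $p$-multiplier on the $0$th term of $\Omega^\bullet_{\mathfrak{u},p}$ generates extra factors of $p$, both in the residue short exact sequence (matching Scholl's construction for $\Omega^\bullet_p$ in the weakly holomorphic case) and in the local Leibniz-rule reduction of poles. These combine with the Frobenius computation to produce the sharper shift in $j_{f,p}$.

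For (1), I would form the residue short exact sequence
\[
0 \longrightarrow \Omega^\bullet_p \longrightarrow \Omega^\bullet_{\mathfrak{u},p} \longrightarrow R_{k-2,\mathfrak{u},p}[-1] \longrightarrow 0
\]
exactly as in the proof of Lemma~\ref{lem:hypercohomology-free}, and pass to the long exact sequence in hypercohomology. Because $\dim S_k(X) = 0$, the short exact sequence in Theorem~\ref{thm:SchollDecomp2}(1) with $m = p$ collapses to force $\mathbb{H}^i(X, \Omega^\bullet_p) = 0$ for all $i$. Combined with the acyclicity of the skyscraper sheaf $R_{k-2,\mathfrak{u},p}$, this yields an isomorphism $\mathbb{H}^1(X, \Omega^\bullet_{\mathfrak{u},p}) \cong \bigoplus_{u \in \mathfrak{u}} \mathcal{E}_{k-2,p,u}$, which is free of rank $(k-1) \cdot \#\mathfrak{u}$ over $\mathfrak{o}_K[\tfrac{1}{M}]$, and vanishing in all other cohomological degrees.

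For (2), I would prove a $p$-sensitive analogue of Proposition~\ref{prop:quasi}: the cokernel of the map on first homology sheaves induced by $\Omega^\bullet_{\mathfrak{u},p} \to \Omega^\bullet_{\mathfrak{u},p}(m\mathfrak{u})$ is killed by $p^{m-1}\cdot m!$. Locally at $u$, with $\mathcal{E}_{1,p} = \beta^{-1}(p\boldsymbol{\omega}^{-1})$ generated by $\omega$ and $p\xi$, the sections $\omega^{k-2-i}(p\xi)^i$ generate $\mathcal{E}_{k-2,p}$, and the $0$th term $p\mathcal{E}_{k-2,p}$ requires one additional factor of $p$. Each Leibniz-rule step in the inductive argument of Proposition~\ref{prop:quasi} then produces one extra factor of $p$ in order to fit the antiderivative inside $p\mathcal{E}_{k-2,p}$, giving the $p^{m-1}$ cumulatively. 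Running the Leray spectral sequence argument of Corollary~\ref{cor:jfp1} with this refined bound shows that $p^{n-1}f$, where $n := -\ord_\mathfrak{u}(f)$, lifts to $\mathbb{H}^1(X, \Omega^\bullet_{\mathfrak{u},p})$ modulo $(pq\tfrac{d}{dq})^{k-1} M_{2-k}(X, \star\mathfrak{u}, \mathfrak{o}_K[\tfrac{1}{M}, \mathfrak{u}])$, yielding (2).

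For (3), I would combine (2) with the Cayley--Hamilton argument from Proposition~\ref{prop:main-ASD-weak}. The canonical Frobenius $F^r$ acts on $\mathbb{H}^1(X, \Omega^\bullet_{\mathfrak{u},p})$ with $t$-expansion effect modulo $(pq\tfrac{d}{dq})^{k-1}\mathfrak{o}_L[[t]] = p^{k-1}(q\tfrac{d}{dq})^{k-1}\mathfrak{o}_L[[t]]$, which supplies an extra $p^{k-1}$ of divisibility beyond Proposition~\ref{prop:main-ASD-weak}. Applying Cayley--Hamilton to the class of $p^{n-1}f$ and dividing the resulting congruence through by $p^{n-1}$ gives the stated divisibility for $f$ with $j_{f,p} \geq (k-1) - (n-1) = k - n = k + \ord_\mathfrak{u}(f)$. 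The main obstacle is establishing the refined version of Proposition~\ref{prop:quasi}: one must verify, step by step in the pole-reduction induction, that the $\nabla_{k-2}$-antiderivative of a pole-order-$m$ section lands in $p\mathcal{E}_{k-2,p} \otimes \tfrac{1}{T^{m-1}}R[T]$ only after an additional multiplication by $p$, so that the accumulated $p$-power is exactly $p^{m-1}$ rather than some smaller power.
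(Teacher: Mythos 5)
Your overall route is the paper's: the proposition is proved by rerunning Lemmas~\ref{lem:hypercohomology-free} and \ref{lem:hypercohomology-mdular-comparison}, Proposition~\ref{prop:quasi}, and Corollary~\ref{cor:jfp1} for $\Omega^\bullet_{\mathfrak{u},p}$. Your part (1) (using Theorem~\ref{thm:SchollDecomp2} with $m=p$ and $\dim S_k(X)=0$ to kill $\mathbb{H}^i(X,\Omega^\bullet_p)$, then the residue long exact sequence and acyclicity of the skyscraper) and your deduction of (3) from (2) (Frobenius acting modulo $\left(p\,q\frac{d}{dq}\right)^{k-1}$, Cayley--Hamilton, then dividing by $p^{-\ord_{\mathfrak{u}}(f)-1}$) are correct and are exactly what the paper intends.

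The gap is in the quantitative heart of (2), your $p$-refined analogue of Proposition~\ref{prop:quasi}. The cokernel on first homology sheaves for $\Omega^\bullet_{\mathfrak{u},p}\to\Omega^\bullet_{\mathfrak{u},p}(m\mathfrak{u})$ is killed by $p^{m}\cdot m!$, not $p^{m-1}\cdot m!$. Already at $m=1$ your constant asserts surjectivity, which fails: locally at $u$ a section with polar part $h(0)\,\omega^{k-2}\,T^{-2}dT$ ($h(0)$ a unit) can only have its order-two pole removed by subtracting $\nabla_{k-2}$ of a section of $p\mathcal{E}_{k-2,p}\otimes T^{-1}R[T]$, and since $\omega^{k-2}$ is already a generator of $\mathcal{E}_{k-2,p}$ carrying no $p$, the degree-zero term being $p\mathcal{E}_{k-2,p}$ forces the leading coefficient into $pR$ at this very first step; so the extra factor of $p$ is needed from step one, and the cumulative count over the $m$ Leibniz steps (where at the step treating the pole of order $m+2-j$ the components $\omega^{k-2-i}\xi^{i}$ with $i\le j-1$ occur and require divisibility by $p^{i+1}$) gives $p^{m}$, not $p^{m-1}$. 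Note also that your stated lemma is inconsistent with your own conclusion: with $m=-\ord_{\mathfrak{u}}(f)-1$ it would yield that $p^{-\ord_{\mathfrak{u}}(f)-2}\,(-\ord_{\mathfrak{u}}(f)-1)!\,f$ lifts, which is stronger than the $p^{-\ord_{\mathfrak{u}}(f)-1}f$ you need and is false in general, while the corrected constant yields $p^{-\ord_{\mathfrak{u}}(f)-1}(-\ord_{\mathfrak{u}}(f)-1)!\,f$. Finally, you silently drop the factorial: to recover the clean statements (2) and (3) you should observe that $(-\ord_{\mathfrak{u}}(f)-1)!$ divides $(k-2)!$ in the range $-\ord_{\mathfrak{u}}(f)\le k-1$ (the only range in which (3) improves on Corollary~\ref{cor:jfp1} and is used in Theorem~\ref{thm:main-ASD}), hence is prime to $p$ for a good prime, or that the relevant factorial is already inverted in the coefficient ring as in the setup of Section~\ref{sec:de-Rham-meromorphic}.
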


Finally, in order to prove Corollary~\ref{cor:ASD-basis}, we require the effect of the quasi-isomorphisms in Proposition~\ref{prop:quasi} on the Hodge filtrations. To this end, we first recall some facts regarding this filtration (see proof of Theorem 2.7 of \cite{Scholl2} or Section 3 of \cite{KS16}). The Hodge filtration on $\mathcal{E}$ is defined as $F_{\Hdg}^1\mathcal{E}=\boldsymbol{\omega}$ and $F_{\Hdg}^0\mathcal{E}=\mathcal{E}.$ On $\mathcal{E}_{k-2},$ we denote by $F^\bullet$ the natural filtration induced on $\mathcal{E}_{k-2}$ by $F_{\Hdg}.$ In this notation, we have Griffiths transversality
\begin{equation}\label{eq:Griffiths}
\nabla_{k-2}(F^i\mathcal{E}_{k-2})\subset F^{i-1}\mathcal{E}_{k-2}\otimes\Omega_X^1(\log Z).
\end{equation}
Therefore, we have the natural definition of $F^\bullet$ on $\Omega_{\mathfrak{u}}^\bullet$ defined by
\begin{align*}
&F^i\Omega_{\mathfrak{u}}^0 = F^i\mathcal{E}_{k-2} \\
&F^i\Omega_{\mathfrak{u}}^1 = \Omega_{\mathfrak{u}}^1\cap(F^{i-1}\mathcal{E}_{k-2}\otimes\Omega_X^1(\log(Z\cup\mathfrak{u})))
\end{align*}
with the obvious analogous definition for $\Omega^\bullet_{\mathfrak{u},p}.$
Moreover, the Hodge filtration on $\mathcal{E}$ is clearly compatible with the Hodge filtration on $\mathcal{E}_x\cong H^1_{dR}(E^{\univ}_x)$ for any $x\in X.$ Therefore, from (\ref{eq:Griffiths}) and the proof of Proposition~\ref{prop:quasi}, we have that
\begin{equation}\label{eq:ResShift}
\Res(F^i\Omega_{\mathfrak{u}}^1)\subset F^{i-1}H^1_{dR}(E^{\univ}_u)
\end{equation}
with a similar statement for $\Omega^\bullet_{\mathfrak{u},p}.$

In this notation, we have the following relation between pole orders of meromorphic modular forms and the Hodge filtration. 

\begin{lemma}\label{lem:quasiHodge}
Let $k\geq 3,$ $K$ an algebraic number field and $u\in Y(K).$ If $\dim S_k(X)=0$ and $f\in S_k(X,\star u,K),$ then we have $\phi(f)\in F^{k-1+\ord_u(f)}(R_{k-2,u}\otimes K),$ where $\phi$ is
the isomorphism
$$
\phi:\frac{S_k(X,\star u,K)}{\partial^{k-1}M_{2-k}(X,\star u,K)}\xrightarrow{\sim}\H^1(U,\Omega^\bullet|_U\otimes K)\xrightarrow{\sim} \H^1(X,\Omega^\bullet_{u}\otimes K)\xrightarrow[Res]{\sim} R_{k-2,u}\otimes K.
$$ 
\end{lemma}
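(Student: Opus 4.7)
The plan is to compute $\phi(f)$ locally at $u$ by performing the pole-reduction procedure underlying Proposition~\ref{prop:quasi} while carefully tracking the Hodge filtration throughout. Choose a local parameter $T$ at $u$ and a local trivialization $\omega,\xi$ of $\mathcal{E}$ on an affine neighborhood of $u$ with $\omega$ generating $\boldsymbol{\omega}=F^1\mathcal{E}$. Then $\{\omega^{k-2-i}\xi^i\}_{0\leq i\leq k-2}$ is a local basis of $\mathcal{E}_{k-2}$ and $F^j\mathcal{E}_{k-2}$ is locally spanned by the basis elements with $i\leq k-2-j$. Since $f\in S_k(X,\star u,K)=H^0(X,\boldsymbol{\omega}^{k-2}\otimes\Omega^1_X(\star u))$ has $\ord_u(f)=-m$, its local expansion at $u$ is $f=(g(T)/T^m)\,\omega^{k-2}\otimes dT$ with $g\in\O_u,\,g(0)\neq 0$. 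In particular, viewed as a section of $\mathcal{E}_{k-2}\otimes\Omega^1_X$, the $\mathcal{E}_{k-2}$-component of $f$ lies entirely in the top step $F^{k-2}\mathcal{E}_{k-2}$ locally near $u$.

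Next, I would invoke the pole-reduction identity underlying the proof of Proposition~\ref{prop:quasi}: for $s\geq 2$ and any local section $hv\cdot dT/T^s$ (with $h$ a regular function and $v$ a local section of $\mathcal{E}_{k-2}$), the Leibniz rule gives
$$
hv\cdot\frac{dT}{T^s} = \frac{1}{s-1}\cdot\frac{\nabla_{k-2}(hv)}{T^{s-1}} - \frac{1}{s-1}\,\nabla_{k-2}\!\left(\frac{hv}{T^{s-1}}\right),
$$
so that modulo $\nabla_{k-2}$-exact forms, $hv\cdot dT/T^s$ is replaced by $(s-1)^{-1}\nabla_{k-2}(hv)/T^{s-1}$, a form of pole order $s-1$. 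Starting from $f$ and iterating this identity $m-1$ times produces a representative of $\phi(f)\in\H^1(X,\Omega^\bullet_u\otimes K)$ with only a simple pole at $u$.

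The key step is to track the Hodge filtration through the iteration. If $v\in F^j\mathcal{E}_{k-2}$, then by the Leibniz rule together with Griffiths transversality~\eqref{eq:Griffiths},
$$
\nabla_{k-2}(hv)=dh\cdot v+h\,\nabla_{k-2}(v)\in F^{j-1}\mathcal{E}_{k-2}\otimes\Omega^1_X(\log Z),
$$
since $dh\cdot v\in F^j\subseteq F^{j-1}$ while $h\,\nabla_{k-2}(v)\in F^{j-1}\mathcal{E}_{k-2}\otimes\Omega^1_X(\log Z)$. Thus a single reduction sends a form with $\mathcal{E}_{k-2}$-component in $F^j$ and pole order $s$ to one with $\mathcal{E}_{k-2}$-component in $F^{j-1}$ and pole order $s-1$. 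Starting from $j=k-2,\,s=m$ and performing $m-1$ reductions yields a representative whose $\mathcal{E}_{k-2}$-component lies in $F^{k-m-1}\mathcal{E}_{k-2}$ with only a simple pole at $u$. By the definition of $F^\bullet\Omega^1_u$, this representative lies in $F^{k-m}\Omega^1_u$, and~\eqref{eq:ResShift} then yields
$$
\phi(f)\in F^{k-m-1}H^1_{dR}(E^{\univ}_u)=F^{k-1+\ord_u(f)}(R_{k-2,u}\otimes K).
$$

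The hypothesis $\dim S_k(X)=0$ ensures, via the short exact sequence~\eqref{eq:sesAlgLog}, that $\Res$ is genuinely an isomorphism $\H^1(X,\Omega^\bullet_u\otimes K)\xrightarrow{\sim}R_{k-2,u}\otimes K$, so that $\phi(f)$ is unambiguous with no contribution from a $\H^1(X,\Omega^\bullet\otimes K)$-summand. The main delicate point is the bookkeeping of Hodge levels across iterations: although the summand $dh\cdot v$ inside $\nabla_{k-2}(hv)$ remains a priori at level $F^j$, the chain of inclusions $F^{k-2}\subseteq F^{k-3}\subseteq\cdots$ absorbs it, so that after $m-1$ applications every resulting summand is collectively contained in $F^{k-m-1}$, which is exactly what the estimate requires.
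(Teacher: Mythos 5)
Your argument is correct and is essentially the paper's own proof: the paper likewise represents $f$ as a section of $\boldsymbol{\omega}^{k-2}\otimes\Omega^1_X$ with pole order $-\ord_u(f)$, invokes the pole-reduction from the proof of Proposition~\ref{prop:quasi} together with Griffiths transversality \eqref{eq:Griffiths} to get a representative in $F^{k+\ord_u(f)}\Omega^1_u\otimes K$, and concludes with \eqref{eq:ResShift}. Your only deviation is making the reduction step explicit with the division by $s-1$, which is harmless here since one works over the characteristic-zero field $K$ (the paper's integral version of the reduction is what produces the $m!$ in Proposition~\ref{prop:quasi}).
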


\begin{proof}
    If $f\in S_k(X,(k-2)\cdot u,K)$\footnote{If $\delta\geq 0,$ $S_k(X,\delta\cdot u,K)$ is the subspace of forms in $S_k(X,\star u,K)$ with pole orders at $u$ at most $\delta.$}, then the image of $f$ under 
    $$
    \frac{S_k(X,\star u,K)}{\partial^{k-1}M_{2-k}(X,\star u,K)}\xrightarrow{\sim}\H^1(U,\Omega^\bullet|_U\otimes K)
    $$
    lies in $\frac{H^0(U,\boldsymbol{\omega}^{k-2}\otimes\Omega_X^1)}{\partial^{k-1}H^0(U,\boldsymbol{\omega}^{2-k})}$ (note that $U$ is affine). By the proof of Proposition~\ref{prop:quasi} and (\ref{eq:Griffiths}), the image of $f$ in $\H^1(X,\Omega^\bullet_{\mathfrak{u}}\otimes K)$ under the quasi-isomorphism is locally represented by a section of $F^{k+\ord_u(f)}\Omega^1_u\otimes K.$ The claim follows from (\ref{eq:ResShift}).
    
\end{proof}

\section{CM and de Rham cohomology}\label{sec:CM-de-Rham}

To obtain Corollary~\ref{cor:ASD-basis} from Theorem~\ref{thm:main-ASD}, we show that when $\dim S_k(X)=0,$ the CM structure on the fibers at poles endows the meromorphic modular forms with additional structure.

To this end, we first show that we can work at each pole $u\in\mathfrak{u}$ separately. 

\begin{lemma}\label{lem:separation}
    Let $K$ be an algebraic number field, $k\geq 3,$ and $\mathfrak{u}\subset Y(K).$ If $\dim S_k(X)=0,$ then there exists a basis 
    $$
    \{f_{u,i} | u \in \mathfrak{u}, 0\leq i\leq k-2\}
    $$
    of
    $\frac{S_k(X,\star\mathfrak{u},K)}{\partial^{k-1}M_{2-k}(X,\star\mathfrak{u},K)}$ where each $f_{u,i}\in S_k(X,(i+1)\cdot u,K).$\footnote{In this lemma, the $f_{u,i}$ are chosen so that the pole has order exactly $i+1.$}
\end{lemma}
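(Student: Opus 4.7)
The plan is to decompose $V_{\mathfrak{u}} := \frac{S_k(X,\star\mathfrak{u},K)}{\partial^{k-1}M_{2-k}(X,\star\mathfrak{u},K)}$ pole by pole via the residue map, and then at each pole match the pole-order filtration against the Hodge filtration by a Riemann--Roch dimension count. Since $\dim S_k(X)=0$, Theorem~\ref{thm:SchollDecomp2} forces $\mathbb{H}^1(X,\Omega^\bullet\otimes K)=0$, so the short exact sequence~(\ref{eq:sesAlgLog}) shows that the residue map is an isomorphism $\phi\colon V_{\mathfrak{u}}\xrightarrow{\sim}\bigoplus_{u\in\mathfrak{u}}\mathcal{E}_{k-2,u}\otimes K$. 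Applying the same reasoning to each singleton $\{u\}\subseteq\mathfrak{u}$ and invoking naturality in $\mathfrak{u}$ yields $V_{\mathfrak{u}}=\bigoplus_u V_u$, where $V_u:=\frac{S_k(X,\star u,K)}{\partial^{k-1}M_{2-k}(X,\star u,K)}\cong\mathcal{E}_{k-2,u}\otimes K$ via Lemma~\ref{lem:hypercohomology-mdular-comparison}. In particular $\dim V_u=k-1$, so it suffices to construct the basis at a single pole.

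Fix $u$ and define the pole-order filtration $W_n:=\operatorname{image}(S_k(X,n\cdot u,K)\to V_u)$. Lemma~\ref{lem:quasiHodge} provides the Hodge upper bound $\phi(W_n)\subseteq F^{k-1-n}(\mathcal{E}_{k-2,u}\otimes K)$, and since $\dim F^{k-1-n}=n$ we get $\dim W_n\leq n$ for $0\leq n\leq k-1$. To match this from below, first observe that $\partial^{k-1}$ is injective on $M_{2-k}(X,\star u,K)$ (a nonzero-weight modular form that is constant must vanish), and since $u\in Y(K)$ lies away from the cusps, $\partial=q\,\frac{d}{dq}$ acts in a local parameter $t$ at $u$ as a unit times $\frac{d}{dt}$. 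Hence $\partial^{k-1}$ sends a form with pole of order $m\geq 1$ at $u$ to one with pole of order exactly $m+(k-1)$, so
\[
S_k(X,n\cdot u,K)\cap\partial^{k-1}M_{2-k}(X,\star u,K)=\partial^{k-1}M_{2-k}(X,(n-k+1)\cdot u,K)
\]
for $n\geq k-1$, and the intersection is zero for $n<k-1$. A Riemann--Roch/Serre-duality computation, using $\dim S_k(X)=0$ together with the Kodaira--Spencer identity $2\deg\boldsymbol{\omega}=2g-2+\#Z$ (which together force $(k-2)\deg\boldsymbol{\omega}=1-g$) and the vanishing of $H^1$ coming from the absence of negative-weight holomorphic forms once $\deg\boldsymbol{\omega}>0$, yields $\dim S_k(X,n\cdot u,K)=n$ and $\dim M_{2-k}(X,m\cdot u,K)=m$ for all $n,m\geq 0$. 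Substituting gives $\dim W_n=n$ for $0\leq n\leq k-1$, attaining the Hodge upper bound.

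Consequently each $W_n/W_{n-1}$ is one-dimensional for $n=1,\ldots,k-1$, and I take $f_{u,i}$ to be any representative in $S_k(X,(i+1)\cdot u,K)$ whose class generates $W_{i+1}/W_i$. Because $[f_{u,i}]\notin W_i$, no representative of $[f_{u,i}]$ has pole of order $\leq i$ at $u$, so $f_{u,i}$ has pole of order exactly $i+1$, as required. Collecting over $u\in\mathfrak{u}$ produces the desired basis. The main delicate step is the Riemann--Roch calculation: it rests on $\deg\boldsymbol{\omega}>0$ (the generic case for smooth modular curves with enough cusps), under which $\dim S_k(X)=0$ tightens to the equality $\deg\boldsymbol{\omega}^k(-Z)=g-1$ and the two dimension formulas fall into place simultaneously, making the pole-order filtration a lift of the Hodge filtration via $\phi$.
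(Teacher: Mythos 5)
Your proof is correct and follows essentially the same route as the paper: Riemann--Roch (with $\deg\boldsymbol{\omega}>0$ killing the relevant dual/$H^1$ terms) produces cusp forms with each exact pole order $1,\dots,k-1$ at every $u\in\mathfrak{u}$, and the $(k-1)\cdot\#\mathfrak{u}$ dimension count coming from Lemma~\ref{lem:hypercohomology-free} (equivalently, the residue isomorphism when $\dim S_k(X)=0$) shows these classes form a basis. The paper argues more tersely---it needs only the inequality $\dim S_k(X,n\cdot u,K)\geq n$ together with the observation that nonzero elements of $\partial^{k-1}M_{2-k}(X,\star\mathfrak{u},K)$ have a pole of order at least $k$ at some $u$---so your exact dimension formulas, the identification of the kernel of $S_k(X,n\cdot u,K)\to V_u$, and the Hodge-filtration upper bound from Lemma~\ref{lem:quasiHodge} are correct but not strictly needed.
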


\begin{proof}
    We first prove that for a pole of order $n\geq 1$ and $u\in\mathfrak{u},$ there exists $f\in S_k(X,nu,K).$ To this end, consider $\mathcal{L}:=\omega^{k-2}\otimes\O_X(nu).$ Applying the Riemann--Roch theorem (for example, see IV.1 of \cite{HartshorneAG}) to $\mathcal{L}^{-1}$  gives that
    $$
    \dim H^0(X,\mathcal{L}\otimes\Omega_X^1) - \dim H^0(X,\mathcal{L}^{-1}) = \deg(\mathcal{L}) - 1 + g,
    $$
    where $g$ is the genus of $X.$ Since $\mathcal{L}$ is ample (see VII.3.4 of \cite{DR}), we have that $\dim H^0(X,\mathcal{L}^{-1})=0$ and thus
    $$
    \dim H^0(X,\omega^{k-2}\otimes\Omega_X^1(nu)) = (k-2)\deg\omega+(n-1)+g\geq n.\footnote{In fact, by our hypothesis that $\dim S_k(X)=0,$ one can compute the dimension directly.}
    $$
    Therefore, for each $u\in\mathfrak{u}$ and $0\leq i\leq k-2,$ we can choose $f_{u,i}$ with exact pole order $i+1$ at $u$ and that vanishes at the cusps. The $f_{u,i}$ are clearly linearly independent and by Lemma~\ref{lem:hypercohomology-free}, they form a basis.
\end{proof}

By Lemma~\ref{lem:separation}, it suffices to work with the case where $\mathfrak{u}=\{u\}.$ In order to obtain a basis which does not depend on $p,$ we make use of the commutativity of $F$ and CM structure.

\begin{lemma}\label{lem:CMMod}
    Let $K$ be an algebraic number field, $k\geq 3$ such that $\dim S_k(X)=0,$ and $u\in Y(K)$ such that $E^{\univ}_{\kappa(u)}$ is a CM elliptic curve by an order $\O$ in an imaginary quadratic subfield of $K.$ If $p$ is good for $(M,k,K,u)$ and $E^{\univ}_{\kappa(u)}$ has good reduction, then there exists an action of $\O$ on $\frac{S_k(X,\star u,K)}{\partial^{k-1}M_{2-k}(X,\star u,K)}$ that commutes with $F.$
\end{lemma}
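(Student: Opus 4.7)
The plan is to leverage the hypothesis $\dim S_k(X)=0$ to collapse the short exact sequence (\ref{eq:sesAlgLog}) into a residue isomorphism between the hypercohomology and the stalk $\Sym^{k-2}H^1_{dR}(E^{\univ}_{\kappa(u)})$, then transport the natural CM action on that stalk back to the modular form quotient, and finally verify Frobenius-commutativity by functoriality of crystalline/de Rham cohomology.

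First, by Lemma~\ref{lem:hypercohomology-mdular-comparison} we have
$$
\frac{S_k(X,\star u,K)}{\partial^{k-1}M_{2-k}(X,\star u,K)} \cong \mathbb{H}^1(X,\Omega^\bullet_u\otimes K).
$$
The exact sequence (\ref{eq:sesAlgLog}) from the proof of Lemma~\ref{lem:hypercohomology-free}, together with the assumption $\dim S_k(X) = 0$ (so that $\mathbb{H}^1(X,\Omega^\bullet\otimes K) = 0$), collapses to the residue isomorphism
$$
\Res: \mathbb{H}^1(X,\Omega^\bullet_u\otimes K) \xrightarrow{\sim} \Sym^{k-2}H^1_{dR}(E^{\univ}_{\kappa(u)})\otimes K.
$$

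Next, the CM order $\O$ acts on $E^{\univ}_{\kappa(u)}$ by hypothesis. By functoriality of de Rham cohomology, this induces an $\O$-action on $H^1_{dR}(E^{\univ}_{\kappa(u)})$ and hence on $\Sym^{k-2}H^1_{dR}(E^{\univ}_{\kappa(u)})$. Pulling back by $\Res$ defines the desired action of $\O$ on the modular form quotient. The analogous construction using the integral short exact sequence from Lemma~\ref{lem:hypercohomology-free} defines an $\O$-action on $\mathbb{H}^1(X,\Omega^\bullet_u\otimes\mathfrak{o}_{K_p})$ compatible with base change, since good reduction of $E^{\univ}_{\kappa(u)}$ at $p$ and $\O\subset K\hookrightarrow K_p$ (with $K_p/\Q_p$ unramified) ensure that every $\alpha\in\O$ extends to an endomorphism of the integral model.

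Finally, to verify commutativity with $F$, observe that under the residue isomorphism (and the integral analogue) the Frobenius $F$ on $\mathbb{H}^1(X,\Omega^\bullet_u\otimes\mathfrak{o}_{K_p})$ corresponds, up to the twist by $p$ recorded in Lemma~\ref{lem:charFLog}, to the crystalline Frobenius on $\Sym^{k-2}H^1_{dR}(E^{\univ}_{\kappa(u)})$ induced by the $F$-crystal structure of Subsection~\ref{subsec:pAdicTheory}. Since each $\alpha\in\O$ is a morphism of schemes defined over $\mathfrak{o}_{K_p}$, functoriality of crystalline cohomology yields that the induced $\mathfrak{o}_{K_p}$-linear action of $\alpha$ on $H^1_{dR}$ commutes with the $\sigma$-linear crystalline Frobenius; this commutativity passes to $\Sym^{k-2}$ and, via $\Res$, to $[\alpha,F] = 0$ on the quotient.

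The only delicate point is checking that the residue isomorphism is $F$-equivariant (with the appropriate $p$-twist), which is already contained in the proof of Lemma~\ref{lem:charFLog}; once this is granted the commutativity becomes a direct consequence of functoriality. The hypotheses that $p$ is a good prime for $(M,k,K,u)$ and that $E^{\univ}_{\kappa(u)}$ has good reduction are used precisely to ensure that the CM endomorphisms can be taken integrally on the relevant crystal.
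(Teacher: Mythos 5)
Your proposal is correct and follows essentially the same route as the paper: collapse the residue short exact sequence using $\dim S_k(X)=0$ to get $\mathbb{H}^1(X,\Omega^\bullet_u\otimes K)\cong\Sym^{k-2}H^1_{dR}(E^{\univ}_{\kappa(u)})\otimes K$ (up to Tate twist), transport the CM action through this isomorphism and Lemma~\ref{lem:hypercohomology-mdular-comparison}, and deduce commutativity with $F$ from the fact that $\O$ and the crystalline Frobenius commute on the stalk at $u$. Your write-up merely makes explicit the functoriality and integrality points (good reduction, endomorphisms over $\mathfrak{o}_{K_p}$) that the paper leaves implicit.
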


\begin{proof}
    The short exact sequence (\ref{eq:sesAlgLogFrob}) and part (1) of Theorem~\ref{thm:SchollDecomp2} shows that the residue map
    $$
    \begin{tikzcd}
    {\H^1(X,\Omega^\bullet_{u}\otimes K)} \arrow[r, "\Res"] & {R_{k-2,u}(-1)\otimes K}
    \end{tikzcd}
    $$
    is an isomorphism. Fixing an embedding $K\hookrightarrow K_p,$ since $\O$ acts on $R_{k-2,u},$ we have an action of $\O$ on $\H^1(X,\Omega^\bullet_{u}\otimes K_p)$ (note that these constructions are compatible with flat base-change). Moreover, since the actions of $\O$ and $F$ commute on $R_{k-2,u},$ they commute on $\H^1(X,\Omega^\bullet_{u}\otimes K_p)$ and the claim follows by Lemma~\ref{lem:hypercohomology-mdular-comparison}.
\end{proof}

Using this commutativity, we obtain a ``global'' decomposition of the basis. 

\begin{lemma}\label{lem:indDecomp}
    Let $K$ be an algebraic number field, $k\geq 3$ with $\dim S_k(X)=0,$ and $u\in Y(K)$ such that $E_{\kappa(u)}^{\univ}$ is a CM elliptic curve by an order $\O$ in an imaginary quadratic subfield of $K.$  The basis $\{f_{u,i} | 0\leq i\leq k-2\}$ in Lemma~\ref{lem:separation} can be chosen such that for all primes $p$ that are good for $(M,k,K,u)$ and for which $E^{\univ}_{\kappa(u)}$ has good reduction, we have \footnote{By abuse of notation, we are denoting the classes of $f_{u,i}$ by $f_{u,i}$ as well.}
    $$
    F(f_{u,i}) =  p\cdot c_{u,i}\cdot f_{u,\tilde{\sigma}(i)}, 
    $$
    where  $c_{u,i}\in p^{k-2-i}\mathfrak{o}_{K_p}$  and  $\tilde{\sigma}$ is as in Corollary~\ref{cor:ASD-basis}. 
\end{lemma}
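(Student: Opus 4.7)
The plan is to transport the CM eigendecomposition of $H^1_{dR}(E^{\univ}_{\kappa(u)})$ to $V := \frac{S_k(X,\star u,K)}{\partial^{k-1}M_{2-k}(X,\star u,K)}$ via the residue isomorphism, and then to exploit strong divisibility of Frobenius on the Hodge filtration. By the proof of Lemma~\ref{lem:CMMod}, the residue map gives an $\mathcal{O}$-equivariant isomorphism $\phi: V \xrightarrow{\sim} R_{k-2,u}(-1)\otimes K.$ Since the CM subfield sits inside $K,$ the de Rham cohomology $\mathcal{E}_u = H^1_{dR}(E^{\univ}_{\kappa(u)})$ splits over $K$ as $\mathcal{E}_u\otimes K = L_1 \oplus L_2$ with $L_1 = \boldsymbol{\omega}_u$ the Hodge line, and the symmetric power yields the $\mathcal{O}$-eigendecomposition
\[
R_{k-2,u}\otimes K \;=\; \bigoplus_{i=0}^{k-2} L_1^{k-2-i}\otimes L_2^{i},
\]
with character $\chi^{k-2-i}\bar{\chi}^{i}$ on the $i$-th summand. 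Moreover, each such summand is contained in $F^{k-2-i}\Sym^{k-2}\mathcal{E}_u$ and projects isomorphically onto $F^{k-2-i}/F^{k-1-i}.$

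I would then choose $f_{u,i}$ to be a preimage under $\phi$ of an integral generator of $L_1^{k-2-i}\otimes L_2^{i},$ scaled to lie in $S_k(X,\star u,\mathfrak{o}_K[\tfrac{1}{M},u,\tfrac{1}{u}])$ (possible since the decomposition is defined over $K$). Because the chosen generator is not contained in $F^{k-1-i},$ Lemma~\ref{lem:quasiHodge} forces the pole order of $f_{u,i}$ at $u$ to be at least $i+1.$ Conversely, Lemma~\ref{lem:separation} produces forms with pole orders $1,2,\ldots,i+1$ whose $\phi$-images span the $(i+1)$-dimensional space $F^{k-2-i},$ so every element of $F^{k-2-i}$ is the $\phi$-image of a form of pole order at most $i+1.$ Hence $f_{u,i}$ has pole of exact order $i+1$ at $u,$ and since the lines $L_1^{k-2-i}\otimes L_2^{i}$ span $R_{k-2,u}\otimes K,$ the $f_{u,i}$ form a basis of $V.$

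For the Frobenius behavior, Lemma~\ref{lem:CMMod} gives that the $\sigma$-semilinear operator $F$ commutes with the $\mathcal{O}$-action. For $a\in\mathcal{O}$ and $v\in V_i$ I compute $a\cdot F(v) = F(a\cdot v) = \sigma(\chi(a)^{k-2-i}\bar{\chi}(a)^i)\cdot F(v),$ so $F(v)$ lies in the eigenspace with character $(\sigma\chi)^{k-2-i}(\sigma\bar{\chi})^{i}.$ When $\sigma$ fixes the CM field in $K_p$ (equivalently, $p$ splits in the CM field and $E^{\univ}_{\kappa(u)}$ has ordinary reduction), this equals the original character and $F(V_i)\subset V_i,$ giving $\tilde{\sigma}(i)=i;$ otherwise $\sigma$ swaps $\chi\leftrightarrow\bar{\chi}$ and $F(V_i)\subset V_{k-2-i},$ giving $\tilde{\sigma}(i)=k-2-i.$ In either case $F(f_{u,i}) = \lambda_i\cdot f_{u,\tilde{\sigma}(i)}$ for some $\lambda_i\in K_p.$

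The final and most delicate step, which I expect to be the main obstacle, is the divisibility $\lambda_i\in p^{k-1-i}\mathfrak{o}_{K_p}.$ This is a strong divisibility statement for the $F$-crystal $\mathcal{E}_{k-2}^{\infty}$ with its Hodge filtration: under the hypothesis $p\nmid(k-2)!$ (which places us in the Fontaine--Laffaille range), the base Frobenius $F_0$ on the stalk $R_{k-2,u}$ satisfies $F_0(F^j R_{k-2,u})\subset p^j R_{k-2,u}$ for each $j.$ Since the short exact sequence~(\ref{eq:sesAlgLogFrob}) shows $F$ acts on $R_{k-2,u}(-1)$ as $pF_0,$ and since $f_{u,i}\in F^{k-2-i},$ we obtain $F(f_{u,i})\in p\cdot p^{k-2-i}R_{k-2,u}=p^{k-1-i}R_{k-2,u}.$ Because $f_{u,\tilde{\sigma}(i)}$ is an integral generator, this forces $\lambda_i\in p^{k-1-i}\mathfrak{o}_{K_p};$ writing $\lambda_i=p\cdot c_{u,i}$ yields $c_{u,i}\in p^{k-2-i}\mathfrak{o}_{K_p}$ as required.
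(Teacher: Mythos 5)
Your proposal is correct and follows essentially the same route as the paper: diagonalize the CM action on $R_{k-2,u}\otimes K$ into the lines $L_1^{k-2-i}\otimes L_2^{i}$ (the paper does this by picking $\alpha\in\O$ with $\sigma(\alpha)/\alpha$ not a root of unity and taking the eigenbasis with eigenvalues $\alpha^{k-2-i}\sigma(\alpha)^i$), use the $\sigma$-semilinearity of $F$ together with Lemma~\ref{lem:CMMod} to see that $F$ sends the $i$-th eigenline to the $\tilde{\sigma}(i)$-th, and obtain the divisibility $\ord_p(c_{u,i})\geq k-2-i$ from the Hodge-filtration position supplied by Lemma~\ref{lem:quasiHodge} combined with Mazur's theorem on Frobenius and the Hodge filtration (your Fontaine--Laffaille strong-divisibility statement is exactly this input). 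Your extra care in checking that the eigenvectors can be taken with exact pole order $i+1$ and suitably normalized integrally is a welcome elaboration of points the paper leaves implicit, but it is not a different argument.
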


\begin{proof}
    Choose $\alpha\in\O\setminus\Z$ such that $\frac{\sigma(\alpha)}{\alpha}$ is not a root of unity. Then there are $(k-1)$ distinct eigenvalues of the action of $\alpha$ on $R_{k-2,u}$ and they all lie in $K.$ Therefore, there exists a unique (up to scalar multiple) diagonalization of $\alpha$ on $R_{k-2,u}$ over $K.$ Choose $\{f_{u,i}\}$ so that it is such a diagonal basis with $[\alpha](f_{u,i})=\alpha^{k-2-i}\sigma(\alpha)^if_{u,i}$\footnote{Note that the $f_i$ under $\Res$ map to $\omega_1^{k-2-i}\omega_2^i$ where $\omega_1\in F^1_{\Hdg}H^1_{dR}(E)$ and $\omega_1,\omega_2$ are eigenvectors of CM acting on the de Rham cohomology. This explains the form of the eigenvalues. The eigenvalues are defined up to the embedding of $\O$ into $K_p,$ but the argument is independent of this.}. By Lemma~\ref{lem:CMMod}, we have
    \begin{align*}
    [\alpha](F(f_{u,i}))&=F([\alpha](f_{u,i}))\\
    &= F(\alpha^{k-2-i}\cdot\sigma(\alpha)^i\cdot f_{u,i}) \\
    &= \sigma(\alpha)^{k-2-i}\alpha^iF(f_{u,i}).
    \end{align*}
since $\sigma^2(\alpha)=\alpha.$ Therefore, $F(f_{u,i})$ is an eigenvector of $[\alpha]$ with the same eigenvalue as $f_{u,\tilde{\sigma}(i)}.$ Therefore, there exists $c_{u,i}\in K$ with 
    $$
    F(f_i) = p\cdot c_{u,i} f_{u,\tilde{\sigma}(i)}.
    $$
    Moreover, by Theorem 1 of \cite{MazurFrobHodgeFil} and Lemma~\ref{lem:quasiHodge}, we have that $\ord_p(c_{u,i})\geq k-2-i,$ and this gives our claim.
\end{proof}

The relations for the $c_{u,i}$ are as follows.

\begin{lemma}\label{lem:indDecompOrd}
    In the notation of Lemma~\ref{lem:indDecomp}, if $[K_p:\Q_p]=r$ and $0\leq i\leq \frac{k-2}{2},$ then we have that
    \begin{equation}\label{eq:genEigenFormula}
    d_{u,i} +\frac{p^r}{d_{u,i}}=p^{ri}\cdot\left(p^{r(k-2-2i)}+1-\#E(\F_{p^{r(k-2-2i)}})\right),
    \end{equation}
    where 
    $$
    d_{u,i}:=\sigma^{r-1}\left(c_{u,i}^{\lfloor(r+1)/2\rfloor}\right)\cdot\sigma^r(c_{u,\tilde{\sigma}(i)}^{\lfloor r/2\rfloor}).
    $$
    Moreover, if $E^{\univ}_{\kappa(u)}$ has ordinary reduction, then $\ord_p(c_{u,i})=k-2-i.$
\end{lemma}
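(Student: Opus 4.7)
My plan is to treat the two cases that can occur at a CM pole separately: either $p$ splits in the CM order $\O\subset K$ (the ordinary case, where $\tilde{\sigma}=\id$) or $p$ is inert in $\O$ (forcing supersingular reduction, where $\tilde{\sigma}(i)=k-2-i$). In both cases the strategy is to first determine $c_{u,i}$ by examining the Frobenius action on the residue space $R_{k-2,u}=\Sym^{k-2}H^1_{dR}(E^{\univ}_{\kappa(u)})$, and then to iterate the $\sigma$-linear $F$ in order to extract the $K_p$-linear eigenvalues of $F^r$.

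The key geometric input in the ordinary case is Mazur's theorem on the compatibility of Frobenius with the Hodge filtration (already invoked in the proof of Lemma~\ref{lem:indDecomp}): for an ordinary CM elliptic curve the canonical CM decomposition of $H^1_{dR}(E)$ coincides with $F^1_{\Hdg}\oplus U$, where $U$ is the Frobenius unit-root subspace. Consequently $F$ acts on $\omega:=F^1_{\Hdg}$ with eigenvalue $p/\mu$ and on $U$ with eigenvalue $\mu$, where $\mu$ is the $p$-adic unit root of $T^2-a_pT+p$. Evaluating $F$ on the residue of $f_{u,i}$, which by construction maps to $\omega^{k-2-i}\otimes\eta^i$ for a choice of CM eigenvector $\eta\in U$, and accounting for the Tate twist in Lemma~\ref{lem:charFLog}, determines $c_{u,i}$ precisely; in particular one reads off $\ord_p(c_{u,i})=k-2-i$, which is the final assertion of the lemma.

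With $c_{u,i}$ understood, I would iterate $F$ on the two-dimensional subspace spanned by $f_{u,i}$ and $f_{u,k-2-i}$. In the split case both basis vectors are individually $F$-eigenvectors and $c_{u,i}\in\Z_p$, so all $\sigma$-twists collapse and the expression defining $d_{u,i}$ reduces to $c_{u,i}^r$. The product of the two $F^r$-eigenvalues is pinned down by $c_{u,i}\cdot c_{u,k-2-i}=p^{k-2}$, which in turn traces back to the fact that $\omega\otimes\eta\subset\Sym^2 H^1_{dR}(E)$ has Frobenius eigenvalue $\mu\cdot(p/\mu)=p$; the sum factors cleanly and, via the Weil trace identity $\mu^m+(p/\mu)^m=p^m+1-\#E(\F_{p^m})$ applied at $m=r(k-2-2i)$, produces the right-hand side of (\ref{eq:genEigenFormula}). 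In the supersingular case the same framework applies, except that $F$ now interchanges the two CM lines, so the floor-function exponents $\lfloor(r+1)/2\rfloor$ and $\lfloor r/2\rfloor$ in the definition of $d_{u,i}$ precisely count how many factors of $c_{u,i}$ versus $c_{u,\tilde{\sigma}(i)}$ are picked up after $r$ iterations of the $\sigma$-linear $F$.

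The main obstacle I anticipate is the careful bookkeeping in the supersingular (block-antidiagonal) case: one must verify that the combination of $\sigma$-twists in the definition of $d_{u,i}$ really corresponds to the trace and norm of $F^r$ on the two-dimensional CM-swap subspace spanned by $(f_{u,i},f_{u,k-2-i})$, including parity considerations for $r$ and the eventual appearance of $\Tr(F^m)$ with $m=r(k-2-2i)$. A second, essentially routine, subtlety in the ordinary case is that Mazur's theorem gives \emph{a priori} only $F(F^1_{\Hdg})\subseteq pH^1_{dR}$; upgrading this to the equality $\ord_p(c_{u,i})=k-2-i$ requires the projection $F^1_{\Hdg}\to H^1_{dR}/U$ to be an isomorphism, which is exactly the ordinariness hypothesis on $E$.
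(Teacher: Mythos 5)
Your ordinary-case argument is essentially the paper's mechanism in different packaging: where the paper reads off the eigenvalues of the linear map $F^r$ from Lemma~\ref{lem:ladicChar} and then pins down valuations using the a priori bound $\ord_p(c_{u,i})\geq k-2-i$ from Lemma~\ref{lem:indDecomp} together with Mazur's theorem, you compute the crystalline Frobenius directly on the CM eigenlines via the splitting $H^1_{dR}=F^1_{\Hdg}\oplus U$ into the Hodge line and the unit-root subspace. That is a legitimate alternative and does give $\ord_p(c_{u,i})=k-2-i$ cleanly. One caveat: your claim that $c_{u,i}\in\Z_p$, so that ``the $\sigma$-twists collapse and $d_{u,i}=c_{u,i}^r$,'' is not justified. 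The $f_{u,i}$ are normalized only up to $K^\times$-scalars, so $c_{u,i}$ is well defined only up to factors $\sigma(\lambda)/\lambda$ and in general lies in $K_p$, not $\Q_p$. The canonical quantity is the eigenvalue of the \emph{linear} operator $F^r$, namely $p^r\,\sigma^{r-1}(c_{u,i})\,\sigma^{r-2}(c_{u,\tilde{\sigma}(i)})\cdots c_{u,\tilde{\sigma}^{r-1}(i)}$ obtained by iterating Lemma~\ref{lem:indDecomp}, and it is this product that must be matched against $p^r\alpha^i\beta^{k-2-i}$, where $\alpha,\beta$ are the Frobenius eigenvalues of $E^{\univ}_{\kappa(u)}$ over $\F_{p^r}$; this is exactly how the paper argues, and it avoids any statement about $c_{u,i}$ itself being $\sigma$-invariant.

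The genuine gap is the supersingular case, which you explicitly leave as ``the main obstacle'': you set up a trace/norm analysis of $F^r$ on the two-dimensional span of $f_{u,i}$ and $f_{u,k-2-i}$ but do not carry it out. The missing idea --- and the entirety of the paper's treatment of this case --- is that $p$ inert in the CM field, together with the CM field embedding into the unramified extension $K_p$, forces $r$ to be even; hence $\tilde{\sigma}^r=\id$ and each $f_{u,i}$ is again an eigenvector of the linear map $F^r$, so the identical one-line comparison of $F^r$-eigenvalues used in the ordinary case applies, with no two-dimensional block analysis, parity casework, or separate appearance of traces of $F^m$ needed. (The even parity of $r$ is also what makes the exponents $\lfloor (r+1)/2\rfloor=\lfloor r/2\rfloor=r/2$ count the factors of $c_{u,i}$ and $c_{u,\tilde{\sigma}(i)}$ correctly in $d_{u,i}$.) Until you supply this observation, the case $\tilde{\sigma}(i)=k-2-i$ of \eqref{eq:genEigenFormula} remains unproved in your write-up.
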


\begin{proof}
By Lemma~\ref{lem:ladicChar}, we have that $F^r$ is a linear map with eigenvalues $\{\alpha^i\beta^{k-2-i} | 0\leq i\leq k-2\}$ such that 
$$
\alpha + \beta = p^r+1-\#E^{\univ}_{\kappa(u)}(\F_{p^r})\quad\mathrm{and}\quad\alpha\cdot\beta=p^r.
$$
Moreover, if $E^{\univ}_{\kappa(u)}$ has ordinary reduction then $\tilde{\sigma}(i)=i$ whereas if it has supersingular reduction, then $r$ is even, and $\tilde{\sigma}^r(i)=i.$ Therefore, in both cases, $f_{u,i}$ is an eigenvector of $F^r.$ By repeated applications of Lemma~\ref{lem:indDecomp} and noting that $F^r$ is linear, we have (\ref{eq:genEigenFormula}). If $E^{\univ}_{\kappa(u)}$ has ordinary reduction, then we have $\ord_p(d_{u,i})=r\ord_p(c_{u,i}).$ Moreover, Theorem 1 of \cite{MazurFrobHodgeFil} tells us that either $\alpha$ or $\beta$ is a $p$-adic unit, and since $\ord_p(c_{u,i})\geq k-2-i,$ we have our claim.
\end{proof}

\begin{remark}\label{rem:highDim}
    Note that if $\dim S_k(X)>0,$ then $\O$ acts on the space $\H^1(X,\Omega^\bullet)\backslash\H^1(X,\Omega_u^\bullet)$ instead of $\H^1(X,\Omega^\bullet_u).$ Therefore, in general, we cannot expect that an analogue of Corollary~\ref{cor:ASD-basis} exists when $\dim S_k(X)>0.$ On the other hand, in some situations, one has a global action on $\H^1(X,\Omega^\bullet_u).$ For instance, let $u=\frac{1}{2}$ and consider the situation of Theorem~\ref{thm:Gamma1(4)Basis}. In this case, the Atkin--Lehner involution $W_4,$ which acts by $\lambda\to 1-\lambda$ induces a global action on $\H^1(X,\Omega^\bullet_u).$  
    The same arguments show that  while $S_5(X_1(4))$ is one-dimensional,  the coefficients of the weight-5 meromorphic form 
    $$
    f(\tau)=\frac{1}{2\pi i}\frac{\theta^6\l'(\tau)}{(1-2\l(\tau))}=:\sum\limits_{n\geq 1} a_f(n)q^n
    $$ 
    satisfy
    $$
    a_f(mp^s)\equiv \mu_p^3 a_f(mp^{s-1})\pmod {p^{4s}}
    $$
    for all $p\equiv 1\pmod 4$ and $m,s\ge 1,$ where $\mu_p$ is a $p$-adic unit root.
   
\end{remark}

\section{Explicit Residue computations}\label{sec:ERC}

In order to deduce Theorems~\ref{thm:Gamma1(4)Basis} and \ref{thm:SL2(Z)Basis} from Corollary~\ref{cor:ASD-basis}, we require an explicit computation of the residue map in order to diagonalize with respect to the CM structure. For this calculation, it suffices to work analytically (see Sections 5.2 and 5.3 of \cite{FonsecaBrown}). 

To this end, we recall a result of Brown and Fonseca which computes $\Res$ in the Betti frame.

\begin{lemma}\label{lem:FBResidue}\cite[Lemma 5.10]{FonsecaBrown}
    Let $\Gamma$ be a finite-index subgroup of $\SL_2(\Z)$ and $X$ the corresponding modular curve $\Gamma\backslash\overline{\mathfrak{H}}.$ Moreover, let $f\in M_k(X,\star u)^{\an}$ and denote by $\sum\limits_{n\gg-\infty}c_n(f;w)(\tau-w)^n$ the Laurent expansion of $f$ around some $w\in\mathfrak{H}$ that lies above $u.$ Then, we have that
    $$
    \Res_{u}(f) = \sum\limits_{j=0}^{k-2}(2\pi i )^j\cdot \binom{k-2}{j}c_{-j-1}(f;w)\left(\frac{dx}{y}\right)^{k-2-j}\left(\frac{xdx}{y}-\frac{E_2(w)}{12}\cdot\frac{dx}{y}\right)^j,
    $$
    where $\frac{dx}{y},\frac{xdx}{y}$ are the basis in the de Rham cohomology $H^1_{dR}(E_u/\C)$ of the elliptic curve
    $$
    E_u:\ \ \ \ y^2=4x^3-\frac{E_4(w)}{12}x+\frac{E_6(w)}{216},
    $$
    and where $E_n(\tau)$ denotes the normalized Eisenstein series of weight $n$ for $\text{SL}_2(\Z)$.
\end{lemma}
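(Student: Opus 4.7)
The strategy is to compute $\Res_u$ analytically. By Lemma~\ref{lem:hypercohomology-mdular-comparison} and the short exact sequence (\ref{eq:sesAlgLog}), a class $[f]\in\H^1(X,\Omega_u^\bullet\otimes\C)$ is represented near a lift $w\in\mathfrak{H}$ of $u$ by a local section of $\mathcal{E}_{k-2}^{\an}\otimes\Omega_X^1(\log u)$, and $\Res_u$ extracts the coefficient of $d\tau/(\tau-w)$ in $\Sym^{k-2}H^1_{dR}(E_u)$.  Via the Kodaira--Spencer isomorphism $\boldsymbol{\omega}^{\otimes 2}\cong\Omega_X^1$, the weight-$k$ meromorphic modular form $f$ corresponds locally to $\phi_f=f(\tau)\,\omega(\tau)^{k-2}\otimes(2\pi i\, d\tau)$, where $\omega(\tau)=2\pi i\, d\zeta$ denotes the canonical holomorphic relative differential on the universal elliptic curve.

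The main input is the Taylor expansion of $\omega(\tau)^{k-2}$ around $w$ in the algebraic basis $\omega_w:=dx/y$, $\eta_w:=xdx/y$ of $H^1_{dR}(E_w)$ attached to the Weierstrass model $y^2=4x^3-\tfrac{E_4(w)}{12}x+\tfrac{E_6(w)}{216}$. This requires writing down the Gauss--Manin connection of the universal elliptic curve in this algebraic frame.  A direct computation using Ramanujan's identities $\tfrac{1}{2\pi i}E_2'=\tfrac{1}{12}(E_2^2-E_4)$ and the analogous formulas for $E_4',E_6'$ shows that $\nabla_{\partial_\tau}\omega=2\pi i\bigl(\eta-\tfrac{E_2}{12}\omega\bigr)$, so that the holomorphic generator of $F^1\mathcal{E}_\tau$, expressed in the parallel transport of the basis at $w$, has first-order expansion $\omega(\tau)=\omega_w+2\pi i(\tau-w)\bigl(\eta_w-\tfrac{E_2(w)}{12}\omega_w\bigr)+O((\tau-w)^2)$.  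The appearance of the combination $\eta-\tfrac{E_2}{12}\omega$ is the well-known ``quasi-modular shift'' reflecting the fact that $E_2$ corrects $\eta$ into a frame transforming covariantly under the modular group.

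Expanding $\omega(\tau)^{k-2}$ by the binomial theorem in $\Sym^{k-2}\mathcal{E}^{\an}$ produces $\sum_{j=0}^{k-2}\binom{k-2}{j}(2\pi i)^j(\tau-w)^j\omega_w^{k-2-j}\bigl(\eta_w-\tfrac{E_2(w)}{12}\omega_w\bigr)^j$ modulo higher-order terms in $(\tau-w)$ that do not contribute to the residue. Multiplying by the Laurent series $f(\tau)=\sum_n c_n(f;w)(\tau-w)^n$ and by the factor $2\pi i\, d\tau$ in $\phi_f$, the residue at $w$ is the $(\tau-w)^{-1}$-coefficient, which pairs $c_{-j-1}(f;w)$ with the $j$-th term of the expansion and yields exactly the stated formula. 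The main obstacle is the careful bookkeeping of the $2\pi i$ factors arising from three independent sources (the normalization $\omega=2\pi i\, d\zeta$, the Kodaira--Spencer isomorphism $\boldsymbol{\omega}^{\otimes 2}\cong\Omega_X^1$, and the analytic residue), together with a mild subtlety at elliptic stabilizer points (where the local uniformizer at $u$ is $(\tau-w)^e$ rather than $\tau-w$) which is resolved by passing to an \'etale cover. Once the Gauss--Manin matrix in the $(dx/y,\,xdx/y)$-frame is written down explicitly, the rest of the argument is a direct binomial expansion and residue extraction.
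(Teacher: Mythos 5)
The paper itself offers no proof of this lemma---it is imported wholesale from \cite[Lemma 5.10]{FonsecaBrown}---so your argument can only be judged on its own terms. Your overall route is the natural one and matches what such a proof must do: represent the class of $f$ near $w$ by $f(\tau)\,\omega(\tau)^{k-2}\otimes d\tau$ via Kodaira--Spencer, expand it in a $\nabla$-flat frame of $\Sym^{k-2}\mathcal{E}^{\an}$, and read off the coefficient of $d\tau/(\tau-w)$ as an element of $\Sym^{k-2}H^1_{dR}(E_u)$; and your Gauss--Manin input $\nabla_{\partial_\tau}\omega=2\pi i\bigl(\eta-\tfrac{E_2}{12}\omega\bigr)$ in the frame $\omega=\tfrac{dx}{y}$, $\eta=\tfrac{xdx}{y}$ of the stated Weierstrass model is correct.

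The genuine gap is the step where you discard the ``higher-order terms in $(\tau-w)$ that do not contribute to the residue.'' They would contribute: since $f$ may have a pole of any order (and the asserted formula itself involves $c_{-j-1}$ for all $j\le k-2$), the residue pairs $c_{-j-1}(f;w)$ with the \emph{full} $(\tau-w)^j$-coefficient of $\omega(\tau)^{k-2}$ in the flat frame, and for $j\ge 2$ that coefficient a priori involves $\nabla_{\partial_\tau}^2\omega$ and higher derivatives, which your first-order jet does not control; as written, your computation only covers poles of order at most $2$, whereas the paper needs the lemma up to pole order $k-1$ (e.g.\ Lemma~\ref{lem:Resf43} and the $r=k-2$ cases of Theorem~\ref{thm:SL2(Z)Basis}). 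The missing idea is that $\eta-\tfrac{E_2}{12}\omega$ is \emph{horizontal}: the second row of the Gauss--Manin matrix, $\tfrac{1}{2\pi i}\nabla_{\partial_\tau}\eta=\tfrac{E_2}{12}\eta-\tfrac{E_4}{144}\omega$, combined with Ramanujan's identity $\tfrac{1}{2\pi i}E_2'=\tfrac{E_2^2-E_4}{12}$, gives $\nabla_{\partial_\tau}\bigl(\eta-\tfrac{E_2}{12}\omega\bigr)=0$; analytically, in the frame dual to the homology basis $\omega(\tau)$ has coordinates $2\pi i(1,\tau)$ while $\eta-\tfrac{E_2}{12}\omega$ has constant coordinates $(0,1)$ by the Legendre relation between periods and quasi-periods. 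Hence $\omega(\tau)=\omega_w+2\pi i(\tau-w)\bigl(\eta_w-\tfrac{E_2(w)}{12}\omega_w\bigr)$ holds \emph{exactly}, the binomial expansion of $\omega(\tau)^{k-2}$ terminates at degree $k-2$ with no error term, and your residue extraction then yields the stated formula; this is also precisely why no Laurent coefficients $c_{-j-1}$ with $j>k-2$ appear. Finally, you should settle rather than merely flag the normalization: with your $\phi_f=f\,\omega^{k-2}\otimes 2\pi i\,d\tau$ the computation produces an extra overall factor of $2\pi i$ relative to the stated formula, so the lemma's $\Res_u$ must be the bare $d\tau/(\tau-w)$-coefficient, consistent with the $\frac{1}{2\pi i}$ prefactors carried by the forms in the paper's applications (e.g.\ Lemma~\ref{lem:Wt3Gamma1(4)Res}).
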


\begin{remark}
    For finite index subgroup $\G$ of $\text{SL}_2(\Z),$ the space of quasi-modular forms for $\G$ is a finitely generated differential algebra closed under $\partial_{\tau}$ (see   \cite{zagier2000modular}). When $X_\G$ has genus 0 with a fixed choice of Hauptmodul $t$, expressing $E_4(w)$ and $E_6(w)$ using modular forms for $\G$ allows us to express $E_u$ in the previous result as a one-parameter family of elliptic curves over $\overline \Q(t)$. See \cite[Appendix]{HMM1} for a list of Hauptmoduln for $\G$ associated to arithmetic triangle groups, such as $\SL_2(\Z)$ and $\G_1(4)$, which have  only three cusps and elliptic points, as well as the logarithmic derivatives of these Hauptmoduln. 
\end{remark}

\subsection{\texorpdfstring{The case of $\Gamma_1(4)$}{The case of Gamma1(4)}}\label{subsec:ERCGamma1(4)}

In order to prove Theorem~\ref{thm:Gamma1(4)Basis}, we consider modular forms on $\Gamma_1(4)$ as $\Gamma_1(4)\backslash\Gamma_1(8)$-invariant modular forms on $X_1(8)$ (for example, see Section 6 of \cite{KS16}). Lemma~\ref{lem:FBResidue} allows us to compute the residue of the modular forms in Theorem~\ref{thm:Gamma1(4)Basis}. On the other hand, we can express $E_4(\tau), E_6(\tau),$ and the Laurent coefficients explicitly in terms of the values of the modular forms $\theta^2(\tau)$ and $\lambda(\tau)$ defined in Example~\ref{ex:ex1}. This is obtained by working on the differential algebra 
$\C[\theta^2(\tau),\lambda(\tau), E_2(\tau)]$\footnote{For our purposes, this is the appropriate analogue of $\C[E_2(\tau),E_4(\tau),E_6(\tau),j(\tau)]\cong\bigoplus\limits_{n\geq 2}M_n^{\text{quasi},!}(\SL_2(\Z),\C).$} where the differential operator is $\frac{1}{2\pi i}\frac{d}{d\tau}.$ Putting this together, we obtain an algebraic description of $\Res$ for a model of a ``universal'' elliptic curve.

We first compute $E_4(\tau)$ and $E_6(\tau)$ in terms of $\theta^2(\tau)$ and $\lambda(\tau).$ 

\begin{lemma}\label{lem:E4E6Gamma1(4)}
    If $\tau\in\mathfrak{H},$ then the following are true.
    \begin{enumerate}
        \item $$
        E_4(\tau)=(1+14\lambda(\tau)+\lambda(\tau)^2)\cdot \theta^8(\tau).
        $$
        \item $$
        E_6(\tau)=(1-33\lambda(\tau)-33\lambda(\tau)^2+\lambda(\tau)^3)\cdot\theta^{12}(\tau).
        $$
    \end{enumerate}
\end{lemma}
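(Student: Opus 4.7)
The plan is to verify each identity as an equality in a small-dimensional space of holomorphic modular forms on $\Gamma_1(4)$ by matching a bounded number of $q$-expansion coefficients.

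For (1), both sides are holomorphic weight-$4$ modular forms on $\Gamma_1(4)$: the left-hand side $E_4$ is modular for $\SL_2(\Z)$, hence for $\Gamma_1(4)$; on the right-hand side, $\theta^2$ is a weight-$1$ form on $\Gamma_1(4)$ (with its character), so $\theta^8$ has weight $4$ with trivial character and lies in $M_4(\Gamma_0(4))$, while $\lambda$ is the Hauptmodul of $X_0(4) = X_1(4)$. Hence $(1+14\lambda+\lambda^2)\theta^8$ is a modular form of weight $4$ on $\Gamma_0(4)$; holomorphicity at the three cusps is checked from the standard zero/pole orders of $\lambda$ together with the complementary vanishing of $\theta^8$.

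Now $\dim M_4(\Gamma_0(4)) = 3$ (from $g=0$, three cusps, no elliptic points), and the forms $\theta^8, \lambda\theta^8, \lambda^2\theta^8$ have distinct leading $q$-orders $0,1,2$ at $\underline{\infty}$ (using $\theta^8 = 1 + 16q + \cdots$ and $\lambda = 16q + 128q^2 + \cdots$), so they form a basis. Writing $E_4 = a_0\theta^8 + a_1\lambda\theta^8 + a_2\lambda^2\theta^8$ and comparing the $q^0, q^1, q^2$ coefficients of both sides (with $E_4 = 1 + 240q + 2160q^2 + \cdots$) yields a $3\times 3$ linear system with unique solution $(a_0,a_1,a_2) = (1,14,1)$.

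Claim (2) follows by the identical strategy: both sides lie in $M_6(\Gamma_0(4))$, which has dimension $4$ with basis $\{\lambda^i \theta^{12} : 0 \leq i \leq 3\}$, and matching the coefficients of $q^0, q^1, q^2, q^3$ in $E_6 = 1 - 504q - 16632 q^2 - \cdots$ against the expansion of the right-hand side uniquely determines the coefficients $(1,-33,-33,1)$. Conceptually, the argument rests on the classical fact that $\bigoplus_{k\geq 0} M_{2k}(\Gamma_0(4)) = \C[\theta^4, \lambda\theta^4]$ as a graded ring, so any even-weight modular form on $\Gamma_0(4)$ is a polynomial in $\lambda$ times the appropriate power of $\theta^4$. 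The only ``hard'' step is the routine bookkeeping of power-series coefficients; there is no conceptual obstruction.
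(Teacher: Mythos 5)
Your proof is correct and takes essentially the same route as the paper: both sides are holomorphic modular forms of the stated weight on $\Gamma_1(4)$ (equivalently $\Gamma_0(4)$ in even weight), so the identity reduces to matching the first few $q$-expansion coefficients --- the paper justifies the finite check via Sturm's bound, while you justify it via $\dim M_4=3$, $\dim M_6=4$ and the triangular basis $\{\lambda^i\theta^{8}\}$, $\{\lambda^i\theta^{12}\}$, which amounts to the same thing. One harmless slip: $\lambda(\tau)=16q-128q^2+\cdots$ (it is $\lambda\theta^8$ whose expansion begins $16q+128q^2+\cdots$), but since only the leading order of $\lambda$ enters your argument, nothing is affected.
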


\begin{proof}
    This follows immediately from computing the first few Fourier coefficients and Sturm's bound (see for example Theorem 5.6.11 and Definition 5.6.13 of \cite{Cohen-Stromberg}).
\end{proof}

\subsubsection{Weight $3$ Computations}

For weight $3,$ we require the residues of $f(\tau):=\frac{\theta^2(\tau)}{(\lambda(\tau)-u)}\cdot \lambda'(\tau)$ and $g(\tau):=\frac{\theta^2(\tau)}{(\lambda(\tau)-u)^2}\cdot \lambda'(\tau),$ where $u=\lambda(w)\in\C$ and where $t'$ denotes $\frac{dt}{d\tau}$ for a function $t$ on $\mathfrak{H}.$ The required Laurent coefficients of $f(\tau)$ are as follows.

\begin{lemma}\label{lem:Resf31}
    If $w\in\mathfrak{H}$ and $f(\tau)=\sum\limits_{n\geq -1}c_n(f; w)(\tau-w)^n,$ then we have
    $$
    c_{-1}(f;w)=\theta^2(w).
    $$
\end{lemma}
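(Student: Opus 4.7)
The plan is to compute the residue directly by expanding $\lambda(\tau)-u$ around $\tau=w$. Since $u=\lambda(w)$ and $w$ lies above a point of $Y_1(4)$ where $\lambda$ is \'etale (the group $\Gamma_1(4)$ has no elliptic points on $\mathfrak{H}$, so $\lambda$ is a local coordinate at every $w\in\mathfrak{H}$), we have $\lambda'(w)\neq 0$. Writing the Taylor expansion
\[
\lambda(\tau)-u=\lambda'(w)(\tau-w)+O\bigl((\tau-w)^2\bigr),\qquad \lambda'(\tau)=\lambda'(w)+O(\tau-w),
\]
dividing gives
\[
\frac{\lambda'(\tau)}{\lambda(\tau)-u}=\frac{1}{\tau-w}+O(1),
\]
so the logarithmic derivative has a simple pole at $w$ with residue $1$.

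Since $\theta^2(\tau)$ is holomorphic on $\mathfrak{H}$, multiplying gives
\[
f(\tau)=\theta^2(\tau)\cdot\frac{\lambda'(\tau)}{\lambda(\tau)-u}=\frac{\theta^2(w)}{\tau-w}+O(1),
\]
from which $c_{-1}(f;w)=\theta^2(w)$ as claimed.

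The argument is essentially a one-line Taylor expansion; the only point requiring care is ensuring that $\lambda'(w)\neq 0$ so that $\lambda(\tau)-u$ actually has a simple (rather than higher order) zero at $w$. This is automatic here because $\Gamma_1(4)$ acts freely on $\mathfrak{H}$, but it is worth noting explicitly since it is implicitly using that $u\in Y(K)$ lifts to a non-elliptic point of $\mathfrak{H}$. No serious obstacle arises in this lemma.
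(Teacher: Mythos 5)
Your proof is correct and is essentially the same computation as the paper's: the paper evaluates $c_{-1}(f;w)=\lim_{\tau\to w}(\tau-w)f(\tau)$, which is exactly your Taylor-expansion argument with the $\lambda'(w)$ factors cancelling. Your explicit remark that $\lambda'(w)\neq 0$ (no elliptic points for $\Gamma_1(4)$) makes precise a point the paper leaves implicit, but the route is the same.
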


\begin{proof}
    We have 
    $$
    c_{-1}(f;w)=\lim\limits_{\tau\to w}(\tau-w)\cdot f(\tau)=\lim\limits_{\tau\to w}\frac{\theta^2(\tau)}{(\lambda(\tau)-u)/(\tau-w)}\cdot \lambda'(\tau)=\theta^2(w).
    $$
\end{proof}

In order to compute $c_{-1}(g;w)$ and $c_{-2}(g;w),$ we make use of the following intermediate lemma.

\begin{lemma}\label{lem:Theta2Der}
    The following are true.
    \begin{enumerate}
    \item 
    $$
  \frac{1}{2\pi i}\frac{d\lambda}{d\tau}=\lambda(\tau)(1-\lambda(\tau))\theta^4(\tau).
    $$
    \item If $s\geq 1,$ then we have 
    $$
    \frac{1}{2\pi i}\frac{d\theta^{2s}(\tau)}{d\tau}=\frac{(5\lambda(\tau)-1)s}{12}\theta^{2s+4}(\tau)+\frac{s}{12}\theta^{2s}(\tau)\cdot E_2(\tau).
    $$
    \end{enumerate}
\end{lemma}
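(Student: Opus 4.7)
The strategy is to recognize both identities as equalities of (quasi-)modular forms of fixed weight on $\Gamma_1(4)$ and then verify them by comparing $q$-expansions up to a Sturm-type bound, exactly as in the proof of Lemma~\ref{lem:E4E6Gamma1(4)}.

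For part (1), I would first note that since $\lambda$ is a modular function of weight $0$ on $\Gamma_1(4)$, the operator $\partial=\frac{1}{2\pi i}\frac{d}{d\tau}$ sends $\lambda$ to a genuine weight-$2$ modular form (no $E_2$-correction is needed, because weight $0$ kills the anomalous term). The right-hand side $\lambda(1-\lambda)\theta^{4}$ is likewise weight $2$ on $\Gamma_1(4)$, being the product of the weight-$0$ function $\lambda(1-\lambda)$ with $\theta^{4}\in M_2(\Gamma_1(4))$; one verifies that the zero of $\lambda(1-\lambda)$ at the cusp where $\lambda$ has its pole cancels this pole, so the product is holomorphic at every cusp. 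Since both sides lie in the finite-dimensional space $M_2(\Gamma_1(4))$, an application of Sturm's bound reduces the identity to matching the first few terms of the $q$-expansion, which can be done directly from the product formulae defining $\theta$ and $\lambda$.

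For part (2), I would argue by induction on $s\geq 1$. For the inductive step, assuming the formula at level $s$ and writing $\theta^{2(s+1)}=\theta^{2s}\cdot\theta^{2}$, the Leibniz rule gives
\begin{align*}
\partial\theta^{2(s+1)}
&=\theta^2\,\partial\theta^{2s}+\theta^{2s}\,\partial\theta^2 \\
&=\theta^2\!\left[\tfrac{(5\lambda-1)s}{12}\theta^{2s+4}+\tfrac{s}{12}E_2\theta^{2s}\right]+\theta^{2s}\!\left[\tfrac{5\lambda-1}{12}\theta^{6}+\tfrac{1}{12}E_2\theta^{2}\right] \\
&=\tfrac{(5\lambda-1)(s+1)}{12}\theta^{2(s+1)+4}+\tfrac{s+1}{12}E_2\theta^{2(s+1)}.
\end{align*}
Thus everything reduces to the base case $s=1$, namely $\partial\theta^{2}-\tfrac{1}{12}E_2\theta^{2}=\tfrac{5\lambda-1}{12}\theta^{6}$. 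The left-hand side is the Ramanujan--Serre derivative of $\theta^{2}$ (a weight-$1$ form on $\Gamma_1(4)$), hence a genuine weight-$3$ modular form on $\Gamma_1(4)$. The right-hand side is a weight-$3$ modular form as well, once one checks that the zero of $\theta^{6}$ at the cusp where $\lambda$ has its pole is of sufficient order to cancel it. With both sides in the finite-dimensional space $M_3(\Gamma_1(4))$, a Sturm bound finishes the job.

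The only real obstacle is routine bookkeeping: confirming that the candidate right-hand sides are holomorphic at all three cusps of $\Gamma_1(4)$ (so that the equations actually take place in the holomorphic modular forms spaces), and then computing enough coefficients of $\theta$, $\lambda$, and $E_2$ to satisfy Sturm's bound. Since the dimensions of $M_2(\Gamma_1(4))$ and $M_3(\Gamma_1(4))$ are small, only a handful of $q$-coefficients need to be compared, so no serious computation is required.
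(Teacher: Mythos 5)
Your part (2) is fine and is essentially the paper's own argument: the paper likewise observes that the Serre derivative $\frac{1}{2\pi i}\frac{d\theta^2}{d\tau}-\frac{1}{12}E_2\theta^2$ is a holomorphic weight-$3$ form on $\Gamma_1(4)$, settles $s=1$ by comparing a few coefficients via Sturm's bound, and disposes of $s>1$ by the chain rule, which is exactly your Leibniz induction. (Your bookkeeping claim there is true: $(5\lambda-1)\theta^6$ really is holomorphic at the cusp where $\lambda$ blows up, because $\theta^6$ vanishes there to order $3/2$ while $\lambda$ has only a simple pole.) For part (1) the paper just cites Lemma~2.1 of \cite{LLT}, so a self-contained proof would be a genuine addition --- but yours has a gap.

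The problem is the assertion that $\lambda(1-\lambda)\theta^4$ is ``holomorphic at every cusp,'' so that both sides lie in $M_2(\Gamma_1(4))$. First, $\lambda(1-\lambda)$ has a \emph{pole}, not a zero, at the cusp where $\lambda=\infty$ (the cusp $1/2$): $\lambda$ is a Hauptmodul, hence has a simple pole there in the local parameter, so $\lambda(1-\lambda)\sim-\lambda^2$ has a double pole. On the other hand $\theta^4$ is nonvanishing on $\mathfrak{H}$ and at the cusps $\infty$ and $0$, and by the valence formula (weight $2$, index $6$) its total vanishing order is $1$, so it has only a \emph{simple} zero at $1/2$. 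Thus $\lambda(1-\lambda)\theta^4$ has a simple pole at that cusp; and so does the left-hand side, since if $\lambda\circ A=c_{-1}q_c^{-1}+c_0+\dots$ at that cusp, then the weight-$2$ slash of $\lambda'$ there is $\frac{1}{2\pi i}\frac{d}{d\tau}(\lambda\circ A)$, which again has a $q_c^{-1}$ term. So neither side is in $M_2(\Gamma_1(4))$, and Sturm's bound for holomorphic forms cannot be invoked as you state it. The identity is of course still true and the argument is easily repaired: set $D:=\frac{1}{2\pi i}\lambda'-\lambda(1-\lambda)\theta^4$, note that $D$ is a weight-$2$ form holomorphic on $\mathfrak{H}$ and at the cusps $\infty$ and $0$ with at most a simple pole at $1/2$, so that $D\cdot\theta^4\in M_4(\Gamma_1(4))$; then a Sturm bound in weight $4$ (a couple of coefficients) forces $D\theta^4=0$, hence $D=0$. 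Alternatively one can use a Sturm-type bound adapted to weakly holomorphic forms with prescribed pole order at the cusps, but some such adjustment is needed --- as written, the step ``both sides lie in the finite-dimensional space $M_2(\Gamma_1(4))$'' fails.
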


\begin{proof}
    (1) is Lemma 2.1 of \cite{LLT}. To prove (2), note that the Serre derivative of $\theta^2(\tau)$
    $$
    \frac{1}{2\pi i}\frac{d\theta^2(\tau)}{d\tau} - \frac{1}{12}\theta^2(\tau)\cdot E_2(\tau)
    $$
    is a holomorphic modular form on $\Gamma_1(4)$ (see Proposition 5.3.6 (b) of \cite{Cohen-Stromberg}). The claim for $s=1$ then follows from computing the first few Fourier coefficients and Sturm's bound. The claim for $s>1$ is just the chain rule.
\end{proof}

We now compute the required Laurent coefficients of $g(\tau)=\frac{\theta^2(\tau)}{(\lambda(\tau)-u)^2}\cdot \lambda'(\tau).$

\begin{lemma}\label{lem:Resf32}
    If $w\in\mathfrak{H}$ and $g(\tau)=\sum\limits_{n\geq -2}c_n(g;w)(\tau-w)^n,$ then we have
    $$
    c_{-1}(g;w) = \frac{(5u-1)\theta^2(w)}{12u(1-u)}+\frac{E_2(w)}{12u(1-u)\theta^2(w)}
    $$
    and
    $$
    c_{-2}(g;w)=\frac{1}{2\pi i}\cdot \frac{1}{u(1-u)\theta^2(w)}.
    $$
\end{lemma}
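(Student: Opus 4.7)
The plan is to compute the Laurent expansion of $g(\tau) = \theta^2(\tau)\lambda'(\tau)/(\lambda(\tau)-u)^2$ directly at $\tau=w$. Since $u \neq 0,1$ and Lemma~\ref{lem:Theta2Der}(1) gives $\lambda'(w) = 2\pi i \cdot u(1-u)\theta^4(w) \neq 0$, the function $\lambda(\tau)-u$ has a simple zero at $\tau=w$, so $g$ has a pole of order exactly $2$ there. I will Taylor-expand $h(\tau) := \theta^2(\tau)\lambda'(\tau)$ and $(\lambda(\tau)-u)^{-2}$ separately and multiply, extracting $c_{-2}(g;w)$ and $c_{-1}(g;w)$.

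Concretely, writing
$$\lambda(\tau) - u = \lambda'(w)(\tau-w)\left[1 + \tfrac{\lambda''(w)}{2\lambda'(w)}(\tau-w) + O((\tau-w)^2)\right],$$
one gets
$$(\lambda(\tau)-u)^{-2} = \frac{1}{\lambda'(w)^2(\tau-w)^2}\left[1 - \tfrac{\lambda''(w)}{\lambda'(w)}(\tau-w) + O((\tau-w)^2)\right].$$
Multiplying this by $h(\tau) = h(w) + h'(w)(\tau-w) + O((\tau-w)^2)$, the $c_{-2}$ term is immediate as $h(w)/\lambda'(w)^2 = \theta^2(w)/\lambda'(w)$. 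In the coefficient of $(\tau-w)^{-1}$, the two $\lambda''(w)$ contributions cancel after one uses the product rule $h'(w) = (\theta^2)'(w)\lambda'(w) + \theta^2(w)\lambda''(w)$, leaving the surprisingly clean formulas
$$c_{-2}(g;w) = \frac{\theta^2(w)}{\lambda'(w)},\qquad c_{-1}(g;w) = \frac{(\theta^2)'(w)}{\lambda'(w)}.$$

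To finish, I substitute the explicit values from Lemma~\ref{lem:Theta2Der}: part (1) yields $\lambda'(w) = 2\pi i\cdot u(1-u)\theta^4(w)$, and part (2) with $s=1$ gives
$$(\theta^2)'(w) = \frac{2\pi i}{12}\bigl[(5u-1)\theta^6(w) + \theta^2(w)E_2(w)\bigr].$$
Plugging in and simplifying immediately produces the two identities stated in the lemma. The computation is essentially routine bookkeeping; the only step that requires any care is the cancellation of $\lambda''(w)$ in the coefficient of $(\tau-w)^{-1}$, which ensures no second-derivative information about $\lambda$ is needed in the final answer.
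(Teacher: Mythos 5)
Your proposal is correct and follows essentially the same route as the paper: expand $k(\tau)=\theta^2(\tau)\lambda'(\tau)$ against the expansion of $(\lambda(\tau)-u)^{-2}$, observe the cancellation of the $\lambda''(w)$ terms so that $c_{-2}=\theta^2(w)/\lambda'(w)$ and $c_{-1}=(\theta^2)'(w)/\lambda'(w)$, and then substitute parts (1) and (2) of Lemma~\ref{lem:Theta2Der}. No gaps.
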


\begin{proof}
    We first compute $c_{-2}(g;w).$ We have
    $$
    c_{-2}(g;w)=\lim\limits_{\tau\to w}\frac{\theta^2(\tau)}{(\lambda(\tau)-u)^2/(\tau-w)^2}\cdot\lambda'(\tau)=\frac{\theta^2(w)}{\lambda'(w)}.
    $$
    The claim then follows from (1) of Lemma~\ref{lem:Theta2Der}. 
    
    To prove (2), write $k(\tau)=\theta^2(\tau)\cdot\frac{d\lambda}{d\tau}.$ We have
    \begin{align*}
        g(\tau)&=\frac{k(\tau)}{(\lambda(\tau)-u)^{2}} = \left[\sum\limits_{n\geq 0}c_n(k;w)(\tau - w)^n\right]\cdot\frac{1}{\left[\sum\limits_{n\geq 1}c_n(\lambda;w)(\tau-w)^n\right]^2} \\
        &=\left[\sum\limits_{n\geq 0}c_n(k;w)(\tau - w)^n\right]\cdot\frac{1}{c_1(\lambda;w)^2}\cdot\frac{1}{(\tau-w)^2}\cdot\left[\frac{1}{1+\sum\limits_{n\geq 2}\frac{c_n(\lambda;w)}{c_1(\lambda;w)}\cdot(\tau-w)^{n-1}}\right]^2 \\
        &=\left[\sum\limits_{n\geq 0}c_n(k;w)(\tau - w)^n\right]\cdot\frac{1}{c_n(\lambda;w)^2}\cdot\frac{1}{(\tau-w)^2} \\
        &\ \ \cdot\left[1-2\sum\limits_{n\geq 2}\frac{c_n(\lambda;w)}{c_1(\lambda;w)}(\tau-w)^{n-1}+O((\tau-w)^2)\right].
    \end{align*}
    Therefore, we have
    $$
    c_{-1}(g;w)=\frac{1}{c_1(\lambda;w)^2}\left[-2c_0(k;w)\cdot\frac{c_2(\lambda;w)}{c_1(\lambda;w)}+c_1(k;w) \right].
    $$
    Now, we have $c_0(k;w)=k(w)=\theta^2(w)\cdot\lambda'(w).$ Moreover, we have
    $$
    \frac{c_2(\lambda;w)}{c_1(\lambda;w)}=\frac{1}{2}\cdot\frac{\lambda''(w)}{\lambda'(w)}
    $$ 
    and therefore
    $$
    -2c_0(k;w)\frac{c_2(\lambda;w)}{c_1(\lambda;w)}=-\theta^2(w)\lambda''(w).
    $$
    On the other hand, the product rule gives us that
    $$
    c_1(k;w)=\frac{d\left[\theta^2(\tau)\cdot\lambda'(\tau)\right]}{d\tau}|_{\tau = w}=\theta^2(w)\lambda''(w)+\frac{d\theta^2}{d\tau}|_{\tau = w}\cdot\lambda'(w).
    $$
    Therefore, the claim follows from (2) of Lemma~\ref{lem:Theta2Der}.
\end{proof}

We put this together to compute the modular forms of weight $3$ with appropriate residues. 

\begin{lemma}\label{lem:Wt3Gamma1(4)Res}
    If $u\in\C\setminus\{0,1\},$ consider the elliptic curve
    $$
    y^2=4x^3-\frac{(1+14u+u^2)}{12}x+\frac{1-33u-33u^2+u^3}{216}.
    $$
    In this notation, the following are true.
    \begin{enumerate}
        \item $\displaystyle \Res\left(\frac{\theta^2(\tau)}{(\lambda(\tau)-u)}\cdot\lambda'(\tau)\right)=\frac{dx}{y}.$
        \item $$
        \Res\left(u(1-u)\cdot\frac{\theta^2(\tau)}{(\lambda(\tau)-u)^2}\cdot\lambda'(\tau)-\frac{5u-1}{12}\cdot\frac{\theta^2(\tau)}{(\lambda(\tau)-u)}\lambda'(\tau)\right)=\frac{xdx}{y}.
        $$
    \end{enumerate}
\end{lemma}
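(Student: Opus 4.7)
The plan is to apply Lemma~\ref{lem:FBResidue} directly with $k=3$ to each of the two meromorphic forms, read off the Laurent coefficients from Lemmas~\ref{lem:Resf31} and \ref{lem:Resf32}, and then transport the output from the elliptic curve in Brown--Fonseca's normalization (with $E_4(w), E_6(w)$ in the coefficients) to the curve in the statement of the lemma. The bridge between the two curves is Lemma~\ref{lem:E4E6Gamma1(4)}: writing $u=\lambda(w)$, the $(E_4,E_6)$-curve becomes
$$
y_2^2=4x_2^3-\frac{(1+14u+u^2)\theta^8(w)}{12}x_2+\frac{(1-33u-33u^2+u^3)\theta^{12}(w)}{216},
$$
which is isomorphic to the stated curve via $x_2=\theta^4(w)x$, $y_2=\theta^6(w)y$. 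Under this change of variable $\tfrac{dx_2}{y_2}=\theta^{-2}(w)\tfrac{dx}{y}$ and $\tfrac{x_2\,dx_2}{y_2}=\theta^2(w)\tfrac{x\,dx}{y}$, so any residue computed in the Brown--Fonseca frame can be rewritten in the stated frame with transparent powers of $\theta^2(w)$.

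For (1), since $f(\tau)=\theta^2(\tau)\lambda'(\tau)/(\lambda(\tau)-u)$ has only a simple pole at $w$, the Brown--Fonseca formula with $k-2=1$ collapses to its $j=0$ term:
$$
\Res_u(f)=c_{-1}(f;w)\cdot\tfrac{dx_2}{y_2}=\theta^2(w)\cdot\tfrac{dx_2}{y_2}
$$
by Lemma~\ref{lem:Resf31}, which transports to $\tfrac{dx}{y}$ on the target curve.

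For (2), write $g=u(1-u)g_1-\tfrac{5u-1}{12}f$ with $g_1=\theta^2\lambda'/(\lambda-u)^2$. By linearity of Laurent coefficients and Lemma~\ref{lem:Resf32}, the $\tfrac{(5u-1)\theta^2(w)}{12u(1-u)}$ term in $c_{-1}(g_1;w)$, after multiplication by $u(1-u)$, cancels against the correction $\tfrac{5u-1}{12}c_{-1}(f;w)=\tfrac{(5u-1)\theta^2(w)}{12}$, leaving $c_{-1}(g;w)=E_2(w)/(12\theta^2(w))$, while $c_{-2}(g;w)=u(1-u)c_{-2}(g_1;w)=1/(2\pi i\,\theta^2(w))$. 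Plugging into Lemma~\ref{lem:FBResidue} with $k=3$, the contribution $-\tfrac{E_2(w)}{12}\tfrac{dx_2}{y_2}$ coming from the $j=1$ term and the coefficient $(2\pi i)\cdot c_{-2}(g;w)=1/\theta^2(w)$ precisely cancels the $j=0$ term $c_{-1}(g;w)\tfrac{dx_2}{y_2}$, and one is left with $\Res_u(g)=\tfrac{1}{\theta^2(w)}\tfrac{x_2\,dx_2}{y_2}$, which transports to $\tfrac{x\,dx}{y}$.

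The only real obstacle is bookkeeping: verifying the two cancellations (the $(5u-1)\theta^2(w)/12$ pieces at the level of $c_{-1}$, and the $E_2(w)/(12\theta^2(w))$ pieces in the Brown--Fonseca expansion) and keeping track of the $2\pi i$ that arises because the statement uses $\lambda'=d\lambda/d\tau$ rather than $\tfrac{1}{2\pi i}d\lambda/d\tau$. Both cancellations are by design: the constants $u(1-u)$ and $-\tfrac{5u-1}{12}$ in the statement are precisely what is needed to annihilate the $\tfrac{dx}{y}$-part and isolate $\tfrac{x\,dx}{y}$, and the key input for the second cancellation is the Serre-derivative identity in Lemma~\ref{lem:Theta2Der}(2), which produces exactly the factor $\tfrac{5\lambda-1}{12}$ paired with $E_2$.
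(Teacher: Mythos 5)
Your proposal is correct and follows the same route as the paper's proof, which likewise combines Lemma~\ref{lem:FBResidue} (with $k=3$), the Laurent coefficients of Lemmas~\ref{lem:Resf31} and \ref{lem:Resf32}, the identities of Lemma~\ref{lem:E4E6Gamma1(4)}, and the normalization $x\mapsto\theta^4(w)x$, $y\mapsto\theta^6(w)y$. The cancellations of the $(5u-1)\theta^2(w)/12$ and $E_2(w)$ terms, and the bookkeeping of the $2\pi i$ against the $(2\pi i)^j$ factor, are exactly as you describe.
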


\begin{proof}
    This follows immediately from Lemmas~\ref{lem:FBResidue},\ref{lem:E4E6Gamma1(4)}, \ref{lem:Resf31},\ref{lem:Resf32}, and normalizing by $x\to\theta^4(w)x, y\to\theta^6(w)y.$ 
\end{proof}

\subsubsection{Weight $4$ Computations}

For weight $4,$ we require the residues of $$f(\tau):=\frac{\theta^4(\tau)}{(\lambda(\tau)-u)}\cdot \lambda'(\tau),\quad g(\tau):=\frac{\theta^4(\tau)}{(\lambda(\tau)-u)^2}\cdot \lambda'(\tau),\quad \text{and} \quad h(\tau):=\frac{\theta^4(\tau)}{(\lambda(\tau)-u)^3}\cdot\lambda'(\tau),$$ where $u=\lambda(w)\in\C.$ The required Laurent coefficients of $f(\tau)$ are as follows.

\begin{lemma}\label{lem:Resf41}
    If $w\in\mathfrak{H}$ and $f(\tau)=\sum\limits_{n\geq -1}c_n(f;w)(\tau-w)^n,$ then we have
    $$
    c_{-1}(f;w)=\theta^4(w).
    $$
\end{lemma}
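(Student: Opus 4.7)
The proof will be a direct mimic of Lemma~\ref{lem:Resf31}, with $\theta^2$ replaced by $\theta^4$. The function $f(\tau)$ is meromorphic at $\tau = w$ with at worst a simple pole, so the residue computation reduces to a single limit.

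First, I would note that since $u = \lambda(w)$ and the expansion of $f$ is given as starting from the $(\tau - w)^{-1}$ term, it is implicit (and needed for consistency) that $\lambda'(w) \neq 0$, i.e.\ $w$ is not an elliptic point where $\lambda$ fails to be a local coordinate. Under this assumption, the Taylor expansion of $\lambda(\tau) - u$ around $w$ has the form $\lambda'(w)(\tau - w) + O((\tau - w)^2)$, so $(\lambda(\tau) - u)/(\tau - w) \to \lambda'(w)$ as $\tau \to w$.

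Then I would compute directly
$$
c_{-1}(f;w) = \lim_{\tau \to w}(\tau - w)\cdot f(\tau) = \lim_{\tau \to w}\frac{\theta^4(\tau)\,\lambda'(\tau)}{(\lambda(\tau) - u)/(\tau - w)} = \frac{\theta^4(w)\,\lambda'(w)}{\lambda'(w)} = \theta^4(w),
$$
using continuity of $\theta^4$ and $\lambda'$ at $w \in \mathfrak{H}$ together with the cancellation of $\lambda'(w)$ in numerator and denominator. No obstacle is anticipated; the identity is essentially the residue formula for a simple pole arising from a change of variables $t = \lambda(\tau) - u$, under which the combination $\lambda'(\tau)\,d\tau$ becomes $dt$ and the residue of $\theta^4(\tau)/t$ at $t = 0$ is visibly $\theta^4(w)$.
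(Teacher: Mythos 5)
Your proposal is correct and matches the paper's argument: the paper proves this by noting the proof is analogous to Lemma~\ref{lem:Resf31}, which is exactly the limit computation $c_{-1}(f;w)=\lim_{\tau\to w}(\tau-w)f(\tau)$ with the cancellation of $\lambda'(w)$ that you carried out. Your remark that $\lambda'(w)\neq 0$ (guaranteed here since $u=\lambda(w)\notin\{0,1\}$ and $\theta(w)\neq 0$, via Lemma~\ref{lem:Theta2Der}(1)) is a harmless added justification, not a divergence in method.
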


\begin{proof}
    The proof is analogous to that of Lemma~\ref{lem:Resf31}.
\end{proof}

We now compute the required Laurent coefficients of $g(\tau).$

\begin{lemma}\label{lem:Resf42}
    If $w\in\mathfrak{H}$ and $g(\tau)=\sum\limits_{n\geq -2}c_n(g;w)(\tau-w)^n,$ then we have
    $$
    c_{-1}(g;w)=\frac{(5u-1)\theta^4(w)}{6u(u-1)}+\frac{E_2(w)}{6u(1-u)}
    $$
    and
    $$
    c_{-2}(g;w)=\frac{1}{2\pi i}\cdot \frac{1}{u(1-u)}.
    $$
\end{lemma}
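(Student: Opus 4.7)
The plan is to mirror the proof of Lemma~\ref{lem:Resf32} line by line, with $\theta^2$ replaced by $\theta^4$ throughout. Both Laurent coefficients are local invariants at $w$, so I can work with the analytic Laurent expansion of $g$ and reduce all the derivatives that appear to polynomial expressions in $\theta^2(w)$, $\lambda(w)=u$, and $E_2(w)$ by invoking Lemma~\ref{lem:Theta2Der}.

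For $c_{-2}(g;w)$, I compute
\[
c_{-2}(g;w)=\lim_{\tau\to w}(\tau-w)^2 g(\tau)=\lim_{\tau\to w}\frac{\theta^4(\tau)\,\lambda'(\tau)}{\bigl((\lambda(\tau)-u)/(\tau-w)\bigr)^2}=\frac{\theta^4(w)}{\lambda'(w)},
\]
and then substitute $\lambda'(w)=2\pi i\cdot u(1-u)\theta^4(w)$ from Lemma~\ref{lem:Theta2Der}(1). The $\theta^4(w)$ factors cancel and I read off the claimed value.

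For $c_{-1}(g;w)$, I set $k(\tau):=\theta^4(\tau)\lambda'(\tau)$ and expand
\[
g(\tau)=\frac{k(\tau)}{(\lambda(\tau)-u)^{2}}=\frac{1}{c_1(\lambda;w)^{2}(\tau-w)^{2}}\Bigl[\sum_{n\geq 0}c_n(k;w)(\tau-w)^n\Bigr]\Bigl[1-2\tfrac{c_2(\lambda;w)}{c_1(\lambda;w)}(\tau-w)+O((\tau-w)^{2})\Bigr],
\]
exactly as in the proof of Lemma~\ref{lem:Resf32}. Matching the coefficient of $(\tau-w)^{-1}$ gives
\[
c_{-1}(g;w)=\frac{1}{c_1(\lambda;w)^{2}}\Bigl[c_1(k;w)-2c_0(k;w)\cdot\frac{c_2(\lambda;w)}{c_1(\lambda;w)}\Bigr].
\]
Now $c_0(k;w)=\theta^4(w)\lambda'(w)$, $c_2(\lambda;w)/c_1(\lambda;w)=\tfrac{1}{2}\lambda''(w)/\lambda'(w)$, so the subtracted term collapses to $-\theta^4(w)\lambda''(w)$, and the product rule $c_1(k;w)=(\theta^4)'(w)\lambda'(w)+\theta^4(w)\lambda''(w)$ then leaves $(\theta^4)'(w)\lambda'(w)$ in the bracket. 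Substituting $(\theta^4)'(w)$ via Lemma~\ref{lem:Theta2Der}(2) with $s=2$ and $\lambda'(w)$ via Lemma~\ref{lem:Theta2Der}(1), then cancelling the common $c_1(\lambda;w)^{2}=(2\pi i)^{2}u^{2}(1-u)^{2}\theta^{8}(w)$ in the denominator, yields the claimed closed form.

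No step poses a real obstacle since Lemma~\ref{lem:Theta2Der} has already packaged up the two derivative identities that are needed; the only care required is book-keeping of $2\pi i$-factors between the $\tau$-derivative of $\lambda$ and the $q$-derivative appearing in Lemma~\ref{lem:Theta2Der}, and making sure the $\theta$-powers and the sign of $u(1-u)$ are tracked consistently throughout the cancellation.
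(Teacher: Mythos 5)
Your argument is exactly the paper's proof: the limit computation for $c_{-2}(g;w)$ followed by Lemma~\ref{lem:Theta2Der}(1), and the series expansion reducing the bracket to $(\theta^4)'(w)\lambda'(w)$ after the $\theta^4(w)\lambda''(w)$ terms cancel, then substitution of Lemma~\ref{lem:Theta2Der}(2) with $s=2$ — precisely the route taken for Lemma~\ref{lem:Resf32} and repeated verbatim in the paper for this lemma. One bookkeeping caveat: carried through, your (correct) computation gives $c_{-1}(g;w)=\frac{(5u-1)\theta^4(w)}{6u(1-u)}+\frac{E_2(w)}{6u(1-u)}$, with $u(1-u)$ in \emph{both} denominators — this is the value needed for Lemma~\ref{lem:Wt4Gamma1(4)Res}(2) to come out cleanly, so the $u(u-1)$ in the first term of the printed statement is a sign typo, and your derivation yields the corrected form rather than literally "the claimed closed form" as written.
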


\begin{proof}
    The computation of $c_{-2}(g;w)$ is analogous to that in Lemma~\ref{lem:Resf32}. In order to compute $c_{-1}(g;w),$ let $k(\tau):=\theta^4(\tau)\cdot\frac{d\lambda}{d\tau}=:\sum\limits_{n\geq 0}c_n(k;w)(\tau-w)^n$ and write $\lambda(\tau)=:\sum\limits_{n\geq 0}c_n(\lambda;w)(\tau-w)^n.$ Then, as in the proof of Lemma~\ref{lem:Resf42}, we have
    $$
    c_{-1}(g;w)=\frac{1}{u(1-u)\theta^4(w)}\cdot\frac{d\theta^4}{d\tau}|_{\tau=w}.
    $$
    The claim then follows from (2) of Lemma~\ref{lem:Theta2Der}.
\end{proof}

In order to compute $c_{-1}(h;w),c_{-2}(h;w),$ and $c_{-3}(h;w),$ we make use of the following lemma.

\begin{lemma}\label{lem:SomeExtraComps} 
    The following are true.

    \begin{enumerate}
        \item $\displaystyle \frac{1}{2\pi i}\lambda''(\tau) = \frac{\lambda'} {6}\left((5-7\lambda)\theta^4+E_2\right).$ 
        \item $\displaystyle \frac{1}{2\pi i}\frac{dE_2}{d\tau}=\frac{E_2^2}{12}-\frac{(1+14\lambda+\lambda^2)\theta^8}{12}.$ 
        \item $$
        \frac{1}{(2\pi i)^2}\lambda'''=\frac{\lambda'}{6}\left[\frac{37\lambda^2-58\lambda+13}{4}\theta^{8}+\frac{5-7\lambda}{2}\cdot\theta^4E_2+\frac{1}{4}E_2^2\right]
        $$
    \end{enumerate}
\end{lemma}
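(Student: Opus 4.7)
The plan is to treat all three identities as equalities inside the differential algebra $\C[\theta^2(\tau),\lambda(\tau),E_2(\tau)]$ and to derive them from Lemma~\ref{lem:Theta2Der} together with Lemma~\ref{lem:E4E6Gamma1(4)} and the classical Ramanujan identity $\frac{1}{2\pi i}\frac{dE_2}{d\tau}=\frac{E_2^2-E_4}{12}$. Throughout I will freely use $\frac{\lambda'}{2\pi i}=\lambda(1-\lambda)\theta^4$ and $\frac{1}{2\pi i}\frac{d\theta^{2s}}{d\tau}=\frac{(5\lambda-1)s}{12}\theta^{2s+4}+\frac{s}{12}\theta^{2s}E_2$.

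For part~(1), I would differentiate the formula $\frac{\lambda'}{2\pi i}=\lambda(1-\lambda)\theta^4$ once more in $\tau$. The product rule gives
\begin{equation*}
\frac{\lambda''}{2\pi i}=(1-2\lambda)\lambda'\theta^4+2\pi i\lambda(1-\lambda)\cdot\frac{1}{2\pi i}\frac{d\theta^4}{d\tau}.
\end{equation*}
Expanding $\frac{1}{2\pi i}\frac{d\theta^4}{d\tau}$ via Lemma~\ref{lem:Theta2Der}(2) with $s=2$, and rewriting the prefactor $2\pi i\lambda(1-\lambda)\theta^4=\lambda'$ wherever it appears, I expect the coefficient of $\lambda'\theta^8$ to collect to $\frac{5-7\lambda}{6}$ and the coefficient of $\lambda' E_2$ to collect to $\frac{1}{6}$, which is exactly the claim.

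For part~(2), I would invoke Ramanujan's identity $\frac{1}{2\pi i}\frac{dE_2}{d\tau}=\frac{E_2^2-E_4}{12}$ and substitute the value $E_4=(1+14\lambda+\lambda^2)\theta^8$ supplied by Lemma~\ref{lem:E4E6Gamma1(4)}(1).

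For part~(3), set $A:=(5-7\lambda)\theta^4+E_2$ so that part~(1) reads $\frac{\lambda''}{2\pi i}=\frac{\lambda' A}{6}$. Differentiating this relation once more and applying (1) a second time to eliminate $\lambda''$ yields
\begin{equation*}
\frac{\lambda'''}{(2\pi i)^2}=\frac{1}{6}\left[\frac{\lambda''}{2\pi i}A+\lambda'\cdot\frac{1}{2\pi i}\frac{dA}{d\tau}\right]=\frac{\lambda'}{6}\left[\frac{A^2}{6}+\frac{1}{2\pi i}\frac{dA}{d\tau}\right].
\end{equation*}
I would then expand $\frac{1}{2\pi i}\frac{dA}{d\tau}$ using Lemma~\ref{lem:Theta2Der} (for $D\lambda$ and $D\theta^4$) and part~(2) (for $DE_2$), and likewise expand $A^2$; finally, I would regroup the result as a polynomial in the three monomials $\theta^8,\theta^4 E_2,E_2^2$. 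The $\theta^4E_2$ and $E_2^2$ coefficients are short, but the $\theta^8$ coefficient receives three separate contributions (from $-7(D\lambda)\theta^4$, $(5-7\lambda)D\theta^4$, and the $-E_4/12$ piece of $DE_2$, plus the $A^2/6$ contribution), and reducing them to $\frac{37\lambda^2-58\lambda+13}{4}$ is the main bookkeeping obstacle; I expect this to be a routine check after putting all terms over a common denominator of $12$.
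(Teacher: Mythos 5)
Your proposal is correct and takes essentially the same route as the paper: part (1) by differentiating $\frac{\lambda'}{2\pi i}=\lambda(1-\lambda)\theta^4$ via the product rule and Lemma~\ref{lem:Theta2Der}, part (2) from Ramanujan's identity together with Lemma~\ref{lem:E4E6Gamma1(4)}, and part (3) by differentiating (1) again and expanding with Lemma~\ref{lem:Theta2Der} and (2); I checked that your bracket $\frac{A^2}{6}+\frac{1}{2\pi i}\frac{dA}{d\tau}$ indeed collects to the claimed coefficients $\frac{37\lambda^2-58\lambda+13}{4}$, $\frac{5-7\lambda}{2}$, and $\frac{1}{4}$. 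The only slip is cosmetic: in (1), after rewriting $2\pi i\,\lambda(1-\lambda)\theta^4=\lambda'$ the surviving monomials are $\lambda'\theta^4$ and $\lambda'E_2$, so your $\lambda'\theta^8$ should read $\lambda'\theta^4$.
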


\begin{proof}
    (1) follows from the product rule and Lemma~\ref{lem:Theta2Der}.

    To show (2), note that we have (see Proposition 5.3.10 of \cite{Cohen-Stromberg}) 
    $$
    \frac{1}{2\pi i}\frac{dE_2}{d\tau}=\frac{E_2^2-E_4}{12}
    $$
    and the claim follows by (1) of Lemma~\ref{lem:E4E6Gamma1(4)}.

    To prove (3), note that (1) of Lemma~\ref{lem:Theta2Der} and the product rule applied to (1) gives
    \begin{align*}
    \frac{1}{(2\pi i)^2}\lambda'''(\tau)&=\lambda'(\tau)\left[\frac{21\lambda^2-24\lambda+5}{6}\theta^8+\frac{1-2\lambda}{6}\theta^4\cdot E_2\right]+\frac{\lambda(1-\lambda)(5-7\lambda)}{6}\cdot\frac{d\theta^8}{d\tau}\\ &+\frac{\lambda(1-\lambda)}{6}\cdot\frac{d(\theta^4 E_2)}{d\tau}.
    \end{align*}
    By (2) of Lemma~\ref{lem:Theta2Der}, we have that
    $$
    \frac{\lambda(1-\lambda)(5-7\lambda)}{6}\frac{d\theta^8}{d\tau} = \frac{\lambda'\cdot(5-7\lambda)}{6}\left[\frac{5\lambda-1}{3}\theta^8+\frac{1}{3}\theta^4E_2\right].
    $$
    Similarly, we have that 
    $$
    \frac{\lambda(1-\lambda)}{6}\frac{d(\theta^4E_2)}{d\tau}=\frac{\lambda'}{6}\left(\frac{5\lambda-1}{6}\theta^4E_2+\frac{1}{4}E_2^2-\frac{1+14\lambda+\lambda^2}{12}\cdot\theta^8\right).
    $$
    Therefore, we conclude that
    $$
    \frac{1}{(2\pi i)^2}\lambda'''=\frac{\lambda'}{6}\left[\frac{37\lambda^2-58\lambda+13}{4}\theta^8+\frac{5-7\lambda}{2}\cdot\theta^4E_2+\frac{1}{4}E_2^2\right].
    $$
\end{proof}

We now compute the required Laurent coefficients of $h(\tau).$

\begin{lemma}\label{lem:Resf43}
    If $w\in\mathfrak{H}$ and $h(\tau)=\sum\limits_{n\geq -3}c_n(h;w)(\tau-w)^n,$ then we have the following formulas.
    \begin{enumerate}
    \item \begin{align*}
    c_{-1}(h;w)=\frac{1}{u^2(1-u)^2\theta^{12}(w)}&\Bigg[\frac{1}{144}(109u^2-58u+13)\theta^{16}(w)\\
    &+\frac{1}{72}(17u-7)\theta^{12}(w)E_2(w)+\frac{1}{144}\cdot \theta^8(w)E_2^2(w)\Bigg].
    \end{align*}
    
    \item $$
    c_{-2}(h;w)=\frac{1}{2\pi i}\cdot\left(\frac{17u-7}{12u^2(1-u)^2}+\frac{1}{12u^2(1-u)^2\theta^4(w)}E_2(w)\right).
    $$
    
    \item $$
    c_{-3}(h;w)=\frac{1}{(2\pi i)^2}\cdot\frac{1}{u^2(1-u)^2\theta^4(w)}.
    $$
    \end{enumerate}
\end{lemma}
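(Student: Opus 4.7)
The strategy is a direct Laurent-expansion computation, mirroring the technique used in Lemma~\ref{lem:Resf32} but now carried one order further, using the auxiliary identities of Lemma~\ref{lem:SomeExtraComps} to handle the higher derivatives that appear. I would write $k(\tau):=\theta^4(\tau)\lambda'(\tau)=\sum_{n\ge 0}c_n(k;w)(\tau-w)^n$ and expand $\lambda(\tau)-u=\lambda'(w)(\tau-w)\bigl(1+a_1(\tau-w)+a_2(\tau-w)^2+\cdots\bigr)$ where $a_1=\lambda''(w)/(2\lambda'(w))$ and $a_2=\lambda'''(w)/(6\lambda'(w))$. A short algebraic manipulation gives
\[
\frac{1}{(\lambda(\tau)-u)^3}=\frac{1}{\lambda'(w)^3(\tau-w)^3}\Bigl(1-3a_1(\tau-w)+(6a_1^2-3a_2)(\tau-w)^2+O((\tau-w)^3)\Bigr).
\]
Multiplying by $\sum c_n(k;w)(\tau-w)^n$, one reads off
\[
c_{-3}(h;w)=\frac{k(w)}{\lambda'(w)^3},\qquad c_{-2}(h;w)=\frac{k'(w)-\tfrac{3}{2}\theta^4(w)\lambda''(w)}{\lambda'(w)^3},
\]
and
\[
c_{-1}(h;w)=\frac{1}{\lambda'(w)^3}\left[\tfrac{1}{2}k''(w)-\tfrac{3\lambda''(w)}{2\lambda'(w)}k'(w)+\left(\tfrac{3\lambda''(w)^2}{2\lambda'(w)^2}-\tfrac{\lambda'''(w)}{2\lambda'(w)}\right)k(w)\right].
\]

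With these closed forms in hand, the remaining work is bookkeeping. First I would substitute $\lambda'(w)=2\pi i\cdot u(1-u)\theta^4(w)$ from Lemma~\ref{lem:Theta2Der}(1), which immediately yields the claimed formula for $c_{-3}(h;w)$. For $c_{-2}(h;w)$, I would expand $k'(w)=(\theta^4)'(w)\lambda'(w)+\theta^4(w)\lambda''(w)$ by the product rule, then plug in $(\theta^4)'(w)$ from Lemma~\ref{lem:Theta2Der}(2) with $s=2$ and $\lambda''(w)$ from Lemma~\ref{lem:SomeExtraComps}(1). The two contributions combine to give $(17u-7)\theta^{12}(w)+\theta^8(w)E_2(w)$ up to the expected factor, which yields the stated expression after dividing by $\lambda'(w)^3=(2\pi i)^3u^3(1-u)^3\theta^{12}(w)$.

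The main obstacle is the $c_{-1}(h;w)$ computation, which requires three independent ingredients: the formula for $k''(w)$ (via two applications of the product rule, bringing in $(\theta^4)''(w)$, and thus $E_2'(w)$ and $\lambda''(w)$), the formula for $\lambda'''(w)$ from Lemma~\ref{lem:SomeExtraComps}(3), and the square of the formula for $\lambda''(w)$ from Lemma~\ref{lem:SomeExtraComps}(1). To keep the algebra under control, I would first rewrite the bracketed expression as a polynomial in the three independent quantities $\theta^{16}(w)$, $\theta^{12}(w)E_2(w)$, and $\theta^8(w)E_2^2(w)$ (using $E_4=(1+14u+u^2)\theta^8$ from Lemma~\ref{lem:E4E6Gamma1(4)} to eliminate $E_4$ whenever it appears through Lemma~\ref{lem:SomeExtraComps}(2)). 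After collecting, one obtains coefficients $\tfrac{1}{144}(109u^2-58u+13)$, $\tfrac{1}{72}(17u-7)$, and $\tfrac{1}{144}$ respectively, matching the statement once the overall factor $1/(u^2(1-u)^2\theta^{12}(w))$ is pulled out.

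Finally I would cross-check the three formulas on a sample CM point (e.g.\ $w=i$, so $E_2(i)$ is known explicitly and $\theta^2(i)$ has a closed form), which would catch any sign or combinatorial slip in the expansion of $1/(\lambda(\tau)-u)^3$.
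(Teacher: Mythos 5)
Your proposal is correct and follows essentially the same route as the paper's proof: expand $1/(\lambda(\tau)-u)^3$ around $w$ to get the same three closed forms for $c_{-3},c_{-2},c_{-1}$ in terms of $k=\theta^4\lambda'$ and derivatives of $\lambda$ at $w$, then substitute Lemma~\ref{lem:Theta2Der} and Lemma~\ref{lem:SomeExtraComps} (the paper likewise reduces everything to the monomials $\theta^{16}$, $\theta^{12}E_2$, $\theta^8E_2^2$ and collects coefficients). The only difference is that you leave the final collection for $c_{-1}$ asserted rather than written out, which the paper carries through explicitly; the stated coefficients do check out.
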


\begin{proof}
    We write $k(\tau)=\theta^4(\tau)\cdot\frac{d\lambda}{d\tau}.$ We then have that 
    \begin{align*}
        h(\tau)&=\left[\sum\limits_{n\geq 0}c_n(k;w)(\tau-w)^3\right]\cdot\frac{1}{(\tau-w)^3}\cdot\frac{1}{c_1(\lambda;w)^3} \\
        &\ \ \cdot\left[1-3\sum\limits_{n\geq 2}\frac{c_n(\lambda;w)}{c_1(\lambda;w)}(\tau-w)^{n-1}+6\left(\sum\limits_{n\geq 2}\frac{c_n(\lambda;w)}{c_1(\lambda;w)}(\tau-w)^{n-1}\right)^2 +O((\tau-w)^3)\right] \\
        &=\left[\sum\limits_{n\geq 0}c_n(k;w)(\tau-w)^3\right]\cdot\frac{1}{(\tau-w)^3}\cdot\frac{1}{c_1(\lambda;w)^3}\\
        &\ \ \cdot\Bigg[1-3\frac{c_2(\lambda;w)}{c_1(\lambda;w)}(\tau-w)-3\frac{c_3(\lambda;w)}{c_1(\lambda;w)}(\tau-w)^2+6\left(\frac{c_2(\lambda;w)}{c_1(\lambda;w)}\right)^2\cdot(\tau-w)^2+O((\tau-w)^3)\Bigg].
    \end{align*}

    Therefore, it follows that we have
    $$
    c_{-3}(h;w) = \frac{c_0(k;w)}{c_1(\lambda;w)^3},
    $$

    $$
    c_{-2}(h;w)=\frac{1}{c_1(\lambda;w)^3}\left[-3\frac{c_0(k;w)c_2(\lambda;w)}{c_1(\lambda;w)}+c_1(k;w)\right],
    $$
    and
    $$
    c_{-1}(h;w)=\frac{1}{c_1(\lambda;w)^3}\left[c_0(k;w)\cdot\left(-3\frac{c_3(\lambda;w)}{c_1(\lambda;w)}+\frac{6c_2(\lambda;w)^2}{c_1(\lambda;w)^2}\right)-3\frac{c_1(k;w)c_2(\lambda;w)}{c_1(\lambda;w)}+c_2(k;w)\right].
    $$

    The formula for $c_{-3}(h;w)$ clearly follows from (2) of Lemma~\ref{lem:Theta2Der}. 

    To obtain the formula for $c_{-2}(h;w),$ note that we have
    $$
    -3\frac{c_0(k;w)c_2(\lambda;w)}{c_1(\lambda;w)}=-3\frac{\theta^4(w)\lambda'(w)\lambda''(w)}{2\lambda'(w)}=-\frac{3}{2}\theta^4(w)\lambda''(w).
    $$

    On the other hand, the product rule gives us that
    $$
    c_1(k;w)=k'(w)=\lambda'(w)\cdot\frac{d\theta^4}{d\tau}|_{\tau=w}+\theta^4(w)\lambda''(w).
    $$
    Therefore, we have that
    $$
    c_{-2}(h;w)=\frac{1}{\lambda'(w)^2}\left[-\frac{1}{2}\theta^4\frac{\lambda''(w)}{\lambda'(w)}+\frac{d\theta^4}{d\tau}|_{\tau=w}\right].
    $$
    Applying Lemma~\ref{lem:Theta2Der} and (1) of Lemma~\ref{lem:SomeExtraComps}, we have our claim.

    Finally, we compute $c_{-1}(h;w).$ By (3) of Lemma~\ref{lem:SomeExtraComps}, we have that
    $$
    -3\frac{c_3(\lambda;w)}{c_1(\lambda;w)}=-\frac{(2\pi i)^2}{12}\left[\frac{37u^2-58u+13}{4}\theta^8(w)+\frac{5-7u}{2}\cdot\theta^4(w)E_2(w)+\frac{1}{4}E_2^2(w)\right]
    $$
    On the other hand, by (1) of Lemma~\ref{lem:SomeExtraComps} we have
    $$
    6\frac{c_2(\lambda;w)^2}{c_1(\lambda;w)^2}=\frac{(2\pi i)^2}{24}\cdot\left[(5-7u)^2\theta(w)^8+2E_2(w)(5-7u)\theta(w)^4+E_2^2(w)\right]
    $$
    Therefore, we have 
    \begin{align*}
    \frac{1}{(2\pi i)^3}c_0(k;w)&\cdot\Bigg[-3\frac{c_3(\lambda;w)}{c_1(\lambda;w)}+\frac{6c_2(\lambda;w)^2}{c_1(\lambda;w)^2}\Bigg]= \frac{61u^2-82u+37}{48}\theta^{16}(w)u(1-u) \\
    &+u(1-u)\frac{5-7u}{24}\theta^{12}(w)E_2(w)+\frac{u(1-u)\theta^8(w)}{48}E_2^2(w).
    \end{align*}
    Moreover, by similar computations, we have
    \begin{align*}
    -3\cdot\frac{1}{(2\pi i)^3}\frac{c_1(k;w)c_2(\lambda;w)}{c_1(\lambda;w)}&=\frac{-u(1-u)(2-u)(5-7u)\theta^{16}(w)}{12}\\
    &-\frac{u(1-u)(7-8u)\theta^{12}(w)E_2(w)}{12}-\frac{u(1-u)}{12}\theta^8(w)E_2^2(w).
    \end{align*}
    We also similarly have
    \begin{align*}
    \frac{1}{(2\pi i)^3}\cdot c_2(k;w)&=\frac{\theta^{16}(w)}{72}u(1-u)(5u^2-20u+11)\\
    &+\frac{5\theta^{12}(w)E_2(w)}{36}u(1-u)(2-u)+\frac{5u(1-u)}{72}\theta^8(w)E_2^2(w).
    \end{align*}
    Putting this together with (1) of Lemma~\ref{lem:Theta2Der}, we have our claimed $c_{-1}(h;w).$
\end{proof}

We put Lemmas~\ref{lem:Resf41}, \ref{lem:Resf42}, and \ref{lem:Resf43} together to compute meromorphic modular forms of weight $4$ with appropriate residues. 

\begin{lemma}\label{lem:Wt4Gamma1(4)Res}
    If $u\in\C\setminus\{0,1\},$ consider the elliptic curve
    $$
    y^2=4x^3-\frac{(1+14u+u^2)}{12}x+\frac{1-33u-33u^2+u^3}{216}.
    $$
    In this notation, the following are true.
    \begin{enumerate}
        \item $\Res\left(\frac{\theta^4(\tau)}{(\lambda(\tau)-u)}\cdot\lambda'(\tau)\right)=\left(\frac{dx}{y}\right)^2.$
        \item $$
        \Res\left(\frac{u(1-u)\theta^4(\tau)\lambda'(\tau)}{2(\lambda(\tau)-u)^2}-\frac{(5u-1)\theta^4(\tau)\lambda'(\tau)}{12(\lambda(\tau)-u)}\right) = \frac{dx}{y}\cdot\frac{xdx}{y}.
        $$
        \item         
        $$
        \Res\left(\frac{u^2(1-u)^2\theta^4(\tau)\lambda'(\tau)}{(\lambda(\tau)-u)^3}+Q(u)\frac{\theta^4(\tau)\lambda'(\tau)}{(\lambda(\tau)-u)^2}+P(u)\frac{\theta^4(\tau)\lambda'(\tau)}{(\lambda(\tau)-u)}\right) = \left(\frac{xdx}{y}\right)^2,
        $$
        where $\displaystyle P(u):=\frac{1}{144}(61u^2-46u+1)$ and $\displaystyle Q(u):=\frac{-u(1-u)(17u-7)}{12}.$
    \end{enumerate}
\end{lemma}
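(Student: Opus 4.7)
The plan is to apply the Fonseca--Brown residue formula (Lemma~\ref{lem:FBResidue}) with $k=4$ to each of the three meromorphic modular forms, substitute the Laurent coefficients computed in Lemmas~\ref{lem:Resf41}, \ref{lem:Resf42}, and \ref{lem:Resf43}, invoke Lemma~\ref{lem:E4E6Gamma1(4)} to rewrite $E_4(w), E_6(w)$, and then normalize the model of the elliptic curve via a scaling change of variables. Concretely, setting $\omega_0 = dx/y$ and $\eta_0 = xdx/y$ on the curve $y^2 = 4x^3 - \frac{E_4(w)}{12}x + \frac{E_6(w)}{216}$ of Lemma~\ref{lem:FBResidue}, expanding the binomial in the residue formula gives
\[
\Res_u(f) = c_{-1}\,\omega_0^2 + 4\pi i\, c_{-2}\,\omega_0\!\left(\eta_0 - \tfrac{E_2(w)}{12}\omega_0\right) + (2\pi i)^2 c_{-3}\!\left(\eta_0 - \tfrac{E_2(w)}{12}\omega_0\right)^2.
\]
By Lemma~\ref{lem:E4E6Gamma1(4)}, the substitution $x \mapsto \theta^4(w)X$, $y \mapsto \theta^6(w)Y$ takes this curve to the normalized model in the statement and sends $\omega_0 \mapsto \theta^{-2}(w)\omega$, $\eta_0 \mapsto \theta^2(w)\eta$, where $\omega = dX/Y$ and $\eta = XdX/Y$.

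For claim (1), the form has a simple pole at $w$, so only $c_{-1}=\theta^4(w)$ contributes by Lemma~\ref{lem:Resf41}, and the residue is immediately $\theta^4(w)\omega_0^2 = \omega^2$. For claim (2), the form has a pole of order two so $c_{-3}=0$; plugging in the values from Lemma~\ref{lem:Resf42}, the $\omega_0\eta_0$ component has coefficient $4\pi i\, c_{-2} = 1$ after the $u(1-u)/2$ normalization, which converts to $\omega\eta$ under the substitution. The additional $-(5u-1)/12$ term in front of the simple-pole piece is precisely designed so that the $\theta^4(w)$ portion of $c_{-1}$ cancels, while the remaining $E_2(w)$ portion exactly compensates the $-E_2(w)/12$ contribution arising from the $c_{-2}$ term of the residue formula; the $\omega_0^2$ coefficient therefore vanishes.

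For claim (3), all three Laurent coefficients contribute, and the coefficients $P(u)$ and $Q(u)$ are chosen so that simultaneously the $\omega_0^2$ and $\omega_0\eta_0$ components vanish and the $\eta_0^2$ component equals $1/\theta^4(w)$. The $\eta_0^2$ computation is immediate from Lemma~\ref{lem:Resf43}(3). The $\omega_0\eta_0$ coefficient combines $4\pi i\, c_{-2}$ with a multiple of $E_2(w)\,c_{-3}$: the choice $Q(u) = -u(1-u)(17u-7)/12$ is exactly what kills the $\theta^4(w)$ part of $c_{-2}$, leaving an $E_2(w)$ piece that is annihilated by the $c_{-3}$ contribution. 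The $\omega_0^2$ coefficient gathers terms in $1$, $E_2(w)$, and $E_2^2(w)$ (modulo powers of $\theta^4(w)$); choosing $P(u) = (61u^2 - 46u + 1)/144$ cancels the $\theta^4(w)$-polynomial part of $c_{-1}$, while the $E_2(w)$ and $E_2^2(w)$ parts telescope against the correction terms from $c_{-2}$ and $c_{-3}$ in the Fonseca--Brown formula.

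The main obstacle is the simultaneous cancellation in claim (3): one must verify three polynomial identities in $u$ (in the $1$, $E_2(w)$, and $E_2^2(w)$ coefficients of the $\omega_0^2$ component, and in the $1$ and $E_2(w)$ coefficients of the $\omega_0\eta_0$ component) using a single pair $(P(u), Q(u))$. The identities are low-degree and follow by direct expansion, but they uniquely determine $P$ and $Q$. Once these cancellations are verified, applying the substitution $(\omega_0, \eta_0) \mapsto (\theta^{-2}(w)\omega, \theta^2(w)\eta)$ converts the $\eta_0^2$ coefficient $1/\theta^4(w)$ into $\eta^2$, completing the proof.
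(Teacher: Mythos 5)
Your proposal is correct and takes essentially the same route as the paper: apply Lemma~\ref{lem:FBResidue} with $k=4$, insert the Laurent coefficients from Lemmas~\ref{lem:Resf41}, \ref{lem:Resf42}, and \ref{lem:Resf43}, use Lemma~\ref{lem:E4E6Gamma1(4)}, and rescale by $x\mapsto\theta^4(w)x$, $y\mapsto\theta^6(w)y$; the cancellations you describe (in particular the vanishing of the $\omega_0^2$ and $\omega_0\eta_0$ components in part (3) with the stated $P(u),Q(u)$) do check out by direct expansion. One small caveat: the cancellation you invoke in part (2) requires $c_{-1}(g;w)=\frac{(5u-1)\theta^4(w)+E_2(w)}{6u(1-u)}$, i.e.\ the first denominator in the statement of Lemma~\ref{lem:Resf42} should be $u(1-u)$ rather than $u(u-1)$ (a sign typo there, as its own proof and the weight-$3$ analogue Lemma~\ref{lem:Resf32} show), so your argument is right but implicitly uses the corrected value rather than the printed one.
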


\begin{remark}\label{rem:GMHyper}

As an alternative to this computation, one can use the proof of Proposition~\ref{prop:quasi} and the Gauss--Manin connection to compute these residues. For example, the Legendre family of elliptic curves 
$$
E^{\Leg}_\lambda: y^2=x(1-x)(1-\l x)
$$  
parameterized by $\lambda\in X(2)$ is $2$-isogenous to the $E_u$ used above. In this setting, for $0<\lambda<1,$ a real period can be given as 
$$
\int_0^1 \frac{dx}{y}=\int_0^1\frac{dx}{\sqrt{x(1-x)(1-\l x)}}=\pi\pFq21{\frac12&\frac12}{&1}{\l},
$$
where ${_2F_1}$ is the classical hypergeometric function, and a similar expression exists for the quasi-periods. Moreover, locally near the cusp $i\infty,$ we have (for example, see \cite{BB} and \cite{Yang-modular})
$$
\theta(\tau)^2=\pFq21{\frac12&\frac12}{&1}{\l(\tau)}
$$ 
Using this setting, the residue computations reduce to hypergeometric identities.
\end{remark}

\subsection{\texorpdfstring{The case of $\SL_2(\Z)$}{The case of SL2(Z)}}\label{subsec:ERCSL2(Z)}

In order to deduce Theorem~\ref{thm:SL2(Z)Basis} from Corollary~\ref{cor:ASD-basis}, we compute the residue associated to the form $f_{k,r,\alpha_D}$ in \eqref{eq:f(k,r,z)}, where $\displaystyle \alpha_D:=\frac{-b+\sqrt{-D}}{2a}\in\mathfrak{H}$ is an imaginary quadratic number.  We then show that the residue is an eigenvector of the CM action on $H^1_{dR}(E_{\alpha_D}/\C).$

In order to obtain the residue, we make use of the following elementary lemma.

\begin{lemma}\label{lem:EkE14-kRel}
    If $k\in\{4,6,8,10,14\},$ then we have
    $$
    2\pi i\frac{E_k(\tau)E_{14-k}(\tau)}{\Delta(\tau)j'(\tau)}=-1.
    $$
\end{lemma}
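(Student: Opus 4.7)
The plan is to reduce the identity to two classical facts: a formula for $j'(\tau)$ in terms of $E_{14}$ and $\Delta$, and the product identity $E_k E_{14-k} = E_{14}$ for the listed weights.

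First, I would compute $j'(\tau)$ using Ramanujan's differential equations. Writing $j = E_4^3/\Delta$ and applying the identities $q\frac{dE_4}{dq} = \frac{E_2 E_4 - E_6}{3}$ and $q\frac{d\Delta}{dq} = E_2\Delta$, the quotient rule gives
\[
q\frac{dj}{dq} \;=\; \frac{3E_4^2 \cdot \tfrac{E_2E_4 - E_6}{3}\cdot \Delta - E_4^3 \cdot E_2\Delta}{\Delta^2} \;=\; -\frac{E_4^2 E_6}{\Delta}.
\]
Next, I would observe that $E_4^2 E_6$ is a holomorphic modular form of weight $14$ on $\SL_2(\Z)$ with constant term $1$, and since $\dim M_{14}(\SL_2(\Z))=1$ this forces $E_4^2 E_6 = E_{14}$. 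Converting to $\tau$-derivative via $\frac{d}{d\tau} = 2\pi i \, q\frac{d}{dq}$ yields
\[
j'(\tau) \;=\; -2\pi i\cdot \frac{E_{14}(\tau)}{\Delta(\tau)}.
\]

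Second, for each $k\in\{4,6,8,10,14\}$, the product $E_k(\tau) E_{14-k}(\tau)$ is a weight-$14$ modular form on $\SL_2(\Z)$ with constant term $1$ (with the convention $E_0=1$ handling the case $k=14$). Again by $\dim M_{14}(\SL_2(\Z))=1$, we conclude $E_k E_{14-k} = E_{14}$.

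Combining the two identities gives
\[
2\pi i\cdot \frac{E_k(\tau)E_{14-k}(\tau)}{\Delta(\tau) j'(\tau)} \;=\; 2\pi i\cdot \frac{E_{14}(\tau)}{\Delta(\tau)\cdot\left(-2\pi i\, E_{14}(\tau)/\Delta(\tau)\right)} \;=\; -1,
\]
as claimed. There is no real obstacle here; the only point requiring any care is the bookkeeping for the $k=14$ case (where $E_{14-k}=E_0=1$), and this is handled by the standard convention.
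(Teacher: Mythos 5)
Your proof is correct, and it takes a somewhat different (more explicit) route than the paper. The paper's argument is shorter: it observes that $2\pi i\,E_k(\tau)E_{14-k}(\tau)/(\Delta(\tau) j'(\tau))$ is a holomorphic modular form of weight $0$ on $\SL_2(\Z)$, hence a constant, and then reads the constant off the leading $q$-coefficients (since $\frac{1}{2\pi i}j' = q\frac{dj}{dq} = -q^{-1}+O(1)$ and $\Delta = q+O(q^2)$, the denominator has constant term $-1$ while the numerator has constant term $1$). You instead compute everything explicitly: Ramanujan's identities give $j'(\tau) = -2\pi i\,E_4^2E_6/\Delta$, and the one-dimensionality of $M_{14}(\SL_2(\Z))$ identifies both $E_4^2E_6$ and each product $E_kE_{14-k}$ (for $k\in\{4,6,8,10,14\}$, with $E_0=1$) with $E_{14}$. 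The content is essentially equivalent: your identification of the numerator with $E_4^2E_6$ is precisely what justifies the holomorphicity at the elliptic points (the zeros of $j'$) that the paper's ``clearly'' leaves implicit, so your version is slightly more self-contained, at the cost of invoking Ramanujan's differential equations, whereas the paper's constant-term comparison is quicker. Your bookkeeping for the $k=14$ case is fine.
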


\begin{proof}
    This is clearly a holomorphic modular form of weight $0$ and therefore a constant. Equality follows from comparing the constant term of the $q$-series. 
\end{proof}

By Lemma~\ref{lem:EkE14-kRel}, we can write 
$$
f_{k,r,\alpha_D}(\tau) = -\frac{1}{2\pi i} E_k(\tau) R^r_{2-k,z}\left(\frac{1}{E_k(z)}\cdot\frac{j'(z)}{j(\tau)-j(z)}\right)\Bigg|_{z={\alpha_D}},
$$
where $R^r_{2-k,z}$ is defined in \eqref{eq:R(k,tau)}.
In order to compute the residue, we make use of the following identity which expresses $R_{2-k,z}^r$ in terms of $\frac{\partial}{\partial z}.$

\begin{lemma}\cite[Lemma 5.4]{HMFBook}\label{lem:RaisingOpId}
    If $k\in\Z$ and $r\geq 0,$ then we have
    $$
    R^r_{2-k,\tau}=\frac{1}{(4\pi)^r}\sum\limits_{l=0}^r(-1)^l\binom{r}{l}(2-k+l)_{r-l}\cdot y^{l-r}\left(-2i\frac{\partial}{\partial\tau}\right)^l,
    $$
    where $\tau=x+iy$ and $(x)_n$ is the rising factorial
    $$
    (x)_n:=\begin{cases}
        1 & \text{ if }n=0, \\
        x(x+1)\ldots(x+n-1) & \text{ if }n>0.
    \end{cases}
    $$
\end{lemma}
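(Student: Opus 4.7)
The plan is to prove the identity by induction on $r$. The base case $r=0$ is immediate, since both sides reduce to the identity operator (the RHS sum has only the $l=0$ term, with $\binom{0}{0}(2-k)_0 y^0 (-2i\partial)^0 = 1$).

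For the inductive step, I would use $R^{r+1}_{2-k,\tau} = R_{2r+2-k,\tau}\circ R^r_{2-k,\tau}$ and write the outermost raising operator in terms of $D := -2i\partial/\partial\tau$, namely $R_{k',\tau} = \frac{1}{4\pi}\bigl(D - k'/y\bigr)$ (using $\frac{1}{2\pi i}\partial_\tau = \frac{1}{4\pi}D$). Applying this with $k' = 2r+2-k$ to the inductive hypothesis, the key commutation to track is
\[
D(y^{l-r}D^l f) = y^{l-r}D^{l+1}f - (l-r)y^{l-r-1}D^l f,
\]
which follows from the one-line computation $\partial_\tau y = \frac{1}{2i}$, hence $Dy = -1$. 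This splits $(4\pi)^{r+1} R^{r+1}_{2-k,\tau}$ into two sums: a ``shift'' sum with $y^{l-r}D^{l+1}$ and a ``vertical'' sum with coefficient $-(l-r) - (2r+2-k) = -(2-k+l+r)$ times $y^{l-r-1}D^l$.

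After the reindexing $l' = l+1$ in the first sum, collecting the coefficient of $y^{l'-(r+1)}D^{l'}$ reduces the induction step to the purely combinatorial identity
\[
\binom{r+1}{l'}(2-k+l')_{r+1-l'} \;=\; \binom{r}{l'-1}(2-k+l'-1)_{r-l'+1} + \binom{r}{l'}(2-k+l'+r)(2-k+l')_{r-l'},
\]
which I would verify by first using the rising-factorial identities $(2-k+l'-1)_{r-l'+1} = (2-k+l'-1)\cdot(2-k+l')_{r-l'}$ and $(2-k+l')_{r-l'}(2-k+l'+r) = (2-k+l')_{r+1-l'}$, factoring out $(2-k+l')_{r-l'}$, and then invoking Pascal's rule $\binom{r+1}{l'}=\binom{r}{l'-1}+\binom{r}{l'}$ together with the telescoping relation $(2-k+l'-1) + (2-k+l'+r) = 2(2-k+l')+r-1$, which matches after rewriting $\binom{r+1}{l'}(2-k+l')_{r+1-l'}$ using Pascal's rule and one more factoring.

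The main obstacle is purely bookkeeping: keeping the alternating signs $(-1)^{l'}$ matched across the two contributions (the minus from $R_{k',\tau}$ interacts nontrivially with the minus from $Dy = -1$), and verifying that the boundary terms $l' = 0$ and $l' = r+1$ in the target sum each receive a contribution from exactly one of the two pieces with the correct coefficient. Once the indices and signs are lined up, the inductive step collapses to the Pascal-plus-rising-factorial identity above, and the lemma follows.
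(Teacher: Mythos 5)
Your strategy (induction on $r$, the commutator $[D,y^{a}]=-a\,y^{a-1}$ coming from $Dy=-1$, and a Pascal-plus-rising-factorial identity) is the right one — the paper itself gives no proof, it only cites the lemma from the reference — but the step you dismissed as ``purely bookkeeping'' is exactly where the argument fails, and it cannot be fixed for the statement as printed. Track the signs: with the inductive hypothesis carrying the stated coefficients $(-1)^{l}\binom{r}{l}(2-k+l)_{r-l}$, the shift piece contributes $(-1)^{l'-1}\binom{r}{l'-1}(2-k+l'-1)_{r-l'+1}$ and the vertical piece contributes $(-1)^{l'+1}(2-k+r+l')\binom{r}{l'}(2-k+l')_{r-l'}$ to the coefficient of $y^{l'-(r+1)}D^{l'}$; both carry the sign $-(-1)^{l'}$, so by the (true) combinatorial identity you quote their sum is $-(-1)^{l'}\binom{r+1}{l'}(2-k+l')_{r+1-l'}$, i.e.\ the \emph{negative} of what the induction needs. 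The identity actually required to close your induction is your displayed identity with the left-hand side negated, which is false. Run correctly, your induction produces the coefficients $(-1)^{r-l}\binom{r}{l}(2-k+l)_{r-l}$, not $(-1)^{l}\binom{r}{l}(2-k+l)_{r-l}$.

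The underlying reason is that, with the paper's normalization $R_{k,\tau}=\frac{1}{2\pi i}\partial_\tau-\frac{k}{4\pi y}=-\frac{1}{4\pi}\bigl(2i\partial_\tau+\frac{k}{y}\bigr)$, the displayed formula is off by an overall factor $(-1)^{r}$: already for $r=1$ the right-hand side is $\frac{1}{4\pi}\bigl(\frac{2-k}{y}+2i\partial_\tau\bigr)=-R_{2-k,\tau}$. (The cited source states the expansion for the raising operator $2i\partial_\tau+\kappa y^{-1}$; each factor here differs from that operator by $-\frac{1}{4\pi}$, whence the extra $(-1)^{r}$ after $r$-fold composition.) So you should either prove the corrected identity with $(-1)^{r-l}$ in place of $(-1)^{l}$ — your argument then goes through verbatim, with the identity you wrote — or note explicitly that the statement needs this sign correction; the discrepancy is harmless downstream, since Lemma~\ref{lem:ResSL2(Z)} only uses the expansion up to a multiplicative constant. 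As written, however, your claim that ``the signs line up'' is incorrect, and checking the boundary term $l'=r+1$ (top coefficient $(-1)^{r}$ versus the $+1$ forced by multiplying out $r$ copies of $D-\frac{\ast}{y}$) would have revealed the problem immediately.
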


To compute the Laurent series of $f_{k,r,\alpha_D}(\tau)$ at $\tau=\alpha_D,$ we make use of the following lemma.

\begin{lemma}\label{lem:LaurentSeries}
Fix $\tau_0\in\mathfrak{H}$. Let $f$ and $g$ be holomorphic functions at $\tau_0$ with $g'(\tau_0)\neq 0.$ If $r\geq 0$ and
$$
h(\tau):=f(\tau)\frac{\partial^r}{\partial z^r}\Bigg|_{z=\tau_0}\left(\frac{1}{f(z)}\cdot\frac{g'(z)}{g(z)-g(\tau)}\right),
$$
then we have
$$
h(\tau)=r!(\tau-\tau_0)^{-r-1}+\sum\limits_{n\geq 0}c_n(\tau-\tau_0)^n.
$$
\end{lemma}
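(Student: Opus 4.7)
The plan is to isolate the principal part in $z$ at $z = \tau$ before differentiating, so that applying $\partial_z^r$ at $z = \tau_0$ produces a single pure pole of the expected order in $\tau - \tau_0$ with no intermediate negative powers. First I would use $g'(\tau_0) \neq 0$ to factor $g(z) - g(\tau) = (z - \tau)\, G(z, \tau)$ with $G$ jointly holomorphic near $(\tau_0, \tau_0)$ and $G(\tau_0, \tau_0) = g'(\tau_0) \neq 0$. Setting
$$
A(z, \tau) := \frac{g'(z)}{f(z)\, G(z, \tau)}
$$
yields a function that is jointly holomorphic near $(\tau_0, \tau_0)$, and a direct check on the diagonal gives $A(\tau, \tau) = 1/f(\tau)$.

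Next I would decompose
$$
\frac{1}{f(z)} \cdot \frac{g'(z)}{g(z) - g(\tau)} = \frac{A(z, \tau)}{z - \tau} = \frac{1}{f(\tau)(z - \tau)} + H(z, \tau),
$$
where $H(z, \tau) := (A(z, \tau) - A(\tau, \tau))/(z - \tau)$. Because $A - A(\tau,\tau)$ vanishes identically along the diagonal $z = \tau$, the function $H$ extends to a jointly holomorphic function near $(\tau_0, \tau_0)$; this can be seen either from the integral representation $A(z,\tau) - A(\tau,\tau) = (z-\tau)\int_0^1 (\partial_z A)(\tau + s(z-\tau), \tau)\, ds$, or by expanding $A$ as a double power series in $z - \tau_0$ and $\tau - \tau_0$ and carrying out the divided difference termwise. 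After this rearrangement, all of the $z \to \tau$ singular behavior is packaged in the single rational summand $1/(f(\tau)(z-\tau))$.

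Finally, I would apply $\partial_z^r$ and set $z = \tau_0$. Since
$$
\partial_z^r \frac{1}{z-\tau}\bigg|_{z = \tau_0} = \frac{(-1)^r r!}{(\tau_0 - \tau)^{r+1}}
$$
is a pure power of $\tau - \tau_0$ of order $-r-1$, while $\tilde H(\tau) := \partial_z^r H(z,\tau)\big|_{z=\tau_0}$ is holomorphic in $\tau$ near $\tau_0$, multiplying by $f(\tau)$ cancels the factor $1/f(\tau)$ in the leading term and produces the asserted Laurent expansion, with $f(\tau)\tilde H(\tau)$ supplying the Taylor tail $\sum_{n \geq 0} c_n (\tau - \tau_0)^n$. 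The step requiring genuine care is the joint holomorphy of $H$, as this is precisely what rules out spurious intermediate negative powers of $\tau - \tau_0$ and forces the principal part of $h$ to concentrate at the single order $-r-1$; everything else is bookkeeping.
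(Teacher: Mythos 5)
Your proposal is correct and takes essentially the same route as the paper: both subtract the singular term $\frac{1}{f(\tau)(z-\tau)}$ and use joint holomorphy of the remainder in $(z,\tau)$ near the diagonal before applying $\partial_z^r\big|_{z=\tau_0}$, with your argument merely making the joint holomorphy explicit via the factorization $g(z)-g(\tau)=(z-\tau)G(z,\tau)$ and a divided difference where the paper simply asserts analyticity and interchanges a limit with the derivative. One minor remark: carried to the end, your formula $\partial_z^r(z-\tau)^{-1}\big|_{z=\tau_0}=\frac{(-1)^r r!}{(\tau_0-\tau)^{r+1}}=-r!\,(\tau-\tau_0)^{-r-1}$ gives leading coefficient $-r!$ rather than $+r!$, a harmless sign discrepancy already present in the paper's statement and immaterial in its application, since the residues in Lemma~\ref{lem:ResSL2(Z)} are only needed up to a multiplicative constant.
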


\begin{proof}
    The statement is equivalent to showing that
    $$
    \frac{\partial^r}{\partial z^r}\Bigg|_{z=\tau_0}\left(\frac{1}{f(z)}\cdot\frac{g'(z)}{g(z)-g(\tau)}-\frac{1}{f(\tau)(z-\tau)}\right)
    $$
    is holomorphic at $\tau=\tau_0.$
    Since $f$ and $g$ are analytic near $\tau_0,$ we have that 
    $$
    \frac{1}{f(z)}\cdot\frac{g'(z)}{g(z)-g(\tau)}-\frac{1}{f(\tau)(z-\tau)}
    $$
    is analytic in $(z,\tau)$ near $(\tau_0,\tau_0).$ Therefore, we have
    \begin{multline*}
    \lim\limits_{\tau\to\tau_0}\frac{\partial^r}{\partial z^r}\Bigg|_{z=\tau_0}\left(\frac{1}{f(z)}\cdot\frac{g'(z)}{g(z)-g(\tau)}-\frac{1}{f(\tau)(z-\tau)}\right) \\ = \frac{\partial^r}{\partial z^r}\Bigg|_{z=\tau_0}\left(\lim\limits_{\tau\to\tau_0}\left(\frac{1}{f(z)}\cdot\frac{g'(z)}{g(z)-g(\tau)}-\frac{1}{f(\tau)(z-\tau)}\right)\right)
    \end{multline*}
    and this is clearly holomorphic.
\end{proof}

We now compute the residue of $f_{k,r,\alpha_D}(\tau)$.

\begin{lemma}\label{lem:ResSL2(Z)}
    If $k\in\{4,6,8,10,14\}$, then we have\footnote{The notation $\doteq$ denotes equality up to a multiplicative constant.}
    $$
    \Res(f_{k,r,\alpha_D})\doteq \left(\frac{1}{\Im(\alpha_D)}-\left(\frac{1}{\pi}\mathlarger{\wp}_{\alpha_D}(w)+\frac{\pi}{3}E_2(w)\right)\right)dw^{k-2},
    $$
    where $w$ is the coordinate on the elliptic curve $E_{\alpha_D}/\C\cong\frac{\C}{\Z+\Z\alpha_D}$ and $\wp_{\alpha_D}$ is the Weierstrass elliptic function with periods $1$ and $\alpha_D$.
\end{lemma}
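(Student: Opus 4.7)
The plan is to compute $\Res(f_{k,r,\alpha_D})$ by direct application of Lemma~\ref{lem:FBResidue} after extracting the Laurent principal part of $f_{k,r,\alpha_D}$ at $\tau=\alpha_D$. Using Lemma~\ref{lem:EkE14-kRel}, I first rewrite
\[
f_{k,r,\alpha_D}(\tau) = -\frac{1}{2\pi i}\,E_k(\tau)\,R^r_{2-k,z}\!\left(\frac{j'(z)}{E_k(z)(j(\tau)-j(z))}\right)\bigg|_{z=\alpha_D}.
\]
I then apply Lemma~\ref{lem:RaisingOpId} to expand $R^r_{2-k,z}$ as a sum over $l=0,\ldots,r$ of terms proportional to $\Im(\alpha_D)^{l-r}\,\partial_z^l$, with explicit coefficients involving Pochhammer symbols $(2-k+l)_{r-l}$ and binomial coefficients $\binom{r}{l}$.

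For each $l$, Lemma~\ref{lem:LaurentSeries} applied with $f=E_k$, $g=j$, $\tau_0=\alpha_D$ (and accounting for the sign flip $j(\tau)-j(z)=-(j(z)-j(\tau))$) yields
\[
E_k(\tau)\,\partial_z^l\big|_{z=\alpha_D}\!\left(\frac{j'(z)}{E_k(z)(j(\tau)-j(z))}\right)=-l!\,(\tau-\alpha_D)^{-l-1}+\text{holomorphic}.
\]
Summing over $l$ produces explicit formulas for the Laurent coefficients $c_{-l-1}(f_{k,r,\alpha_D};\alpha_D)$ for $0\le l\le r$ (and $c_{-l-1}=0$ for $l>r$). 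Substituting these into Lemma~\ref{lem:FBResidue} and expanding $\bigl(\tfrac{xdx}{y}-\tfrac{E_2(\alpha_D)}{12}\,\tfrac{dx}{y}\bigr)^l$ binomially writes the residue as a polynomial in the de Rham basis $\omega^{k-2-i}\eta^i$, where $\omega=dx/y$ and $\eta=xdx/y$. I then pass to the Weierstrass frame by identifying $E_{\alpha_D}/\C\cong\C/(\Z+\Z\alpha_D)$ via $x=A\wp_{\alpha_D}(w)$ and $y=B\wp'_{\alpha_D}(w)$, with $A,B$ determined by matching the curve in Lemma~\ref{lem:FBResidue} to $(\wp')^2=4\wp^3-g_2\wp-g_3$ using $g_2=\tfrac{4\pi^4}{3}E_4(\alpha_D)$ and $g_3=\tfrac{8\pi^6}{27}E_6(\alpha_D)$. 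Under this identification $\omega$ becomes a constant multiple of $dw$ and $\eta$ a constant multiple of $\wp_{\alpha_D}(w)\,dw$, so the residue takes the form $P(\wp_{\alpha_D}(w))\,dw^{k-2}$ for some polynomial $P$ of degree at most $k-2$.

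The main obstacle is the final combinatorial simplification: proving that $P$ collapses (up to a multiplicative constant) to the linear polynomial $\frac{1}{\Im(\alpha_D)}-\frac{X}{\pi}-\frac{\pi}{3}E_2(\alpha_D)$, i.e.\ that all coefficients of $X^j$ for $j\ge 2$ vanish. The combination $\frac{1}{\Im(\alpha_D)}-\frac{\pi}{3}E_2(\alpha_D)=-\frac{\pi}{3}E_2^{\ast}(\alpha_D)$, where $E_2^{\ast}(\tau):=E_2(\tau)-\frac{3}{\pi\Im\tau}$ is the non-holomorphic modular completion of $E_2$, signals the correct organizing principle: the raising operator $R^r_{2-k}$ systematically produces almost-holomorphic modular forms, and each application of $R_{2-k}$ contributes a copy of $E_2^{\ast}(\alpha_D)$. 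I would carry out the simplification by grouping the combinatorial sum according to powers of $E_2^{\ast}(\alpha_D)$, using standard Pochhammer identities together with the classical relation between the Weierstrass $\wp$-function and Eisenstein series to eliminate the higher-order-in-$\wp$ contributions and fix the overall normalization.
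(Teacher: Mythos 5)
Your setup is the same as the paper's: rewrite $f_{k,r,\alpha_D}$ via Lemma~\ref{lem:EkE14-kRel}, expand $R^r_{2-k,z}$ with Lemma~\ref{lem:RaisingOpId}, extract the principal part with Lemma~\ref{lem:LaurentSeries} (only the $l=m$ term contributes to $c_{-m-1}$), and feed the Laurent coefficients into Lemma~\ref{lem:FBResidue} in the Weierstrass frame. The problem is the endgame, which you leave as a plan rather than a computation, and which is aimed at a claim that is false for $r\geq 2$. After passing to the frame $\left(\frac{dx}{y},\frac{xdx}{y}\right)\mapsto\left(dw,\ \wp_{\alpha_D}(w)\,dw\right)$ (up to constants), the residue is a polynomial in $\wp_{\alpha_D}(w)$ of degree exactly $r$, not of degree $\leq 1$: the $j=r$ term of Lemma~\ref{lem:FBResidue} contributes $\binom{k-2}{r}(2\pi i)^r c_{-r-1}\left(\frac{xdx}{y}-\frac{E_2(\alpha_D)}{12}\frac{dx}{y}\right)^{r}$ with $c_{-r-1}\neq 0$ and $r\leq k-2$, so the coefficient of $\wp^{r}$ cannot vanish, and no regrouping by powers of $E_2^{\ast}(\alpha_D)$ will kill the degree-$\geq 2$ terms. (Also, your bound $\deg P\leq k-2$ should be $\deg P\leq r$, since the pole has order $r+1$.)

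What actually happens, and what the paper's proof does, is that the sum over $m$ collapses by the identity $(2-k+m)_{r-m}=(-1)^{r-m}\frac{(k-2-m)!}{(k-2-r)!}$ together with the binomial theorem: the coefficient $\binom{k-2}{m}\frac{(2-k+m)_{r-m}}{(r-m)!}$ becomes $\frac{(k-2)!}{r!\,(k-2-r)!}\binom{r}{m}(-1)^{r-m}$, so that
$$
\Res(f_{k,r,\alpha_D})\doteq\left(\frac{1}{\Im(\alpha_D)}-\frac{1}{\pi}\wp_{\alpha_D}(w)-\frac{\pi}{3}E_2(\alpha_D)\right)^{r}dw^{k-2},
$$
i.e.\ the $r$-th power of the displayed linear form (the statement as printed omits the exponent $r$ and writes $E_2(w)$ for $E_2(\alpha_D)$; the power is forced, e.g.\ for $r=0$ the residue is $\doteq dw^{k-2}$, and it is exactly what is used afterwards, since by Lemma~\ref{lem:Exactness} the linear form $\delta(w)$ is a CM eigenvector, hence so is $\delta(w)^{r}dw^{k-2-r}$, giving the eigenvalue $p^r\mu_p^{k-2-2r}$ in Theorem~\ref{thm:SL2(Z)Basis}). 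So to complete your argument, drop the assertion that the $\wp^{j}$ coefficients vanish for $j\geq 2$ and instead perform this short Pochhammer-plus-binomial collapse; everything up to that point in your write-up is sound.
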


\begin{proof}
    By Lemmas~\ref{lem:RaisingOpId} and \ref{lem:LaurentSeries}, if $f_{k,r,\alpha_D}(\tau)=\sum\limits_{n\geq -r-1} c_n(f;w)(\tau-\alpha_D)^n$ and $0\leq m\leq r,$ we have that
    $$
    c_{-m-1}(f_{k,r,\alpha_D})=\frac{-1}{2\pi i}\cdot\frac{1}{(2\pi)^r}\cdot\frac{r!}{(r-m)!}(2-k+m)_{r-m}(2\Im(\alpha_D))^{m-r}\cdot i^m.
    $$
    By Lemma~\ref{lem:FBResidue}, we have that
    $$
    \Res(f_{k,r,\alpha_D})\doteq \left(\sum\limits_{m=0}^r\binom{k-2}{m}\frac{(2-k+m)_{r-m}}{(r-m)!}(2\Im(\alpha_D))^{m-r}\cdot\left(\frac{1}{2\pi}\mathlarger{\wp}_{\alpha_D}(w)+\frac{\pi}{6}E_2(w)\right)^m\right)dw^{k-2}.
    $$
    Moreover, we have that
    $$
    (2-k+m)_{r-m} = (-1)^{r-m}\cdot\frac{(k-2-m)!}{(k-2-r)!},
    $$
    and then the binomial theorem gives our claim.
\end{proof}

Finally, we show that this residue is an eigenvector of CM on $\Sym^{k-2}H^1_{dR}(E_{\alpha_D}/\C).$ This clearly follows from the following lemma.

\begin{lemma}\label{lem:Exactness}
    If $\alpha_D:=\frac{-b+\sqrt{-D}}{2a}\in\mathfrak{H}$ is a CM point and 
    $$
    \delta(w):=\left(\frac{1}{\Im(\alpha_D)}-\frac{1}{\pi}\mathlarger{\wp}_{\alpha_D}(w)-\frac{\pi}{3}E_2(w)\right)dw,
    $$
    then $\delta(a\alpha_Dw)-a\overline{\alpha_D}\cdot\delta(w)$
    is an exact differential form.
\end{lemma}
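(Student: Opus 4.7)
The plan is to produce an explicit meromorphic function $h$ on $E_{\alpha_D}=\C/(\Z+\Z\alpha_D)$ whose differential equals $\delta(a\alpha_D w)-a\overline{\alpha_D}\delta(w)$, where the first term is interpreted as the pullback $[a\alpha_D]^*\delta$. Let $\zeta_{\alpha_D}$ denote the Weierstrass zeta function of the lattice $\Z+\Z\alpha_D$, so that $\zeta_{\alpha_D}'(w)=-\wp_{\alpha_D}(w)$, with quasi-periods $\eta_1=\frac{\pi^2}{3}E_2(\alpha_D)$ along $1$ and $\eta_2=\eta_1\alpha_D-2\pi i$ along $\alpha_D$ (the Legendre relation). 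Using the chain rule for $\zeta_{\alpha_D}(a\alpha_D w)$ together with the identity $\frac{\pi}{3}E_2(\alpha_D)=\frac{\eta_1}{\pi}$, a direct calculation yields
$$\delta(a\alpha_D w)-a\overline{\alpha_D}\delta(w)=d\left[\frac{\zeta_{\alpha_D}(a\alpha_D w)-a\overline{\alpha_D}\zeta_{\alpha_D}(w)}{\pi}+2ia\left(1-\frac{\eta_1\Im(\alpha_D)}{\pi}\right)w\right].$$
Call the bracketed expression $h(w)$; the heart of the proof is to check that $h(w+\mu)=h(w)$ for $\mu\in\{1,\alpha_D\}$, so that $h$ descends to a meromorphic function on $E_{\alpha_D}$ and the right-hand side is genuinely an exact differential form.

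Under $w\mapsto w+1$, iterating the $1$-quasi-period $a$ times gives that the zeta-difference shifts by $a\eta_2-a\overline{\alpha_D}\eta_1=a\eta_1(\alpha_D-\overline{\alpha_D})-2\pi ia=2ia\eta_1\Im(\alpha_D)-2\pi ia$; after dividing by $\pi$, this exactly cancels the shift of $2ia(1-\eta_1\Im(\alpha_D)/\pi)w$. Under $w\mapsto w+\alpha_D$, the minimal polynomial $a\alpha_D^2=-b\alpha_D-c$ lets one rewrite $\zeta_{\alpha_D}(a\alpha_D w+a\alpha_D^2)=\zeta_{\alpha_D}(a\alpha_D w-b\alpha_D-c)$ and expand via the quasi-periods to obtain a shift of $-b\eta_2-c\eta_1$. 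Combining this with the shift $-a\overline{\alpha_D}\eta_2$ from the second $\zeta$-term, the norm relation $a\alpha_D\overline{\alpha_D}=c$, and the shift $2ia(1-\eta_1\Im(\alpha_D)/\pi)\alpha_D$ from the linear term, total cancellation reduces to the identity
$$2ia\alpha_D\Im(\alpha_D)=-b\alpha_D-2c,$$
which is immediate from $\alpha_D-\overline{\alpha_D}=2i\Im(\alpha_D)$ together with the minimal polynomial $a\alpha_D^2+b\alpha_D+c=0$.

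The main obstacle is the bookkeeping in the $\mu=\alpha_D$ step, where three distinct quasi-periodic contributions (from $\zeta_{\alpha_D}(a\alpha_D w+a\alpha_D^2)$, from $a\overline{\alpha_D}\zeta_{\alpha_D}(w+\alpha_D)$, and from the affine term) must conspire to cancel in a manner that encodes the CM-specific structure. Conceptually this cancellation reflects the fact that $\delta$ is cohomologous to $\frac{d\bar w}{\Im(\alpha_D)}$ in $H^1_{dR}(E_{\alpha_D};\C)$, for which $[a\alpha_D]^*d\bar w=a\overline{\alpha_D}\,d\bar w$ holds tautologically; this alternate viewpoint provides a convenient sanity check and explains, via Lemma~\ref{lem:ResSL2(Z)}, why $\Res(f_{k,r,\alpha_D})$ lands in the desired eigenspace of the CM action on $\Sym^{k-2}H^1_{dR}(E_{\alpha_D})$.
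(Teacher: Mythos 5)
Your proposal is correct and is essentially the paper's own argument in explicit form: the paper shows the residue-free form $\delta(a\alpha_D w)-a\overline{\alpha_D}\,\delta(w)$ has vanishing periods along $1$ and $\alpha_D$ (so its path integral defines an elliptic primitive), while you exhibit that primitive explicitly as $\frac{1}{\pi}\left(\zeta_{\alpha_D}(a\alpha_D w)-a\overline{\alpha_D}\zeta_{\alpha_D}(w)\right)+2ia\left(1-\frac{\eta_1\Im(\alpha_D)}{\pi}\right)w$ and verify its lattice-periodicity. Both verifications rest on the same inputs---$\eta_1=\frac{\pi^2}{3}E_2(\alpha_D)$, the Legendre relation, and the quadratic relations $a\alpha_D^2=-b\alpha_D-c$ and $a\alpha_D\overline{\alpha_D}=c$---so the computations coincide, with yours spelling out the final cancellation that the paper's ``putting all this together'' leaves implicit.
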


\begin{proof}

    Write $\Lambda:=\Z\oplus\Z\alpha_D.$ First, note that $\delta(a\alpha_Dw)$ is well-defined on $\C/\Lambda$ since $a\alpha_D\in\Lambda$ and $a\alpha_D\cdot\alpha_D=-b\alpha_D-c\in\Lambda.$
    To show that $\delta(a\alpha_Dw)-a\overline{\alpha_D}\delta(w)$ is an exact differential form, note that $\delta(w)$ has zero residues.    
    Therefore, fix $t_0\in\C\setminus\Lambda,$ and define
    $$
    G(t)=\int_{\gamma_t}\delta(a\alpha_Dw)-a\overline{\alpha_D}\delta(w)dw,
    $$
    where $\gamma_t$ is a path in $\C\setminus\Lambda$ from $t_0$ to $t.$ By the residue theorem, this is well-defined and is independent of the choice of the path $\gamma_t.$ 
    
    It remains to show that $G(t)$ is elliptic with respect to the lattice $\Lambda.$ This is equivalent to 
    $$\int_0^1\delta(a\alpha_Dw)-a\overline{\alpha_D}\delta(w)dw=\int_0^{\alpha_D}\delta(a\alpha_Dw)-a\overline{\alpha_D}\delta(w)dw =0.
    $$
    We have that (for example, see Lemma A1.3.9 of \cite{Katz-padic}) 
    $$
    \int_0^1\wp_{\alpha_D}(w)dw=-\frac{\pi^2}{3}E_2(\alpha_D).
    $$
    Moreover, the Legendre relation (for example, see A1.3.4 of \cite{Katz-padic}) gives us that
    $$
    \int_0^1\wp_{\alpha_D}(\alpha_Dw)d(\alpha_Dw)=\int_0^{\alpha_D}\wp_{\alpha_D}(w)dw = 2\pi i +\alpha_D\int_0^1\wp_{\alpha_D}(w)dw.
    $$
    Putting all this together, we have our claim.
\end{proof}

\section{Proofs of Theorems~\ref{thm:main-ASD},\ref{thm:Gamma1(4)Basis}, \ref{thm:SL2(Z)Basis} and Corollary~\ref{cor:ASD-basis}}\label{sec:proofs}

\begin{proof}[Proof of Theorem~\ref{thm:main-ASD}]
    Proposition~\ref{prop:main-ASD-weak} gives the recurrence relation and Corollary~\ref{cor:jfp1} and (3) of Proposition~\ref{prop:extraP} give the formula for $j_{f,p}.$
\end{proof}

\begin{proof}[Proof of Corollary~\ref{cor:ASD-basis}]
    This follows from Theorem~\ref{thm:main-ASD}, Lemma~\ref{lem:indDecomp}, and Lemma~\ref{lem:indDecompOrd}.
\end{proof}

\begin{proof}[Proof of Theorem~\ref{thm:Gamma1(4)Basis}]
    This follows from the proof of Corollary~\ref{cor:ASD-basis}, Lemma~\ref{lem:Wt3Gamma1(4)Res}, Lemma~\ref{lem:Wt4Gamma1(4)Res}, 1.3 of \cite{Sch88}, and Theorem 1 of \cite{HLLT}. 
\end{proof}

\begin{proof}[Proof of Theorem~\ref{thm:SL2(Z)Basis}]
    This follows from the proof of Corollary~\ref{cor:ASD-basis}, Lemma~\ref{lem:ResSL2(Z)}, Lemma~\ref{lem:Exactness}, 1.3 of \cite{Sch88}, and Theorem 1 of \cite{HLLT}.
\end{proof}

\section{Examples and Related Discussions}\label{sec:ExRD}

In this section, we discuss some of the material in the previous sections.

\subsection{\texorpdfstring{Diagonalizing $H^1_{dR}$ for CM elliptic curves}{Diagonalizing de Rham cohomology for CM elliptic curves}}\label{ss:Diagonalizing-CM-EC}

Here we explain how to obtain the $c_1$ and $c_2$ in Theorem~\ref{thm:Gamma1(4)Basis} for a given CM $E_u$. Besides the algorithm we provide, they can be computed using singular values of modular forms and quasi-modular forms as in \cite{BBRSTT} which applies when $X$ is a modular curve or a Shimura curve (see the set up by Yang in \cite{Yang-Schwarzian}). We have implemented the algorithm given below in \texttt{Sagemath}. Moreover, we note that this is related to a classic problem of finding Ramanujan-type formulas for $\frac{1}{\pi}$ originating in the work of Chudnovsky--Chudnovsky \cite{Chu-Chu} and Borwein--Borwein \cite{BB} (see \cite{Zudilin-2nd} for an overview of this).  We provide an example consequence of this algorithm.

For simplicity, in what follows, we suppose that $K$ is an algebraically closed field of characteristic $0.$ The de Rham cohomology of an elliptic curve $E/K$ in Weierstrass form
$$
E:\ \ \ Y^2 = X^3 + AX + B
$$
can be described as the $K$-vector space spanned by $\frac{dX}{Y}$ and $\frac{XdX}{Y}$ modulo exact forms (for example, see A1.2.3 of \cite{Katz-padic}).  

We first determine the effect of an isogeny on $\frac{XdX}{Y}.$

\begin{prop}\label{prop:IsogenyActionDeRham}
    Consider two elliptic curves $E/K,E'/K$ defined by the equations
    $$
    E:\ \ \ y^2=x^3+Ax+B
    $$
    and
    $$
    E':\ \ \ Y^2=X^3+\tilde A X+\tilde B.
    $$
    If $\phi:E\to E'$ is an isogeny with $\phi^\ast(\frac{dX}{Y}) = \alpha_\phi\frac{dx}{y},$ then we have
    $$
    \phi^\ast\left(\frac{XdX}{Y}\right) = \frac{\deg\phi}{\alpha_\phi}\frac{xdx}{y}-\frac{1}{\alpha_\phi}\left(\sum\limits_{P\in\ker\phi\setminus\{0\}}x(P)\right)\frac{dx}{y} + \text{ exact form}.
    $$
\end{prop}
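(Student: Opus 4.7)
The approach is to write $\phi^*(XdX/Y)$ as a sum of translates of $xdx/y$ (plus a holomorphic correction) and then use translation-invariance on $H^1_{dR}(E)$ to collapse the sum. First, using $\phi^*(dX/Y)=\alpha_\phi\,dx/y$ and the translation-invariance of $dx/y$, one has
\[
\phi^*\!\left(\frac{XdX}{Y}\right) = \alpha_\phi (X\circ\phi)\cdot\frac{dx}{y},\qquad \tau_{-P}^*\!\left(\frac{xdx}{y}\right) = x(Q-P)\cdot\frac{dx}{y},
\]
both of which have possible poles only along $\ker\phi$. Set
\[
\omega := \phi^*\!\left(\frac{XdX}{Y}\right) - \frac{1}{\alpha_\phi}\sum_{P\in \ker\phi}\tau_{-P}^*\!\left(\frac{xdx}{y}\right).
\]

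The first key step is to show $\omega$ is globally holomorphic on $E$, hence equals $\beta\cdot dx/y$ for some constant $\beta\in K$. At each $P_0\in\ker\phi$, pick a local parameter $t$ with $dx/y=(1+O(t^4))\,dt$; since $\phi$ is \'etale at $P_0$ with $\phi^*(dX/Y)=\alpha_\phi\,dx/y$, the pullback $s:=\phi^*(z)$ of the analogously normalized parameter $z$ at $0_{E'}$ satisfies $s=\alpha_\phi t + O(t^5)$. A direct Laurent expansion then shows that $\phi^*(XdX/Y)$ has leading term $\alpha_\phi^{-1}t^{-2}\,dt$ and $\tau_{-P_0}^*(xdx/y)$ has leading term $t^{-2}\,dt$ at $P_0$, while the other translates $\tau_{-P}^*(xdx/y)$ for $P\neq P_0$ are holomorphic at $P_0$. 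Thus the principal parts cancel after the prefactor $\alpha_\phi^{-1}$ on the sum, so $\omega$ is holomorphic and therefore a constant multiple of $dx/y$.

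Next, one uses that translations act trivially on $H^1_{dR}(E)$: any morphism from the complete variety $E$ to the affine algebraic group $\mathrm{GL}(H^1_{dR}(E))$ is constant, so each $\tau_{-P}^*(xdx/y)\equiv xdx/y$ modulo exact forms. Summing over the $\deg\phi$ elements of $\ker\phi$ and combining with $\omega=\beta\,dx/y$ yields
\[
\phi^*\!\left(\frac{XdX}{Y}\right) \equiv \frac{\deg\phi}{\alpha_\phi}\cdot\frac{xdx}{y} + \beta\cdot\frac{dx}{y}\pmod{\text{exact}}.
\]

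To pin down $\beta$, I would expand both sides of the exact equality $\omega=\beta\,dx/y$ in a Laurent series in the normalized parameter $z$ at $0\in E$. The Weierstrass form of $E'$ (no $X^2$ term) gives $X=z^{-2}+O(z^2)$ with vanishing constant term; combined with $s=\alpha_\phi z+O(z^5)$, this forces $\phi^*(XdX/Y)$ to have no $dz$ constant term at $0$. On the right, the $P=0$ summand contributes only the pole part, while each $P\neq 0$ contributes $\alpha_\phi^{-1}x(-P)\,dz=\alpha_\phi^{-1}x(P)\,dz$ to the constant term (using $x(-P)=x(P)$). Matching constant $dz$-terms gives $\beta = -\alpha_\phi^{-1}\sum_{P\neq 0}x(P)$, completing the proof. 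The main obstacle is the bookkeeping in this final step---specifically, verifying that $\phi^*(XdX/Y)$ has no constant $dz$-term at $0$, which hinges on the Weierstrass normalization together with the vanishing of the cubic correction in $s=\alpha_\phi z + c_3 z^3 +\ldots$, itself forced by compatibility of the normalized invariant differentials under $\phi^*$.
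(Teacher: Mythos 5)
Your argument is correct (over the algebraically closed, characteristic-zero $K$ of this section, so that $\phi$ is \'etale on its kernel and $\#\ker\phi=\deg\phi$), but it takes a genuinely different route from the paper. The paper first reduces to three basic cases via Lemma~\ref{lem:IsogenyDecomposition} (isomorphisms, cyclic isogenies, multiplication by $m$) and treats each explicitly: the cyclic case through V\'elu's formulas together with the explicitly exhibited exact form $d\bigl(\sum_{Q}\frac{2y}{x-x_Q}\bigr)$, and the multiplication-by-$m$ case through translation invariance plus the division-polynomial identity $\sum_{Q\in E[m]\setminus\{O\}}x(Q)=0$. You instead handle an arbitrary isogeny uniformly: subtract the kernel trace $\frac{1}{\alpha_\phi}\sum_{P\in\ker\phi}\tau_{-P}^{*}(x\,dx/y)$, verify from the normalized parameters (no $z^{-1},z^{0},z^{1}$ terms in the expansion of the $x$-coordinate of a short Weierstrass model, and $s=\alpha_\phi t+O(t^5)$) that the difference is a global holomorphic differential $\beta\,dx/y$, invoke triviality of translations on $H^1_{dR}$, and read off $\beta$ by matching constant terms at the origin---essentially the classical V\'elu/Weierstrass summation identity recast in de Rham terms, which avoids both the decomposition lemma and the division-polynomial computation. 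Two small points to tighten: the assertion that $P\mapsto\tau_P^{*}$ is an algebraic map from the complete curve $E$ to $\GL(H^1_{dR}(E))$ deserves justification; the cheapest fix is the paper's own one-line computation $\tau_Q^{*}\!\left(\frac{x\,dx}{y}\right)-\frac{x\,dx}{y}=d\!\left(\frac{-2y_Q}{x_Q-x}\right)+d\!\left(\frac{-2y}{x-x_Q}\right)$, which you could simply quote instead. Also, your simultaneous normalizations $dx/y=(1+O(t^4))\,dt$ and $x=t^{-2}+O(t^2)$ are off by a harmless constant (the $dx/y$ versus $dx/2y$ issue), but since the same convention is applied to both $E$ and $E'$ the constant cancels in the comparison of principal parts and of constant terms, so the conclusion is unaffected. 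The trade-off: the paper's case-by-case proof is fully explicit and feeds directly into Algorithm~\ref{algorithm:H1dRDiag}, where V\'elu's formulas are used anyway, while your proof is shorter, uniform in $\phi$, and makes the shape of the formula (trace over the kernel plus a constant fixed by the vanishing constant term of $X$ at the origin) conceptually transparent.
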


To prove Proposition~\ref{prop:IsogenyActionDeRham}, we use the following well-known lemma to decompose isogenies into the three ``basic'' types.

\begin{lemma}\label{lem:IsogenyDecomposition}
    Every isogeny can be written as a composition of isomorphisms, isogenies with cyclic kernels (cyclic isogenies), and multiplication by integers.
\end{lemma}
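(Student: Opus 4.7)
The plan is to analyze the structure of $\ker\phi$ as a finite abelian group and then factor $\phi$ accordingly. Since $K$ is assumed to be of characteristic zero, every isogeny is separable, so $\ker\phi$ is an étale group scheme, identifiable with the finite abelian group $\ker\phi(K)$ sitting inside $E(K)$.

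First, I would apply the classification of finite abelian subgroups of $E(K)$. Using the structure theorem for finite abelian groups together with the fact that for every integer $N\geq 1$ the full $N$-torsion satisfies $E[N]\cong (\Z/N\Z)^2$, one obtains that $\ker\phi\cong \Z/n\Z\times\Z/m\Z$ for unique integers $n\mid m$. In particular, $E[n]\subseteq \ker\phi$, and the quotient $\ker\phi/E[n]$ is cyclic of order $m/n$.

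Next, I would use the universal property of the quotient by a finite subgroup scheme: since $E[n]\subset\ker\phi$, the isogeny $\phi$ factors as
\begin{equation*}
\phi:\ E\xrightarrow{\,\pi\,} E/E[n]\xrightarrow{\,\psi\,} E',
\end{equation*}
where $\ker\psi\cong \ker\phi/E[n]$ is cyclic. The quotient $E/E[n]$ is canonically isomorphic to $E$ via an isomorphism under which $\pi$ becomes the multiplication-by-$n$ map $[n]$. Composing $\psi$ with this isomorphism realizes $\phi$ as a composition of an isomorphism, $[n]$, and a cyclic isogeny $\psi$.

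The main obstacle is the structural classification of $\ker\phi$ as $\Z/n\Z\times\Z/m\Z$ with $n\mid m$; this ultimately rests on the two-dimensional nature of $N$-torsion on an elliptic curve, which holds for us because we are in characteristic zero. Once that structural step is in hand, the factorization via the universal property is formal, and identifying $E/E[n]$ with $E$ through an isomorphism so that the quotient map becomes $[n]$ is standard. In positive characteristic one would need the additional step of stripping off powers of Frobenius to reduce to the separable case, but this does not arise here.
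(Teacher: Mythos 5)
Your argument is correct and is essentially the paper's own proof: the paper simply cites the structure theorem for finite abelian groups together with Silverman III.4.10 and III.4.12, which is exactly the factorization you carry out — writing $\ker\phi\cong\Z/n\Z\times\Z/m\Z$ with $n\mid m$, noting $E[n]\subseteq\ker\phi$, factoring through $[n]$ (equivalently through $E/E[n]$ identified with $E$ by an isomorphism), and leaving a cyclic isogeny. No gaps; the details you supply (that the $n$-torsion of the kernel is all of $E[n]$, and that the residual kernel is cyclic) are the intended ones.
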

\begin{proof}
    This follows directly from Theorem III.4.10 and Proposition III.4.12 of \cite{Silverman} and from the structure theorem for finite abelian groups.
\end{proof}

We now prove Proposition~\ref{prop:IsogenyActionDeRham} for each of the basic types.

\begin{lemma}\label{lem:IsomorphismActionDeRham}
    Proposition~\ref{prop:IsogenyActionDeRham} is true when $\phi$ is an isomorphism.
\end{lemma}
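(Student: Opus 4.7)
The plan is to reduce to the explicit classification of isomorphisms between elliptic curves in short Weierstrass form. Since $K$ has characteristic $0$, every isomorphism $\phi: E \to E'$ between the two given Weierstrass models is a substitution of the form $X = u^2 x$, $Y = u^3 y$ for some $u \in K^\ast$, with $\tilde A = u^4 A$ and $\tilde B = u^6 B$ (see, for example, Proposition III.3.1(b) of \cite{Silverman}). So once we plug this in, everything is a direct computation.

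From the substitution, I compute $\phi^\ast(\frac{dX}{Y}) = \frac{u^2 \, dx}{u^3 y} = u^{-1} \cdot \frac{dx}{y}$, which identifies $\alpha_\phi = u^{-1}$. In the same way, $\phi^\ast(\frac{XdX}{Y}) = \frac{(u^2 x)(u^2 \, dx)}{u^3 y} = u \cdot \frac{x\,dx}{y} = \frac{1}{\alpha_\phi} \cdot \frac{x\,dx}{y}.$ Since $\phi$ is an isomorphism, $\deg\phi = 1$ and $\ker\phi \setminus \{0\} = \emptyset$ (so the sum in the statement is $0$), and the claimed formula holds on the nose, with zero exact-form correction. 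There is no genuine obstacle: the only content is invoking the classification of Weierstrass isomorphisms and substituting.
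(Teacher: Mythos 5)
Your proof is correct and matches the paper's argument: both reduce to the standard fact that an isomorphism between short Weierstrass models in characteristic $0$ is $(x,y)\mapsto(u^2x,u^3y)$, read off $\alpha_\phi=u^{-1}$, and verify the formula by direct substitution with trivial degree and empty kernel sum. Nothing further is needed.
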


\begin{proof}
    In this situation, there exists $u\in K^\times$ such that $X=u^2x, Y=u^3y,$  and $\phi(x,y)=(X,Y).$ The formula follows trivially with $\alpha_\phi=\frac 1u.$
\end{proof}

\begin{lemma}\label{lem:cyclicActionDeRham}
    Proposition~\ref{prop:IsogenyActionDeRham} is true when $\phi$ is cyclic.
\end{lemma}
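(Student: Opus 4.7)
The strategy is to reduce to the normalization given by V\'elu's formula and then exploit translation invariance of $H^1_{dR}(E)$. A cyclic isogeny $\phi\colon E \to E'$ with kernel $C$ factors as $\phi = \iota \circ \phi_V$, where $\phi_V\colon E \to E^V$ is V\'elu's isogeny (satisfying $\phi_V^*(dX/Y) = dx/y$, so that $\alpha_{\phi_V}=1$) and $\iota\colon E^V \to E'$ is an isomorphism of short Weierstrass forms. Since Lemma~\ref{lem:IsomorphismActionDeRham} already handles isomorphisms and $\phi^* = \phi_V^* \circ \iota^*$, it suffices to prove the formula for $\phi_V$. The general case then follows by tracking the $u$-twist $\iota(X,Y) = (u^2 X, u^3 Y)$: this yields $\alpha_\phi = 1/u$, and multiplying the $\phi_V$-formula by $u$ converts $n\eta - (\sum x(Q))\omega$ into $(n/\alpha_\phi)\eta - (1/\alpha_\phi)(\sum x(Q))\omega$, as required.

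For V\'elu's isogeny one has the explicit formula
\[
\phi_V^* X \;=\; x \;+\; \sum_{Q \in C \setminus \{0\}} \bigl( x(P + Q) - x(Q) \bigr),
\]
which follows either from V\'elu's original work or from the classical decomposition $\wp_{\Lambda'}(z) = \wp_\Lambda(z) + \sum_{Q \in C \setminus \{0\}} (\wp_\Lambda(z - Q) - \wp_\Lambda(Q))$ when $E = \C/\Lambda$ and $E^V = \C/\Lambda'$ with $C = \Lambda'/\Lambda$. Multiplying by $\phi_V^*(dX/Y) = dx/y$ gives
\[
\phi_V^*\!\left(\frac{X\,dX}{Y}\right) \;=\; \eta \;+\; \sum_{Q \in C \setminus \{0\}} \tau_Q^* \eta \;-\; \Bigl( \sum_{Q \in C \setminus \{0\}} x(Q) \Bigr) \omega,
\]
where $\omega = dx/y$, $\eta = x\,dx/y$, and the translation invariance $\tau_Q^* \omega = \omega$ allows the identification $\tau_Q^* \eta = x(P + Q)\,dx/y$.

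The main obstacle is the identity $\tau_Q^* \eta \equiv \eta \pmod{\text{exact}}$ for every $Q \in E$. The cleanest justification is a rigidity argument: the assignment $Q \mapsto \tau_Q^*$ defines a morphism from the proper connected variety $E$ into the affine group scheme $\GL(H^1_{dR}(E))$, whose image is therefore a single point, equal to the identity at $Q = 0$. Alternatively, one exhibits an explicit primitive for $\tau_Q^*\eta - \eta$ via the descent of $2(\zeta(z + Q) - \zeta(z))$ from $\C$ to $E$, whose well-definedness follows from the Legendre relation; a purely algebraic primitive can also be constructed from the function $(y + y_Q)/(x - x_Q)$ together with a multiple of $x$, via a direct computation using the addition law and $2y\,dy = (3x^2 + A)dx$. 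Applying this identity term by term in the previous display yields $\phi_V^*(X\,dX/Y) \equiv n\,\eta - \bigl(\sum_{Q \neq 0} x(Q)\bigr)\omega$ modulo exact forms, which combined with the isomorphism reduction completes the proof.
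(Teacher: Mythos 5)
Your proof is correct, and while it shares the paper's overall skeleton (reduce to V\'elu's normalized isogeny via the isomorphism case, which is legitimate since every isomorphism of short Weierstrass models has the form $(X,Y)\mapsto(u^2X,u^3Y)$ and your bookkeeping of the factor $u$ matches the stated formula), the key computation is handled differently. The paper stays with the rational-function form of V\'elu's formula, $r(x)=x+\sum_Q\left(\tfrac{t_Q}{x-x_Q}+\tfrac{u_Q}{(x-x_Q)^2}\right)$, and verifies exactness of the whole discrepancy at once by exhibiting the explicit primitive $\sum_{Q}\tfrac{2y}{x-x_Q}$. You instead use the additive form $\phi_V^\ast X=x+\sum_{Q\neq O}\bigl(x(P+Q)-x(Q)\bigr)$, recognize $x(P+Q)\tfrac{dx}{y}=\tau_Q^\ast\eta$ via invariance of $\omega$, and reduce everything to the single statement that translations act trivially on $H^1_{dR}(E)$. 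That statement is exactly what the paper proves and uses in the next lemma (the multiplication-by-$m$ case), so your argument effectively unifies Lemmas~\ref{lem:cyclicActionDeRham} and~\ref{lem:scalarActionDeRham} around one conceptual fact; the cost is that you must justify $\tau_Q^\ast\eta\equiv\eta$ independently, and of your three justifications the rigidity one silently uses the (true but unproved) algebraicity of $Q\mapsto\tau_Q^\ast$, while the analytic one needs no Legendre relation --- the quasi-period contributions of $\zeta(z)-\zeta(z+q)$ cancel identically. Two small corrections to your parenthetical algebraic primitive: the correct primitive is $\dfrac{-2(y-y_Q)}{x-x_Q}$ (note the sign $-y_Q$, poles at $O$ and $-Q$, matching $\zeta(z)-\zeta(z+q)$), and no additional multiple of $x$ is needed; this is also what the paper's displayed identity in the proof of Lemma~\ref{lem:scalarActionDeRham} amounts to after absorbing the exact term $d\bigl(\tfrac{2y_Q}{x-x_Q}\bigr)$. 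These are cosmetic slips, not gaps: the proof as structured is sound.
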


\begin{proof}
    $\phi$ is unique up to isomorphism of the codomain $E'.$ Therefore, it suffices to consider the situation (see \cite{VeluFormulas}) with
    \begin{enumerate}
        \item $\ker\phi =: G$ is cyclic.
        \item For $Q\in G, Q=:(x_Q,y_Q),$ define $t_Q:=3x_Q^2+A, u_Q:=2y_Q^2, w_Q:=u_Q+t_Qx_Q.$
        \item Define $t:=\sum\limits_{Q\in G\setminus\{O\}}t_Q, w:=\sum\limits_{Q\in G\setminus\{O\}} w_Q,$
        where $O$ is the origin of the elliptic curve.
        \item Define
        $$
        r(x):= x+\sum\limits_{Q\in G\setminus\{O\}}\frac{t_Q}{x-x_Q}+\frac{u_Q}{(x-x_Q)^2}.
        $$
        \item Define $\tilde{A}:=A-5t, \tilde{B}=B-7w.$
        \item $\phi(x,y)=(r(x), r'(x)y).$ 
    \end{enumerate}
    In this situation, we clearly have that $\phi^\ast\left(\frac{dX}{Y}\right)=\frac{dx}{y}.$ Our claim reduces to showing that
    $$
    (\#G - 1)\frac{xdx}{y} + \left(-\sum\limits_{Q\in G\setminus\{O\}} x_Q-\frac{t_Q}{x-x_Q}-\frac{u_Q}{(x-x_Q)^2}\right)\frac{dx}{y}
    $$
    is an exact form. Using the fact that
    $$
    y^2-y_Q^2 = x^2+x_Qx+x_Q^2,
    $$
    a straightforward computation shows that
    $$
    d\left(\sum\limits_{Q\in G\setminus\{O\}}\frac{2y}{x-x_Q}\right) =     (\#G - 1)\frac{xdx}{y} + \left(-\sum\limits_{Q\in G\setminus\{O\}} x_Q-\frac{t_Q}{x-x_Q}-\frac{u_Q}{(x-x_Q)^2}\right)\frac{dx}{y},
    $$
    and our claim holds.
\end{proof}

\begin{lemma}\label{lem:scalarActionDeRham}
    Proposition~\ref{prop:IsogenyActionDeRham} is true when $\phi$ is multiplication-by-$m$ with $m\in\Z.$
\end{lemma}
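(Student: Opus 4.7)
The plan is to reduce the scalar case $[m]$ to the cyclic isogeny case already handled in Lemma~\ref{lem:cyclicActionDeRham}, by factoring $[m]$ through a V\'elu-form quotient. My starting observation is that the explicit computation in the proof of Lemma~\ref{lem:cyclicActionDeRham}—in particular the identity $d\bigl(\sum_{Q} 2y/(x-x_Q)\bigr) = \cdots$—is carried out term-by-term over $Q \in G \setminus \{O\}$ using only V\'elu's definitions of $t_Q$ and $u_Q$ and polynomial division by $(x - x_Q)^i$; nowhere is cyclicity of $G$ actually invoked. Consequently Lemma~\ref{lem:cyclicActionDeRham} holds for any finite subgroup $G \subset E$, and in particular for $G = E[m]$ (with $2$-torsion handled automatically since then $u_Q = 2y_Q^2 = 0$).

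With this in hand, I factor $[m] = \iota \circ \pi$, where $\pi\colon E \to E/E[m]$ is the quotient isogeny in V\'elu form and $\iota\colon E/E[m] \to E$ is the unique isomorphism making the composition equal to $[m]$ (its existence follows from $E/E[m]$ having the same $j$-invariant as $E$). Applying the extended Lemma~\ref{lem:cyclicActionDeRham} to $\pi$, with $\alpha_\pi = 1$, $\deg \pi = m^2$, and $\ker \pi = E[m]$, yields
\[
\pi^*\!\left(\frac{XdX}{Y}\right) \;=\; m^2\,\frac{xdx}{y} \;-\; \Bigl(\sum_{P \in E[m]\setminus\{O\}} x(P)\Bigr)\,\frac{dx}{y} \;+\; (\text{exact}).
\]
For $\iota$, the normalizations $[m]^*(dx/y) = m\cdot(dx/y)$ and $\pi^*(dX/Y) = dx/y$ force $\iota^*(dx/y) = m\,(dX/Y)$, which in the notation of Lemma~\ref{lem:IsomorphismActionDeRham} corresponds to $u = 1/m$, so $\iota^*(xdx/y) = \tfrac{1}{m}(XdX/Y)$ (with no exact correction, since $\iota$ is an isomorphism). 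Composing the two pullbacks:
\[
[m]^*\!\left(\frac{xdx}{y}\right) \;=\; \pi^*\iota^*\!\left(\frac{xdx}{y}\right) \;=\; \tfrac{1}{m}\,\pi^*\!\left(\frac{XdX}{Y}\right) \;=\; m\,\frac{xdx}{y} \;-\; \tfrac{1}{m}\Bigl(\sum_{P \in E[m]\setminus\{O\}} x(P)\Bigr)\frac{dx}{y} \;+\; (\text{exact}),
\]
matching the proposition's formula since $\deg[m]/\alpha_{[m]} = m^2/m = m$ and $1/\alpha_{[m]} = 1/m$.

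The main obstacle, and the only delicate point, is the opening claim that the proof of Lemma~\ref{lem:cyclicActionDeRham} extends verbatim to arbitrary finite kernels; this amounts to re-inspecting V\'elu's formulas for $r(x)$, $\tilde A$, $\tilde B$ and the derivation of $d\bigl(\sum_Q 2y/(x - x_Q)\bigr)$ to confirm that no step uses cyclicity or avoids $2$-torsion. Should one prefer to sidestep this extension, an equivalent route is to further factor $\pi$ according to a composition series of $E[m] \cong (\Z/m\Z)^2$ into two genuinely cyclic isogenies of degree $m$ and verify the resulting combinatorial identity $\sum_{P \in E[m]\setminus\{O\}} x(P) = m\sum_{Q \in C\setminus\{O\}} x(Q) + \sum_{T \in (\ker\psi)\setminus\{O\}}\sum_{P \in \phi^{-1}(T)} x(P)$ by partitioning $E[m]$ into fibers of the first factor; either route reaches the stated conclusion.
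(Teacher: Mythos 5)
Your proof is correct, but it takes a genuinely different route from the paper's. The paper treats $[m]$ directly on $E$: it shows $\tau_Q^\ast(x\,dx/y)-x\,dx/y$ is exact, imports the argument of Silverman to get $[m]^\ast(x\,dx/y)=m\,\tfrac{x\,dx}{y}+(\text{exact})$, and is then obliged to prove separately that $\sum_{Q\in E[m]\setminus\{O\}}x(Q)=0$, which it does via the subleading coefficient of the division polynomials and an induction on their recurrences. You instead factor $[m]=\iota\circ\pi$ with $\pi$ the V\'elu quotient by $E[m]$ and $\iota$ an isomorphism, resting on the observation that the computation in Lemma~\ref{lem:cyclicActionDeRham} never uses cyclicity. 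That observation is right: the summed-over-all-points form of V\'elu's formulas used there, with $t_Q=3x_Q^2+A$ and $u_Q=2y_Q^2$, is valid for an arbitrary finite subgroup (order-$2$ points enter once with $u_Q=0$, consistent with V\'elu's representative convention), and the exactness identity $d\bigl(\sum_Q 2y/(x-x_Q)\bigr)=\cdots$ is a term-by-term computation in $Q$. Your normalization bookkeeping ($\alpha_\pi=1$, $\iota^\ast(dx/y)=m\,dX/Y$, hence $u=1/m$ in Lemma~\ref{lem:IsomorphismActionDeRham}, so $\iota^\ast(x\,dx/y)=\tfrac1m\,X\,dX/Y$) is correct, and the composite reproduces the formula of Proposition~\ref{prop:IsogenyActionDeRham} with the kernel-sum term appearing of its own accord, so you never need to know that this sum vanishes. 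What each approach buys: the paper's argument stays on the single curve but must do the division-polynomial induction; yours eliminates that computation entirely (indeed, comparing your formula with the paper's gives $\sum_{Q\in E[m]\setminus\{O\}}x(Q)=0$ as a corollary rather than an input), at the cost of re-checking that the V\'elu computation of Lemma~\ref{lem:cyclicActionDeRham} applies to non-cyclic kernels, which it does.

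Two small repairs. First, the existence of $\iota$ with $\iota\circ\pi=[m]$ should be justified by the kernel-containment factorization criterion ($\ker\pi=E[m]\subseteq\ker[m]$, so $[m]$ factors through $\pi$ with complementary degree $1$; e.g.\ Corollary III.4.11 of \cite{Silverman}), not by equality of $j$-invariants, which only produces some isomorphism $E/E[m]\cong E$ unrelated to $[m]$. Second, your fallback ``composition series'' identity is false as written: the V\'elu coordinate $X'(T)$ on the intermediate quotient is not $\sum_{P\in\phi^{-1}(T)}x(P)$ (it differs from the fiber sum by the constant $\sum_{Q\in C\setminus\{O\}}x(Q)$), and your displayed identity would force $\sum_{Q\in C\setminus\{O\}}x(Q)=0$ for every cyclic $C$, which already fails for a $2$-isogeny kernel. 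Since that remark is only an optional detour and your main argument does not use it, the proof stands as given once the first point is phrased via the factorization criterion.
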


\begin{proof}
    First, if $Q\in E(K),$ we denote by $\tau_Q$ the translation-by-$Q$ on $E.$ A straightforward computation shows that
    $$
    \tau_Q^\ast\left(\frac{xdx}{y}\right) = \frac{xdx}{y}+ \frac{-2yy_Q}{(x_Q-x)^2}\frac{dx}{y}+d\left(\frac{-2y}{x-x_Q}\right)
    $$
    and therefore, 
    $$
    \tau_Q^\ast\left(\frac{xdx}{y}\right) - \frac{xdx}{y}
    $$
    is an exact differential form. The proof of Theorem 5.2 of \cite{Silverman} shows that
    $$
    [m]^\ast\left(\frac{xdx}{y}\right) = m\frac{xdx}{y} + \text{ exact form}.
    $$
    Therefore, it remains to show that 
    \begin{equation}\label{kerSum}
        \sum\limits_{Q\in E[m]\setminus\{O\}} x(Q) = 0.
    \end{equation}
    For $m=0,-1,1,2,$ this is clear. Furthermore, since $[-m]=[-1]\circ[m]$ and $x(-P)=x(P),$ it suffices to show that (\ref{kerSum}) holds for $m\geq 3.$

    If $m$ is odd, then the $x(Q)'s$ are precisely the roots of the $m$-th division polynomial $\psi_m.$ If $m$ is even, then the $x(Q)'s$ are precisely the roots of $\frac{1}{y}\psi_m.$ Therefore, our claim is equivalent to showing that
    $$
    [x^{\deg\psi_{2n+1}-1}]\psi_{2n+1}=0
    $$
    and
    $$
    [x^{\deg(\frac{\psi_{2n}}{y})-1}]\frac{\psi_{2n}}{y}=0,
    $$
    where $[x^l]f$ denotes the coefficient of $x^l$ in $f\in K[x].$
    Now, define
    $$
    \varphi_m(x) = x^{\deg\psi_m}\psi_m\left(\frac{1}{x}\right).
    $$
    Our claim is then equivalent to showing that
    $$
    \frac{d}{dx}\left(\frac{\phi_m}{y^{\delta}}\right)\Bigg|_{x=0}=0,
    $$
    where $\delta = 1$ if $m$ is even and $0$ if $m$ is odd. For $m=0,1,2,3,4,$ this is a direct computation. The claim then follows from induction and the recurrence relations of division polynomials.
\end{proof}

\begin{proof}[Proof of Proposition~\ref{prop:IsogenyActionDeRham}]
    This follows directly from Lemmas~\ref{lem:IsogenyDecomposition} through \ref{lem:scalarActionDeRham}.
\end{proof}

We now provide an algorithm for computing the eigenbasis of a CM action on the de Rham cohomology $H^1_{dR}(E)$ of an elliptic curve $E.$

\begin{algorithm}\label{algorithm:H1dRDiag}
Given an elliptic curve $E/K$ which admits complex multiplication, the following algorithm gives the eigenbasis under complex multiplication.  \begin{enumerate}
    \item Write the elliptic curve $E$ in Weierstrass form while keeping track of the isomorphisms.
    \item Determine the CM order $\O$ of $E$ (for example, see \cite{CremonaSutherland}).
    \item Choose a rational prime $p$ such that $p=N_{L/\Q}(a+b\pi),$ where $a,b\in\Z, L$ is the CM field, and $\pi$ is a generator of $\O.$
    \item Using V\'elu's algorithm (see \cite{VeluFormulas}), find the $p$-isogeny $\phi$ whose codomain has $j$-invariant equal to $j(E).$ This is an action by an element of $\O.$
    \item Use Proposition~\ref{prop:IsogenyActionDeRham} to compute the effect of the CM action on $\frac{dX}{Y}$ and $\frac{XdX}{Y}.$
    \item Diagonalize with respect to the action of $\phi.$
    \item Since $\End(E)$ is commutative, this basis is diagonal with respect to all CM actions.
    \item Use the isomorphisms in (1) to obtain the basis in original form.
\end{enumerate}
\end{algorithm}

We illustrate with an example.

\begin{example*}\label{ex:c1c2E_2Gamma1(4)}
    Consider the elliptic curve $E_2$ where $E_u$ is as in Theorem~\ref{thm:Gamma1(4)Basis}. $E_2$ has complex multiplication by $\Z[2i]$ and we have that $\frac{dx}{y}$ and $\frac{dx}{y}+4\frac{xdx}{y}$ form an eigenbasis of $H_{dR}^1(E_u/\C)$ for the CM action. Plugging this into (1) and (2) of Theorem~\ref{thm:Gamma1(4)Basis}, we obtain Example~\ref{ex:ex3}. 
\end{example*}

An immediate application of this algorithm is a general framework \cite{Zudilin-2nd} for Ramanujan-type formulas for $1/\pi.$ Consider the Legendre model 
$$
E_u^{\Leg}:\quad y^2=x(1-x)(1-ux).
$$ 
Assume $E_u^{\Leg}$ admits CM and  $c_1\frac{dx}y+c_2\frac{xdx}y$ is an eigenvector of the endomorphism ring with $c_2\neq 0.$ Under a fixed real embedding of $\Q(u)$, if $|u|<1$, $|u/(u-1)|<1,$ and $|4u(1-u)|<1$, then the Legendre relation for periods gives an explicitly computable nonzero algebraic number $\alpha_u$ such that
$$\sum_{k\ge 0}(1+ak)\frac{(\frac12)_k^3}{k!^3}\l^k=\frac{\alpha_u}{\pi} \quad \text{where}  \quad a=\frac{-c_2(1+u)}{u}\quad{\text{and}}\quad \l=4u(1-u).$$ For example, when $u=17-12\sqrt{2}$, we have $c_1=1$ and $c_2=-2\sqrt2-3$ which gives $\l=4u(1-u)=-\frac 18$ and $a=6$.  
In this case $\alpha_u=2\sqrt 2$. Note that the real embedding of $c_2$ should be consistent with the choice of embedding of $\Q(u)$ above.

\subsection{B\"onisch, Duhr, and Maggio's Conjectures}\label{subsec:Bonsich}

In Appendix B of \cite{Bonisch}, the authors present several candidates for magnetic modular forms. Although we are not able to prove our results for small primes, we tackle the example given in Page 2 of the introduction. 

In the notation of Page 2, we have that $C_4(\tau)$ defined by \eqref{eq:C4} is modular on $\Gamma_1(2),$ vanishes at the cusps, and has a pole for $\tau$ such that the Hauptmodul $A(\tau)=-\frac{1}{64}.$ On the other hand, we have that (for example, see Proposition 7.1 of \cite{HMM1})
$$
-64A(\tau)=1-\left(\frac{1+\lambda(\tau)}{1-\lambda(\tau)}\right)^2
$$
and therefore, $C_4(\tau)$ can be viewed as a $\Gamma_1(4)\backslash\Gamma_1(2)$-invariant modular form on $\Gamma_1(4)$ with pole of order $2$ at the $\lambda=-1.$ Moreover, $A=\frac{-1}{64}$ is an elliptic point of order $2$ on $\Gamma_1(2)\backslash\mathfrak{H}.$ By 1.3 of \cite{Sch88},  Lemma~\ref{lem:ladicChar}, and Theorem 1 of \cite{HLLT}, we have that for $p\geq 3,$ the characteristic polynomial of Frobenius is given by
$$
P_p(T) = T - \left(\frac{-1}{p}\right)p^2.
$$
Therefore, the proof of Theorem~\ref{thm:main-ASD} implies that for all $m,s\geq 1,$ we have
$$
p^3\cdot c(mp^{s-1})-p^2\left(\frac{-1}{p}\right)c(mp^s)\equiv 0\pmod{p^{3s+2}}.
$$
Dividing by $p^2$ on both sides, we have
$$
c(mp^s)\equiv p\cdot\left(\frac{-1}{p}\right)c(mp^{s-1})\pmod{p^{3s}}.
$$
By induction, this implies that $\frac{c(n)}{n}$ has globally bounded denominators in $\Z\left[\frac{1}{ 2}\right].$  As a side remark, note that $C_4(2\tau)$ is a scalar multiple of $g(\tau)=\frac{-2}{2\pi i}\frac{\l\theta^4 \l'}{(\l-2)^2}$ in Theorem~\ref {thm:Gamma1(4)Basis}  for weight 4 case with $u=2$. 
{The rest of the examples in Appendix B of \cite{Bonisch} can be checked similarly.

\subsection{Zhang's Conjectures}\label{subsec:PengchengConj}

In \cite{Pengcheng}, Zhang puts forth multiple conjectures for meromorphic modular forms of level $1.$ Moreover, using hypergeometric functions and the Borcherds--Shimura lift \cite{Borcherds}, Zhang proves multiple results on magnetic modular forms. Here we discuss which of Zhang's conjectures and theorems follow from our results as well as those which do not. 

Theorem 1.2 of \cite{Pengcheng} follows from Theorem~\ref{thm:main-ASD} by the same argument as in Subsection~\ref{subsec:Bonsich}. For $p\geq k-1,$ Conjecture 2.1 (which Zhang proves for $p\geq 5$) follows from the proof of Theorem~\ref{thm:main-ASD}, again as in Subsection~\ref{subsec:Bonsich}. The authors believe that for $3\leq p\leq k-1,$ the conjecture follows from replacing $\Omega^\bullet_{\mathfrak{u}}$ with another chain complex (see 2.8 (ii) of \cite{Scholl2}). The authors did not verify this.

We note that for the case of supersingular primes $p$ with $p\geq k-1,$ Conjectures 1.4 and 2.3 of \cite{Pengcheng} follow from Lemma~\ref{lem:indDecomp} with $K$ chosen to be the CM field. In this situation, the $F_{k,C}$ are eigenvectors of $F^2$, and the statement follows from noting that the eigenvalues of Frobenius $F^2$ for the elliptic curve are both $p.$ 

For primes $p\geq k-2,$ Zhang's Conjectures 4.11 and 4.13 follow from our Corollary~\ref{cor:ASD-basis}. Note that when an elliptic curve $E$ is defined over $\Q$ and has CM by $\Q(\sqrt{-D})$  where $-D$ is a discriminant with class number, then $E$ is defined over $\Q(\sqrt{-D})$ so we can choose $r=2$. Moreover, the modular forms in Corollary~\ref{cor:ASD-basis} can be taken with coefficients in $\Q.$

Finally, Zhang's Conjecture 4.4 and Theorem 4.7 appear to be stronger than our results as the small primes are included.

\subsection{\texorpdfstring{The noncongruence subgroup $\Gamma_2$}{The noncongruence subgroup Gamma2}}\label{subsec:Gamma_2}

Here we discuss some relevant information on the noncongruence subgroup $\Gamma_2$ and the corresponding example in the introduction.

The group $\Gamma_1(5)$ has $4$ cusps and has genus $0.$  The modular form $t(\tau)=q\prod_{n\ge 1}(1-q^n)^{5\left(\frac n5\right)}$ is a Hauptmodul of $\Gamma_1(5)$ which has a pole at the cusp $0$ and vanishes at the cusp $i\infty.$ The corresponding universal elliptic curve for $\Gamma_1(5)$ is given by 
$$
E_{t}:\quad y^2+(1-t)xy-t y=x^3-t x^2.
$$
Now, consider the modular form defined by the $q$-series expansion
$$
F(\tau):= \frac{1}{2\pi i}\cdot \frac{dt(\tau)}{d\tau}\cdot \sum_{n\ge 0} a_n t(\tau)^n,
$$
where
$\displaystyle a_n=\sum_{k=0}^n \binom{n}{k}^2\binom{n+k}{k}$ are the Ap\'ery numbers. It turns out that $F(\tau)$ is a holomorphic modular form of weight $3$ on $\Gamma_1(5)$ which vanishes at all cusps except at $i\infty.$ This modular form plays an important role in Beukers' proof that $\zeta(2)$ is irrational (see \cite{Beukers-accessory, Zagier-integral}). 

In \cite{ALL}, Atkin, Li, and the second author consider a subgroup $\Gamma_2$ of index $2$ in $\Gamma_1(5)$\footnote{More precisely, they consider the conjugate subgroup $\Gamma^1(5),$ but we use $\Gamma_1(5)$ to simplify notation for our purposes.} whose associated modular curve has genus $0.$ The associated Hauptmodul is given by $t_2=t(\tau)^{\frac{1}{2}}$ and the space of cusp forms of weight $3$ is generated by 
$$
h_2(\tau) = F(\tau)\cdot t_2(\tau)=\sum _{n\ge 1} c(n)q^{n/2}\in\Z\left[\frac{1}{10}\right][[q^{1/2}]].
$$
In their work on making the Atkin and Swinnerton--Dyer congruences obtained in \cite{Scholl2} explicit for this group, the authors compute the associated $\ell$-adic representation and show that for prime $p\geq 7$ and $m,s\ge 1$
$$
c(mp^s)-B_p c(mp^{s-1})+\chi(p) p^2c(mp^{s-2})\equiv 0\pmod {p^{2s}},
$$
where $\gamma_p=1$ and $\chi(\cdot)$ is the Legendre character $\left(\frac{-1}{\cdot}\right)$ and where
$$
\eta(4\tau)^6 = q\prod\limits_{n=1}^\infty(1-q^{4n})^{6}=:\sum\limits_{n\geq 1}B_nq^n.
$$
More precisely, the authors show that the characteristic polynomial in Remark~\ref{rem:computing-P-polynomial} has
$$
\Tr(F_p|\mathcal{W}_{k-2}(\overline{\F_p})) = B_p\ \ \ \text{ and }\ \ \ \det(F_p|\mathcal{W}_{k-2}(\overline{\F_p}))=p^2\cdot\chi(p)
$$
and that for $\iota:u\hookrightarrow X,$ we have
$$
\Tr(F_p|\iota_\ast\iota^\ast\mathcal{G}_{k-2})=p+1-\#E_u(\F_p).
$$

\bibliography{ref}
\bibliographystyle{amsalpha}

\end{document}